\documentclass{amsart}
\usepackage{amssymb}
\usepackage[all]{xy}

\textheight=660pt
\textwidth=485pt
\oddsidemargin=-10pt
\evensidemargin=-10pt
\topmargin=-20pt

\newcommand{\IR}{\mathbb R}

\newcommand{\IN}{\mathbb N}
\newcommand{\w}{\omega}
\newcommand{\U}{\mathcal U}

\newcommand{\V}{\mathcal V}
\newcommand{\C}{\mathcal C}
\newcommand{\A}{\mathcal A}

\newcommand{\Ra}{\Rightarrow}

\newcommand{\e}{\varepsilon}
\newcommand{\diam}{\mathrm{diam}}

\newcommand{\IS}{\mathbb S}

\newtheorem{theorem}{Theorem}[section]
\newtheorem{corollary}[theorem]{Corollary}
\newtheorem{proposition}[theorem]{Proposition}
\newtheorem{problem}[theorem]{Problem}
\newtheorem{lemma}[theorem]{Lemma}
\newtheorem{example}[theorem]{Example}
\theoremstyle{definition}
\newtheorem{definition}[theorem]{Definition}

\title[Piotrowski spaces]{Quasicontinuous functions with values in Piotrowski spaces}
\author{Taras Banakh}
\address{Ivan Franko National University of Lviv (Ukraine) and Jan Kochanowski University in Kielce (Poland)}
\email{t.o.banakh@gmail.com}
\keywords{Quasicontinuous function, minimal usco map, Piotrowski space, Stegall space}
\subjclass{54C08, 54E18, 54E35, 54E52}
\thanks{The author has been partially financed by NCN grant DEC-2012/07/D/ST1/02087.}
\dedicatory{Dedicated to the memory of Zbigniew Piotrowski (1953--2014)}

\begin{document}

\begin{abstract} A topological space $X$ is called {\em Piotrowski} if every quasicontinuous map $f:Z\to X$ from a Baire space $Z$ to $X$ has a continuity point. In this paper we survey known results on Piotrowski spaces and investigate the relation of Piotrowski spaces to strictly fragmentable, Stegall, and game determined  spaces. Also we prove that a Piotrowski Tychonoff space $X$ contains a dense (completely) metrizable Baire subspace if and only if  $X$ is Baire (Choquet).
\end{abstract}
\maketitle

\section{Introduction}

The problem of evaluation of the sets of discontinuity points of quasicontinuous maps is classical in Real Analysis and traces its history back to Volterra, Baire and Kempisty.

Let us recall that a function $f:X\to Y$ between topological spaces is {\em quasicontinuous at a point} $x\in X$ if for any neighborhood $O_x\subset X$ of $x$ and any neighborhood $O_{f(x)}\subset Y$ of $f(x)$ there exists a non-empty open set $U\subset O_x$ such that $f(U)\subset O_{f(x)}$. A function $f:X\to Y$ is {\em quasicontinuous} if it is quasicontinuous at each point $x\in X$. Formally, quasicontinuous functions were introduced by Kempisty \cite{Kemp} but implicitly they appeared earlier in works of Baire and Volterra.

It is well-known (see \cite{Bled}, \cite{Lev63}, \cite[3.1.1]{Neu}, \cite{Piot85}, \cite{Piot90}) that every quasicontinuous function $f:X\to Y$ from a non-empty Baire space $X$ to a metrizable space $Y$ has a continuity point. More precisely, the set $C(f)$ of continuity points of $f$ is dense $G_\delta$ in $X$.  We recall that a topological space $X$ is {\em Baire} if for any sequence $(U_n)_{n\in\w}$ of open dense subsets in $X$ the intersection $\bigcap_{n\in\w}U_n$ is dense in $X$.

In \cite{Piot98} Piotrowski suggested to study spaces $Y$ for which every quasicontinuous function $f:X\to Y$ defined on a non-empty Baire space $X$ has a continuity point. Honoring the contribution of Zbigniew Piotrowski to studying quasicontinuous maps, we suggest to call such spaces $Y$ {\em Piotrowski} spaces. The mentioned problem of Piotrowski was considered in the seminal paper \cite{KKM} of Kenderov, Kortezov and Moors, containing many interesting and non-trivial results obtained with the help of topological games. It turns out that Piotrowski spaces are tightly connected with fragmentable and Stegall spaces, well-known spaces in  General Topology and its applications to Functional Analysis, see \cite{Fab}, \cite{Gil82}, \cite{Ph89}, \cite{St91}.

 In this paper we survey known results on Piotrowski spaces and also prove some new results. In particular, in Section~\ref{s:DM} we shall prove that a Piotrowski Tychonoff space $X$ contains a dense Baire (completely) metrizable subspace if and only $X$ is Baire (Choquet). In Section~\ref{s:stabP} we establish some stability properties of the class of Piotrowski spaces.
\smallskip

{\bf Acknowledgement.} The author would like to thank the anonymous referee of an initial version of this paper for valuable remarks, suggestions, and references, which resulted in splitting the initial paper into two parts. This paper is the first part of the splitted paper; the second part \cite{B-M} is devoted to Maslyuchenko spaces and uses the results of this paper (devoted to Piotrowski spaces).

  \section{$\C$-Piotrowski and strongly $\C$-Piotrowski spaces}

It is be convenient to insert a parameter into the definition of a Piotrowski space and define a more general notion.

 \begin{definition}\label{d:piotr} Let $\C$ be a class of Baire spaces. A topological space $X$ is called
\begin{itemize}
\item {\em $\C$-Piotrowski} if every quasicontinuous map $f:C\to X$ defined on a non-empty space $C\in\C$ has a continuity point;
\item {\em strong $\C$-Piotrowski} if for every quasicontinuous map $f:C\to X$ defined on a space $C\in\C$ the set $C(f)$ of continuity points of $f$ is comeager in $C$.
\item ({\em strong}) {\em Piotrowski} if $X$ is (strong) $\C$-Piotrowski for the class $\C$ of all Baire topological spaces.
\end{itemize}
\end{definition}

We recall that a subset $M$ of a topological space $X$ is called {\em meager} if $M$ can be written as the countable union of nowhere dense subsets of $X$. A subset $C\subset X$ is called {\em comeager} in $X$ if its complement $X\setminus C$ is meager in $X$.

It is clear that each strong $\C$-Piotrowski space is $\C$-Piotrowski.
If the class $\C$ is closed under taking open subspaces and dense Baire subspaces, then for regular topological spaces the converse implication is also true.

We shall say that a class $\C$ of topological spaces is closed under taking
\begin{itemize}
\item {\em open subspaces} if for every space $C\in\C$ each open subspace of $C$ belongs to the class $\C$;
\item {\em dense $G_\delta$-subsets} if for every space $C\in\C$ each dense $G_\delta$-subset of $C$ belongs to the class $\C$;
\item {\em dense Baire subspaces} if for every space $C\in\C$ each dense Baire subspace of $C$ belongs to the class $\C$.
\end{itemize}

We shall need to following (probably) known heredity property of quasicontinuous maps.

\begin{lemma}\label{l:Prest} Let $f:X\to Y$ be a quasicontinuous map from a topological space $X$ to a regular topological space $Y$. For every dense subset $Z\subset X$ we get $C(f|Z)=C(f)\cap Z$.
\end{lemma}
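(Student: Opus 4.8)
The plan is to prove the two inclusions $C(f)\cap Z\subseteq C(f|Z)$ and $C(f|Z)\subseteq C(f)\cap Z$ separately, noting first that both sides are subsets of $Z$. The inclusion $C(f)\cap Z\subseteq C(f|Z)$ requires nothing beyond the hypotheses: if $f$ is continuous at a point $z\in Z$, then for any neighborhood $O_{f(z)}$ of $f(z)$ there is a neighborhood $O_z$ of $z$ in $X$ with $f(O_z)\subseteq O_{f(z)}$, and then $O_z\cap Z$ witnesses the continuity of $f|Z$ at $z$. This is the routine direction.

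For the nontrivial inclusion $C(f|Z)\subseteq C(f)\cap Z$, I would fix a point $z\in C(f|Z)$ and an arbitrary neighborhood $O_{f(z)}$ of $f(z)$ in $Y$, and aim to produce an open neighborhood of $z$ in $X$ whose $f$-image lies in $O_{f(z)}$. Using the regularity of $Y$, first choose an open neighborhood $V\ni f(z)$ with $\cl(V)\subseteq O_{f(z)}$. Continuity of $f|Z$ at $z$ then yields an open neighborhood $O_z$ of $z$ in $X$ with $f(O_z\cap Z)\subseteq V$. The claim is that in fact $f(O_z)\subseteq\cl(V)$, which finishes the proof since $\cl(V)\subseteq O_{f(z)}$ and $O_z$ is then the desired neighborhood.

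To verify the claim I would argue by contradiction. Suppose some $x\in O_z$ has $f(x)\notin\cl(V)$. Then $Y\setminus\cl(V)$ is an open neighborhood of $f(x)$, and quasicontinuity of $f$ at $x$, applied to the neighborhood $O_z$ of $x$ and to $Y\setminus\cl(V)$, produces a nonempty open set $U\subseteq O_z$ with $f(U)\subseteq Y\setminus\cl(V)$. Here density of $Z$ enters: $U\cap Z\neq\emptyset$, so picking $w\in U\cap Z\subseteq O_z\cap Z$ gives simultaneously $f(w)\in V\subseteq\cl(V)$ and $f(w)\in Y\setminus\cl(V)$, a contradiction.

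The main obstacle is precisely this hard inclusion, and the delicate point is the interplay of the three hypotheses within it: regularity lets us pass from the ``test'' constraint $V$ on $Z$ to its closure on all of $X$; quasicontinuity converts a single hypothetical bad value $f(x)$ into a whole open set $U$ of bad values; and density guarantees that $U$ meets $Z$, producing the collision with the constraint $f(O_z\cap Z)\subseteq V$. Since none of the three can be dropped, the final write-up should make explicit at which step each of them is used.
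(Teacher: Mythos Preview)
Your proof is correct and follows essentially the same approach as the paper's: for the nontrivial inclusion $C(f|Z)\subseteq C(f)\cap Z$, you use regularity of $Y$ to shrink the target neighborhood to one with closure inside it, then use quasicontinuity and density to derive a contradiction from any point of $O_z$ whose image escapes that closure. The paper omits the trivial inclusion $C(f)\cap Z\subseteq C(f|Z)$, which you include explicitly; otherwise the arguments are the same up to notation.
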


\begin{proof} Given a continuity point $z\in Z$ of the restriction $f|Z$, we should prove that $z$ remains a continuity points of the function $f$.
Given any neighborhood $O_{f(z)}\subset X$ we should find a neighborhood $U_z\subset Z$ of $z$ such that $f(U_z)\subset O_{f(z)}$. By the regularity of $X$, the point $f(z)$ has an open neighborhood $W\subset X$ such that $\overline{W}\subset O_{f(z)}$.

By the continuity of the restriction $f|Z$ at $z$, there exists a neighborhood $V_z\subset X$ of $z$ such that $f(V_z\cap Z)\subset W$. We claim that $f(V_z)\subset \overline{W}\subset O_{f(z)}$. To derive a contradiction, assume that $f(v)\notin\overline{W}$ for some point $v\in V_z$. By the quaiscontinuity of $f$, there exists a non-empty open set $V'\subset V_z$ such that $f(V')\subset X\setminus\overline{W}$. Since $V_z\cap Z$ is dense in $V_z$, there exists a point $v\in V'\cap Z$. For this point we get $f(v)\in f(V_z\cap Z)\cap f(V_z)\subset W\cap(X\setminus\overline{W})=\emptyset$, which is a desired contradiction. This contradiction shows that $f(V_z)\subset \overline{W}\subset O_{f(z)}$ and $f$ is continuous at $z$.
\end{proof}

The following proposition is a modification of Proposition 1 of \cite{K99} and can be easily derived from Lemma~\ref{l:Prest}.

\begin{proposition}\label{p:Pgood} Let $\C$ be a class of Baire spaces and $f:C\to X$ be a quasicontinuous map from a non-empty space $C\in\C$ to a $\C$-Piotrowski regular space $X$.
\begin{enumerate}
\item If the class $\C$ is closed under open subspaces, then the set $C(f)$ of continuity points of $f$ is dense in $X$.
\item If $\C$ is closed under dense $G_\delta$-subspaces, then the set $C(f)$ is not meager in $X$.
\item If $\C$ is closed under open subspaces and dense $G_\delta$-subspaces, then the set $C(f)$ is a dense Baire subspace of $X$.
\item If $\C$ is closed under open subspaces and dense Baire subspaces, then the set $C(f)$ is comeager in $X$.
\end{enumerate}
\end{proposition}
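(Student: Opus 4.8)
The plan is to prove the four assertions in increasing order of strength, with Lemma~\ref{l:Prest} and the $\C$-Piotrowski hypothesis as the engine, and with each part feeding the next. For (1) I would show that $C(f)$ meets every non-empty open set. Given a non-empty open $V\subseteq C$, the subspace $V$ belongs to $\C$ (closure under open subspaces) and is a non-empty Baire space, and $f|V:V\to X$ is quasicontinuous, so the $\C$-Piotrowski property of $X$ yields a continuity point $v$ of $f|V$; since $V$ is open, $v$ is a continuity point of $f$ as well, so $v\in C(f)\cap V$. As $V$ is arbitrary, $C(f)$ is dense in $C$.

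For (2) I would argue by contradiction. If $C(f)$ were meager, then writing $C(f)\subseteq\bigcup_{n\in\w}\overline{M_n}$ with each $M_n$ nowhere dense, the set $Z:=\bigcap_{n\in\w}(C\setminus\overline{M_n})$ is a dense $G_\delta$ in the Baire space $C$, hence a non-empty member of $\C$ (closure under dense $G_\delta$-subspaces), and it is disjoint from $C(f)$. The $\C$-Piotrowski property applied to the quasicontinuous restriction $f|Z$ produces a continuity point $z\in Z$ of $f|Z$; but Lemma~\ref{l:Prest} gives $z\in C(f|Z)=C(f)\cap Z=\emptyset$, a contradiction. Hence $C(f)$ is not meager in $C$.

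For (3) density of $C(f)$ comes from (1). For Baireness I would first record the elementary fact that for a dense subspace $D\subseteq C$ a set $A\subseteq D$ is nowhere dense (equivalently meager) in $D$ if and only if it is nowhere dense (meager) in $C$; thus it suffices to show that every non-empty relatively open set $C(f)\cap V$ with $V\subseteq C$ open is non-meager in $C$. But $C(f|V)=C(f)\cap V$ because $V$ is open, and applying part (2) to the quasicontinuous map $f|V:V\to X$ on the non-empty Baire space $V\in\C$ shows exactly that $C(f)\cap V$ is non-meager in $V$, hence in $C$. Therefore no non-empty open subset of $C(f)$ is meager in itself, i.e. $C(f)$ is a dense Baire subspace of $C$.

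For (4) I would first note that closure under dense Baire subspaces implies closure under dense $G_\delta$-subspaces (a dense $G_\delta$ in a Baire space is a dense Baire subspace), so (1)--(3) apply and $C(f)$ is a dense Baire subspace of $C$. It remains to upgrade ``non-meager'' to ``comeager'', i.e. to show $M:=C\setminus C(f)$ is meager; assume not. By the Banach Category Theorem the open set $P:=\bigcup\{W\subseteq C\ \text{open}:M\cap W\ \text{meager}\}$ satisfies that $M\cap P$ is meager, so the non-meagerness of $M$ forces $V_0:=C\setminus\overline P$ to be non-empty, and by construction $M\cap W$ is non-meager for every non-empty open $W\subseteq V_0$. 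Then $S:=V_0\setminus C(f)=M\cap V_0$ is dense in $V_0$ and, by the absoluteness of meagerness for dense subspaces, every non-empty relatively open subset of $S$ is non-meager in $S$; hence $S$ is a dense Baire subspace of the open set $V_0\in\C$, so $S\in\C$. Applying the $\C$-Piotrowski property to $f|S$ gives a continuity point of $f|S$, while Lemma~\ref{l:Prest} (with $S$ dense in $V_0$) yields $C(f|S)=C(f|V_0)\cap S=(C(f)\cap V_0)\cap S=\emptyset$, a contradiction; thus $M$ is meager and $C(f)$ is comeager. The main obstacle is precisely this last step: the conclusions of (2)--(3) only give that $C(f)$ is ``everywhere non-meager'', which is strictly weaker than comeager, and bridging the gap genuinely requires the dense-Baire closure of $\C$ together with the extraction, via the Banach Category Theorem, of an open set $V_0$ on which $M$ is non-meager in every open subpiece, so that $M\cap V_0$ is itself a Baire space feedable into the $\C$-Piotrowski hypothesis.
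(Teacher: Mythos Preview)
Your proof is correct and is precisely the kind of argument the paper has in mind: the paper gives no detailed proof of Proposition~\ref{p:Pgood}, merely stating that it ``can be easily derived from Lemma~\ref{l:Prest}'' and is a modification of a result of Kalenda. Your four-step argument, using Lemma~\ref{l:Prest} to transfer continuity points between a map and its restriction to a dense subspace and invoking the Banach Category Theorem for the comeager upgrade in (4), is exactly the intended derivation. (Note also that you have silently corrected the paper's evident typo: the conclusions should read ``in $C$'' rather than ``in $X$''.)
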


The last statement of this proposition implies the following characterization.

\begin{corollary}\label{c:P<=>sP} If a class $\C$ of Baire spaces is closed under open subspaces and dense Baire subspaces, then a (regular) topological space $X$ is $\C$-Piotrowski if (and only if) $X$ is strong $\C$-Piotrowski.
\end{corollary}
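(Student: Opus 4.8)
The plan is to treat the biconditional as two separate implications and to observe that only one of them carries genuine content, the other being the general fact already flagged as ``clear'' just before the statement.

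First I would dispatch the ``if'' part, that strong $\C$-Piotrowski implies $\C$-Piotrowski. This direction needs neither regularity of $X$ nor any closure hypothesis on $\C$, which is exactly why the words ``regular'' and ``only if'' are parenthesized. Given any quasicontinuous $f:C\to X$ with $C\in\C$ non-empty, the strong property supplies that $C(f)$ is comeager in $C$; since $C$ is a non-empty Baire space, a comeager subset is dense and in particular non-empty, so $f$ has a continuity point, and $X$ is $\C$-Piotrowski.

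For the converse ``only if'', which is where regularity and the two closure hypotheses enter, my intention is simply to read it off Proposition~\ref{p:Pgood}. Assume $X$ is regular and $\C$-Piotrowski and that $\C$ is closed under open subspaces and dense Baire subspaces, and fix any quasicontinuous $f:C\to X$ with $C\in\C$. If $C=\emptyset$ the desired conclusion is vacuous; if $C\neq\emptyset$, then item (4) of Proposition~\ref{p:Pgood} applies verbatim and yields that $C(f)$ is comeager in $C$. Since $f$ was arbitrary, $X$ is strong $\C$-Piotrowski by definition.

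Consequently I do not expect any obstacle isolated to this corollary itself: the substantive work has already been carried out in establishing Proposition~\ref{p:Pgood}(4), which in turn rests on the heredity Lemma~\ref{l:Prest}. If one insisted on a self-contained argument, the only point meriting care would be verifying that the closure of $\C$ under open subspaces and under dense Baire subspaces is precisely what upgrades the bare existence of one continuity point to comeagerness, by localizing to arbitrary open subsets of $C$ and then restricting along the dense Baire subspace $C(f)$; but as Proposition~\ref{p:Pgood}(4) packages exactly this passage, the corollary follows immediately.
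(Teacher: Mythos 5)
Your proposal is correct and follows the paper's own route: the paper states the corollary as an immediate consequence of Proposition~\ref{p:Pgood}(4) (with the converse direction being the trivially ``clear'' implication noted just after Definition~\ref{d:piotr}), which is exactly how you argue it. Your extra remarks --- that a comeager subset of a non-empty Baire space is non-empty, and that the empty-domain case is vacuous --- are correct fillings-in of details the paper leaves implicit.
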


\begin{corollary} A (regular) topological space $X$ is Piotrowski if (and only if) $X$ is strong Piotrowski.
\end{corollary}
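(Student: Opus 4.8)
The plan is to derive this corollary directly from Corollary~\ref{c:P<=>sP} by taking $\C$ to be the class of all Baire topological spaces. By Definition~\ref{d:piotr}, a space $X$ is (strong) Piotrowski precisely when it is (strong) $\C$-Piotrowski for this particular class $\C$, so the statement will follow immediately once I verify that this $\C$ satisfies the two closure hypotheses required by Corollary~\ref{c:P<=>sP}: closure under open subspaces and under dense Baire subspaces.

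The closure under dense Baire subspaces is automatic for this choice of $\C$. Indeed, given any $C\in\C$, a \emph{dense Baire subspace} of $C$ is by its very name a subspace that is Baire; since $\C$ consists of \emph{all} Baire topological spaces, such a subspace trivially belongs to $\C$. So here the closure condition carries no content beyond the definitions.

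For the closure under open subspaces I would invoke the standard fact that every nonempty open subspace of a Baire space is again a Baire space. This is where all the (very modest) work lies: one checks that if $U$ is open in a Baire space $C$ and $(V_n)_{n\in\w}$ is a sequence of dense open subsets of $U$, then each $V_n\cup(C\setminus\overline{U})$ is dense and open in $C$, so their intersection is dense in $C$, and tracing this back shows $\bigcap_{n\in\w}V_n$ is dense in $U$. Thus $U$ is Baire and $\C$ is closed under open subspaces.

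Having verified both closure properties, I would simply apply Corollary~\ref{c:P<=>sP} to the class $\C$ of all Baire spaces to conclude that a (regular) space $X$ is $\C$-Piotrowski if (and only if) it is strong $\C$-Piotrowski, which is exactly the assertion that $X$ is Piotrowski if (and only if) it is strong Piotrowski. There is no genuine obstacle here; the corollary is a specialization of the preceding one, and the only external ingredient is the elementary heredity of the Baire property to open subspaces.
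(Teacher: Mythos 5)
Your proposal is correct and matches the paper's intended derivation: the corollary is exactly the specialization of Corollary~\ref{c:P<=>sP} to the class $\C$ of all Baire spaces, and the two closure hypotheses hold for the reasons you give (open subspaces of Baire spaces are Baire, and closure under dense Baire subspaces is tautological for this $\C$). Nothing further is needed.
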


\section{$\C$-Stegall and strong $\C$-Stegall spaces}

Piotrowski spaces are tightly related to Stegall spaces, introduced by Stegall \cite{St83}, \cite{St91} and extensively studied in Functional Analysis, see \cite{Fab}, \cite{HH}, \cite{Gil82}, \cite{Ph89} and references therein. Stegall spaces are defined with the help of minimal usco maps.

A multi-valued map $\Phi:X\multimap Y$ between topological spaces will be called an {\em usco map} if for every $x\in X$ the set $\Phi(x)\subset Y$ is non-empty compact, and $\Phi$ is {\em upper semicontinuous} in the sense that for every open set $U\subset Y$ the set $\{x\in X:\Phi(x)\subset U\}$ is open in $X$. An usco map $\Phi:X\multimap Y$ is called {\em minimal} if $\Phi=\Psi$ for any usco map $\Psi:X\multimap Y$ with $\Psi(x)\subset\Phi(x)$ for all $x\in X$.

For a multivalued map $\Phi:Z\multimap X$ and a subset $V\subset Z$ put $\Phi(V):=\bigcup_{v\in V}\Phi(v)\subset X$.

The following theorem yields an important example of a minimal usco map. In this theorem for a compact space $K$ by $C(K)$ we denote the Banach space of all continuous real-valued functions on $K$, endowed with the sup-norm $\|f\|=\sup_{x\in K}|f(x)|$.

\begin{lemma}\label{l:CK-musco} For every compact Hausdorff space $K$ the multi-valued map $$\Phi:C(K)\multimap K,\;\;\Phi(f):=\{x\in K:f(x)=\max f(K)\},$$
is minimal usco. Moreover, for every open set $U\subset C(K)$ the set $\Phi(U)$ is open in $K$.
\end{lemma}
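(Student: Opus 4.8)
The plan is to verify the three claims in turn: that $\Phi$ is an usco map, that it is minimal, and that $\Phi(U)$ is open for every open $U \subset C(K)$. Throughout, write $\max f(K) = \sup_{x \in K} f(x)$, which is attained since $K$ is compact and $f$ is continuous, so $\Phi(f)$ is always non-empty; moreover $\Phi(f) = f^{-1}(\max f(K))$ is a closed subset of the compact space $K$, hence compact. This handles the non-emptiness and compactness of the values immediately.

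For upper semicontinuity, I would fix an open $U \subset K$ and a function $f$ with $\Phi(f) \subset U$, and look for a norm-neighborhood of $f$ whose members $g$ also satisfy $\Phi(g) \subset U$. The key quantitative idea is that off the set $\Phi(f)$ the function $f$ stays bounded away from its maximum on the compact complement $K \setminus U$: set $m = \max f(K)$ and $m' = \max f(K \setminus U) < m$ (with the convention $m' = -\infty$ if $K \setminus U = \emptyset$), and let $\e = (m - m')/2 > 0$. If $\|g - f\| < \e/2$ say, then for $x \in K \setminus U$ the value $g(x)$ is still strictly below the values of $g$ near a maximizer of $f$ in $U$, so $\Phi(g)$ cannot meet $K \setminus U$, i.e. $\Phi(g) \subset U$. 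Making this estimate precise shows $\{f : \Phi(f) \subset U\}$ is open, giving upper semicontinuity.

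For the ``Moreover'' clause and for minimality, the essential observation is that one can perturb $f$ to concentrate its maximum near any prescribed point of $\Phi(f)$. Concretely, given an open $W \subset K$ meeting $\Phi(f)$, choose a point $x_0 \in W \cap \Phi(f)$ and a continuous bump function $h : K \to [0,1]$ (using Urysohn's lemma, valid since $K$ is compact Hausdorff hence normal) that is positive at $x_0$ and supported inside $W$; then for small $t > 0$ the function $f + t h$ has its maximum attained only inside $W$, so $\Phi(f + th) \subset W$. This is the mechanism behind both remaining claims. For the ``Moreover'' clause: given open $U \subset C(K)$ and a point $y \in \Phi(U)$, pick $f \in U$ with $y \in \Phi(f)$; to show a whole neighborhood of $y$ lies in $\Phi(U)$, I would use bump functions supported in small neighborhoods of $y$ to produce, for each nearby point $y'$, a function $g$ close to $f$ (hence still in $U$) with $y' \in \Phi(g)$, exhibiting $y'$ in $\Phi(U)$; some care is needed to cover an open set of target points uniformly.

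For minimality I would argue by contradiction: suppose $\Psi$ is an usco map with $\Psi(f) \subsetneq \Phi(f)$ for some $f$, pick $y \in \Phi(f) \setminus \Psi(f)$, and separate the compact set $\Psi(f)$ from $y$ by disjoint open sets, so $\Psi(f) \subset V$ with $y \notin \overline{V}$. By upper semicontinuity of $\Psi$ there is a norm-neighborhood $O_f$ with $\Psi(O_f) \subset V$. Now the bump-function perturbation applied at $y$ produces some $g \in O_f$ whose maximum is attained only near $y$, forcing $\Phi(g)$ into a small neighborhood of $y$ disjoint from $V$; since $\emptyset \neq \Psi(g) \subset \Phi(g)$, this contradicts $\Psi(g) \subset V$. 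I expect the main obstacle to be the bump-function construction: one must engineer the perturbation so that the maximum of the perturbed function is attained \emph{only} at points as close to $y$ as desired, which requires combining the sup-norm smallness of the perturbation with the strict gap between the maximum value and the values away from the chosen neighborhood. Getting those two estimates to interlock correctly is the delicate computational heart of the proof, whereas the topological skeleton (compactness, upper semicontinuity, separation) is routine.
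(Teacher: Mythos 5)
The paper gives no proof of this lemma at all: it simply refers the reader to Lemma 2.2.1 and Theorem 3.1.6 of \cite{Fab}, so there is nothing in the text to compare your argument against step by step. Judged on its own, your sketch reconstructs the standard argument and most of it is sound: the values $\Phi(f)=f^{-1}(\max f(K))$ are indeed non-empty and compact; your upper semicontinuity estimate via $m'=\max f(K\setminus U)<m$ is correct; and your minimality argument works exactly as you describe, precisely because there the bump is placed at a point $y$ that already lies in $\Phi(f)$: if $h\ge 0$ is supported in a neighborhood $N$ of $y$ with $\overline{N}\cap V=\emptyset$ and $h(y)=1$, then $(f+th)(y)=\max f(K)+t$ exceeds $(f+th)(x)=f(x)$ for every $x\notin N$, so $\Phi(f+th)\subset N$ is disjoint from $V$, while $\emptyset\ne\Psi(f+th)\subset\Phi(f+th)\cap V$ gives the contradiction.

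The one genuine gap is in the ``Moreover'' clause, and it is not merely a matter of making two estimates interlock: the additive perturbation $f+th$ cannot do what you need there. A nearby target point $y'$ generally satisfies $f(y')<m:=\max f(K)$, and if $0\le h\le 1=h(y')$ then $(f+th)(y')=f(y')+t$ while $f+th$ can reach $m+t$ elsewhere on $\{h=1\}\cap\Phi(f)$ or come arbitrarily close to it, so $y'$ need not become a maximizer for any choice of $t$ and of the support of $h$. The standard fix is to modulate the height of the bump by the gap to the maximum. Given $\delta>0$ with the ball $B(f,2\delta)$ contained in $U$, put $W=\{x\in K:f(x)>m-\delta\}$, an open set containing $\Phi(f)$ and hence your point $y$. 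For $y'\in W$ take a Urysohn function $u:K\to[0,1]$ with $u(y')=1$ and $u=0$ off $W$, and set $g=f+u\cdot(m+\delta'-f)=(1-u)f+u(m+\delta')$ for a small $\delta'\in(0,\delta)$. Then $g-(m+\delta')=(1-u)(f-m-\delta')\le 0$ with equality exactly on $u^{-1}(1)$, so $y'\in\Phi(g)$; and $\|g-f\|=\sup_W u\cdot(m+\delta'-f)<\delta+\delta'<2\delta$ because $m-f<\delta$ on $W$, so $g\in U$ and therefore $W\subset\Phi(U)$. With this replacement (which also yields minimality directly via Lemma~\ref{l:musco}, if you prefer) your proposal becomes a complete proof.
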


The proof of this lemma can be found in Lemma 2.2.1 and Theorem 3.1.6 of \cite{Fab}.

We shall often use the following (known) characterization of minimal usco maps (see \cite[3.1.2]{Fab}).

\begin{lemma}\label{l:musco} An usco map $\Phi:Z\multimap X$ from a topological space $Z$ to a (Hausdorff) topological space $X$ is   minimal usco (if and) only if for any open sets $V\subset Z$, $W\subset X$ with $W\cap\Phi(V)\ne\emptyset$ there exists a non-empty open set $U\subset V$ such that $\Phi(U)\subset W$.
\end{lemma}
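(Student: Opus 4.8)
The plan is to prove the two implications separately, noting that only the sufficiency (the ``if'' part) will use the Hausdorff property of $X$, whereas the necessity holds for an arbitrary target space; this matches the parentheses in the statement.

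For the necessity, I assume $\Phi$ is minimal usco and argue by contradiction: fix open sets $V\subset Z$ and $W\subset X$ with $W\cap\Phi(V)\ne\emptyset$ but such that $\Phi(U)\not\subset W$ for every non-empty open $U\subset V$. The first fact I would record is that then $\Phi(z)\not\subset W$ for \emph{every} $z\in V$; otherwise the set $\{z:\Phi(z)\subset W\}$, open by upper semicontinuity, would meet $V$ in a non-empty open $U\subset V$ with $\Phi(U)\subset W$, contradicting the choice of $V,W$. Hence $\Phi(z)\setminus W=\Phi(z)\cap(X\setminus W)$ is non-empty and compact (being closed in the compact set $\Phi(z)$) for each $z\in V$, which lets me define a competitor
$$\Psi(z)=\begin{cases}\Phi(z)\setminus W,& z\in V,\\ \Phi(z),& z\notin V.\end{cases}$$
This $\Psi$ has non-empty compact values, satisfies $\Psi(z)\subset\Phi(z)$ everywhere, and is strictly smaller than $\Phi$ at any $v\in V$ witnessing $W\cap\Phi(V)\ne\emptyset$, so $\Psi\ne\Phi$.

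The crux of this direction, and the step I expect to be the main obstacle, is verifying that $\Psi$ is again usco, because the definition of $\Psi$ switches across the boundary of $V$. To check upper semicontinuity I would fix an open $O\subset X$ and a point $z_0$ with $\Psi(z_0)\subset O$ and split into two cases. If $z_0\notin V$, then $\Phi(z_0)=\Psi(z_0)\subset O$, and upper semicontinuity of $\Phi$ gives an open $N\ni z_0$ with $\Phi(N)\subset O$; since $\Psi\subset\Phi$ always, $\Psi(N)\subset O$. If $z_0\in V$, then $\Phi(z_0)\subset O\cup W$, so upper semicontinuity of $\Phi$ yields an open $N'\ni z_0$ with $\Phi(N')\subset O\cup W$, and on the open neighborhood $N'\cap V$ one has $\Psi(z)=\Phi(z)\setminus W\subset O$. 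In both cases $z_0$ is interior to $\{z:\Psi(z)\subset O\}$, so $\Psi$ is usco, contradicting minimality of $\Phi$.

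For the sufficiency I assume the stated property and suppose, for contradiction, that some usco $\Psi$ with $\Psi(z)\subset\Phi(z)$ for all $z$ satisfies $\Psi(z_0)\subsetneq\Phi(z_0)$; pick $y_0\in\Phi(z_0)\setminus\Psi(z_0)$. Here I would invoke the Hausdorff hypothesis: since $\Psi(z_0)$ is compact and $y_0\notin\Psi(z_0)$, the point and the compact set can be separated by disjoint open sets $W\ni y_0$ and $O\supset\Psi(z_0)$. Upper semicontinuity of $\Psi$ makes $V_0:=\{z:\Psi(z)\subset O\}$ an open neighborhood of $z_0$, and $y_0\in\Phi(z_0)\subset\Phi(V_0)$ gives $W\cap\Phi(V_0)\ne\emptyset$. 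Applying the hypothesis to $V_0$ and $W$ produces a non-empty open $U\subset V_0$ with $\Phi(U)\subset W$; then any $u\in U$ satisfies $\emptyset\ne\Psi(u)\subset\Phi(u)\subset W$ while $u\in V_0$ forces $\Psi(u)\subset O$, so $\Psi(u)\subset W\cap O=\emptyset$, a contradiction. Therefore $\Psi=\Phi$ and $\Phi$ is minimal usco.
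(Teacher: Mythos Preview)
Your proof is correct in both directions, and the split you make---necessity without Hausdorffness, sufficiency using it to separate a point from a compact set---is exactly the standard argument. The paper itself does not prove this lemma; it simply cites \cite[3.1.2]{Fab} as a known characterization, so there is nothing to compare against beyond noting that your argument is the canonical one found in that reference.
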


The ``only if'' part of this characterization implies the following lemma connecting quasicontinuous and minimal usco maps (cf. \cite[Corollary 2]{KKM} and \cite[3.4]{HH}).

\begin{lemma}\label{l:select} For any minimal usco map $\Phi:Z\multimap X$ between  topological  spaces, every selection $\varphi:Z\to X$ of $\Phi$ is quasicontinuous.
\end{lemma}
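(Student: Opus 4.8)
The plan is to verify quasicontinuity of the selection $\varphi$ at an arbitrary point $z\in Z$ directly from the ``only if'' part of the characterization of minimal usco maps in Lemma~\ref{l:musco}. So I would fix $z\in Z$ together with a neighborhood $O_z\subset Z$ of $z$ and a neighborhood $O_{\varphi(z)}\subset X$ of $\varphi(z)$, and aim to produce a non-empty open set $U\subset O_z$ with $\varphi(U)\subset O_{\varphi(z)}$. Replacing the two neighborhoods by smaller open ones, I may assume that both $O_z$ and $O_{\varphi(z)}$ are open: shrinking $O_z$ only narrows where $U$ is allowed to live, and shrinking $O_{\varphi(z)}$ only strengthens the inclusion we must establish, so a solution for the smaller open neighborhoods solves the original problem.

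Next I would check that the hypotheses of Lemma~\ref{l:musco} are satisfied for the open sets $V:=O_z$ and $W:=O_{\varphi(z)}$. Since $\varphi$ is a selection of $\Phi$, we have $\varphi(z)\in\Phi(z)$, and as $z\in O_z$ this gives $\varphi(z)\in\Phi(z)\subset\Phi(O_z)=\Phi(V)$. Combined with $\varphi(z)\in O_{\varphi(z)}=W$, this yields $\varphi(z)\in W\cap\Phi(V)$, so the intersection $W\cap\Phi(V)$ is non-empty.

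Applying the ``only if'' part of Lemma~\ref{l:musco} to these $V$ and $W$, I obtain a non-empty open set $U\subset V=O_z$ with $\Phi(U)\subset W=O_{\varphi(z)}$. Finally, because $\varphi(u)\in\Phi(u)\subset\Phi(U)$ for every $u\in U$, it follows that $\varphi(U)\subset\Phi(U)\subset O_{\varphi(z)}$, which is precisely the condition witnessing quasicontinuity of $\varphi$ at $z$. As $z\in Z$ was arbitrary, $\varphi$ is quasicontinuous.

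I do not expect any genuine obstacle in this argument: the lemma is an essentially immediate consequence of the minimality characterization, with the key move being the single invocation of Lemma~\ref{l:musco}. The only steps demanding a little care are the harmless reduction to open neighborhoods and the inclusion $\varphi(U)\subset\Phi(U)$, which holds exactly because $\varphi$ takes values inside $\Phi$.
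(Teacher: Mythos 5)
Your proof is correct and follows exactly the route the paper indicates: the paper gives no separate argument for this lemma, merely noting that it follows from the ``only if'' part of Lemma~\ref{l:musco}, and your application of that part with $V=O_z$, $W=O_{\varphi(z)}$ together with the inclusion $\varphi(U)\subset\Phi(U)$ is precisely the intended derivation.
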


A map $\varphi:Z\to X$ is called a {\em selection} of a multivalued map $\Phi:Z\multimap X$ if $\varphi(z)\in\Phi(z)$ for all $z\in Z$.
\smallskip

Lemma~\ref{l:musco} implies also the following useful fact:

\begin{lemma}\label{l:musco-rest} Let $\Phi:Z\multimap X$ be a minimal usco map from a topological space $Z$ to a (Hausdorff) topological space $X$, and $Y$ be an open (or dense) subspace of $Z$. Then the restriction $\Phi|Y:Y\multimap X$ is a minimal usco map.
\end{lemma}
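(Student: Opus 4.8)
The plan is to verify that $\Phi|Y$ is an usco map and then deduce its minimality from the open-set characterization in Lemma~\ref{l:musco}. The usco part requires no assumption on $Y$: for each $y\in Y$ the value $(\Phi|Y)(y)=\Phi(y)$ is a non-empty compact subset of $X$, and for every open $U\subset X$ the set $\{y\in Y:\Phi(y)\subset U\}$ equals $\{z\in Z:\Phi(z)\subset U\}\cap Y$, which is relatively open in $Y$ because $\Phi$ is upper semicontinuous on $Z$. Hence $\Phi|Y$ is usco regardless of whether $Y$ is open or dense.

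To prove minimality I would invoke the ``if'' part of Lemma~\ref{l:musco} (which is where the Hausdorffness of $X$ is used): it suffices to show that for any open sets $V\subset Y$ and $W\subset X$ with $W\cap\Phi(V)\ne\emptyset$ there is a non-empty open $U\subset V$ in $Y$ with $\Phi(U)\subset W$. Write $V=V'\cap Y$ for some open $V'\subset Z$. Since $V\subset V'$ implies $\Phi(V)\subset\Phi(V')$, the hypothesis gives $W\cap\Phi(V')\ne\emptyset$, so the ``only if'' part of Lemma~\ref{l:musco} applied to the minimal usco map $\Phi$ produces a non-empty open $U'\subset V'$ in $Z$ with $\Phi(U')\subset W$.

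It remains to pass back down to $Y$, and this is where the two cases diverge and where the only genuine subtlety lies. If $Y$ is open in $Z$, then $V$ is already open in $Z$, so I may take $V'=V$ and set $U:=U'$, a non-empty open subset of $V$ lying in $Y$ with $\Phi(U)\subset W$. If $Y$ is merely dense, then $U'$ need not meet $Y$ a priori, but density guarantees that the non-empty open set $U'$ satisfies $U'\cap Y\ne\emptyset$; setting $U:=U'\cap Y$ yields a non-empty open subset of $V=V'\cap Y$ with $(\Phi|Y)(U)=\Phi(U)\subset\Phi(U')\subset W$. In either case the open-set criterion is verified, so Lemma~\ref{l:musco} certifies that $\Phi|Y$ is minimal usco. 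The main point to handle carefully is exactly this use of density to keep $U'\cap Y$ non-empty, together with the bookkeeping translating between open subsets of $Y$ and open subsets of $Z$.
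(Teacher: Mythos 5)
Your proof is correct and follows exactly the route the paper intends: the paper gives no written proof of this lemma, stating only that it follows from Lemma~\ref{l:musco}, and your argument supplies precisely that derivation (usco-ness of the restriction directly from the definition, then the open-set criterion of Lemma~\ref{l:musco}, with density used to keep $U'\cap Y$ non-empty). The bookkeeping between relatively open subsets of $Y$ and open subsets of $Z$, and the use of Hausdorffness only in the ``if'' direction of Lemma~\ref{l:musco}, are both handled correctly.
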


\begin{definition}\label{d:Stegall} Let $\C$ be a class of Baire spaces.
A topological space $X$ is defined to be
\begin{itemize}
\item {\em $\C$-Stegall} if every minimal usco map $\Phi:C\multimap X$ is single-valued at some point $z\in C$;
\item {\em strong $\C$-Stegall} if every minimal usco map $\Phi:C\multimap X$ is single-valued at all points of some comeager subset of $C$;
\item ({\em strong}) {\em Stegall} if $X$ is (strong) $\C$-Stegall for the class $\C$ of all Baire topological spaces.
\end{itemize}
\end{definition}

Stegall spaces have found many applications in the geometry of Banach spaces \cite{Fab}, \cite{KM04}, \cite{NP}, \cite{St87}, differentiability theory of convex and Lipschitz functions \cite{BM}, \cite{Fab}, \cite{Gil82}, \cite{Ph89}, \cite{St83}, optimization \cite{CK}, \cite{CKR}.  In \cite{K99} $\C$-Stegall spaces are called Stegall spaces with respect to the class $\C$.

The following proposition was proved in \cite[Proposition 1]{K99} and is a counterpart of Proposition~\ref{p:Pgood}. This proposition can be easily derived from Lemma~\ref{l:musco-rest} and Definition~\ref{d:Stegall}.

\begin{proposition}\label{p:Sgood} Let $\C$ be a class of Baire spaces and $\Phi:C\to X$ be a minimal usco map from a non-empty space $C\in\C$ to a $\C$-Stegall Hausdorff space $X$.
\begin{enumerate}
\item If the class $\C$ is closed under open subspaces, then $\Phi$ is single-valued at all points of some dense subset of $C$.
\item If $\C$ is closed under dense $G_\delta$-subspaces, then
$\Phi$ is single-valued at all points of some non-meager subset of $C$.
\item If $\C$ is closed under open subspaces and dense $G_\delta$-subspaces, then
$\Phi$ is single-valued at all points of some dense Baire subspace of $C$.
\item If $\C$ is closed under open subspaces and dense Baire subspaces, then
$\Phi$ is single-valued at all points of some comeager subset of $C$.
\end{enumerate}
\end{proposition}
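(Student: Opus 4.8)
The plan is to mirror the proof of Proposition~\ref{p:Pgood}, using Lemma~\ref{l:musco-rest} as the engine: the restriction of a minimal usco map to an open or dense subspace is again minimal usco. Write $S(\Phi):=\{z\in C:|\Phi(z)|=1\}$ for the set of points of single-valuedness and note that for any subspace $Y\subset C$ one has $S(\Phi|Y)=S(\Phi)\cap Y$, since $\Phi|Y$ is single-valued at a point exactly when $\Phi$ is. For (1), I would take an arbitrary non-empty open $V\subset C$; as $\C$ is closed under open subspaces, $V\in\C$, and $\Phi|V$ is minimal usco by Lemma~\ref{l:musco-rest}, so the $\C$-Stegall property yields a point $z\in V$ with $\Phi$ single-valued at $z$. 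Hence $S(\Phi)$ meets every non-empty open set and is dense. For (2), I argue by contradiction: if $S(\Phi)$ were meager, then, $C$ being Baire, its complement would contain a dense $G_\delta$-set $G\subset C$ with $G\cap S(\Phi)=\emptyset$; since $\C$ is closed under dense $G_\delta$-subspaces, $G\in\C$ and $\Phi|G$ is minimal usco, so $\C$-Stegallness supplies a point $z\in G$ of single-valuedness, i.e. $z\in G\cap S(\Phi)$, a contradiction.

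For (3), density of $S(\Phi)$ is (1). For the Baire property I would use the elementary fact that, for a \emph{dense} subspace $S\subset C$, a subset $A\subset S$ is nowhere dense in $S$ if and only if it is nowhere dense in $C$, whence $A$ is meager in $S$ iff it is meager in $C$. Consequently $S(\Phi)$ is Baire precisely when $S(\Phi)\cap V$ is non-meager in $C$ for every non-empty open $V\subset C$. Fixing such a $V$ and applying (2) to the minimal usco map $\Phi|V:V\multimap X$ (with $V\in\C$) gives that $S(\Phi|V)=S(\Phi)\cap V$ is non-meager in $V$, hence non-meager in $C$ because $V$ is open. Thus $S(\Phi)$ is a dense Baire subspace of $C$.

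For (4) I first observe that closure under dense Baire subspaces entails closure under dense $G_\delta$-subspaces (a dense $G_\delta$-subset of a Baire space is Baire), so (3) already provides that $D:=S(\Phi)$ is dense Baire. To upgrade ``dense Baire'' to ``comeager'' I would run a Banach-category argument: put $W:=\bigcup\{U\subset C\text{ open}:N:=C\setminus D\text{ is meager in }U\}$; by the Banach category theorem $N$ is meager in the open set $W$, so $D$ is comeager in $W$, and it suffices to show that $W$ is dense, since then the nowhere dense set $C\setminus W$ is meager and $D$ is comeager in $C$. If $W$ were not dense, then on the non-empty open set $V_0:=C\setminus\overline{W}$ the set $N$ would be non-meager in every non-empty open subset; replacing $C$ by $V_0$ (again open, hence in $\C$, with $\Phi|V_0$ minimal usco) reduces everything to the case where $N$ is everywhere non-meager. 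The intended contradiction is then to extract from $N$ a subspace $B$ that is dense in some non-empty open $V'\subset C$ and Baire: for such a $B$ one would have $B\in\C$ by closure under open and dense Baire subspaces, $\Phi|B$ would be minimal usco by Lemma~\ref{l:musco-rest}, and yet $\Phi|B$ would be single-valued at no point since $B\subset N$, contradicting the $\C$-Stegall property.

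The main obstacle will be exactly this last extraction of a dense Baire subspace $B\subset N$. It is the only step that genuinely uses the stronger closure hypothesis of (4) (dense Baire, rather than merely dense $G_\delta$ as in (3)), and it is delicate because the multivalued set $N$ of a minimal usco map into an arbitrary Hausdorff space need not have the Baire property. Indeed, were $N$ Baire-measurable, everywhere non-meagerness would force $N$ to be comeager on some non-empty open set, immediately contradicting (3); lacking that measurability, one must construct the dense Baire subspace $B\subset N$ directly, which is the technical heart of the argument and the point at which I would follow the construction of \cite{K99}.
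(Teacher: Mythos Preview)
Your arguments for (1), (2) and (3) are correct and are precisely what the paper intends: it offers no detailed proof, merely remarking that the proposition ``can be easily derived from Lemma~\ref{l:musco-rest} and Definition~\ref{d:Stegall}'' and citing \cite[Proposition~1]{K99}.

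For (4), your reduction via the Banach category theorem to a non-empty open set $V_0$ on which $N=C\setminus S(\Phi)$ is non-meager in every non-empty open subset is fine, but the ``main obstacle'' you describe is illusory---and you have already proved the fact that dissolves it. After the reduction, $N\cap V_0$ is dense in $V_0$ (non-meager in every open set forces this), so the equivalence you invoked in (3)---that for a \emph{dense} subspace $S$ of a space, a subset of $S$ is meager in $S$ iff it is meager in the ambient space---applies with $S=N\cap V_0$ inside $V_0$. Every non-empty relatively open subset $V\cap N$ of $N\cap V_0$ (with $V\subset V_0$ open) is non-meager in $V_0$ by hypothesis, hence non-meager in $N\cap V_0$. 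Thus $B:=N\cap V_0$ is \emph{itself} a dense Baire subspace of $V_0$; no further extraction or construction from \cite{K99} is needed. Then $V_0\in\C$ by closure under open subspaces, $B\in\C$ by closure under dense Baire subspaces, $\Phi|B$ is minimal usco by two applications of Lemma~\ref{l:musco-rest}, and the $\C$-Stegall property produces a point of single-valuedness in $B\subset N$, the desired contradiction. So your outline for (4) is complete once you notice that the set you already have in hand is the $B$ you are looking for.
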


The last statement of this proposition implies:

\begin{corollary} A topological space $X$ is Stegall if and only if $X$ is strong Stegall.
\end{corollary}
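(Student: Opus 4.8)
The plan is to derive this corollary as an immediate consequence of Proposition~\ref{p:Sgood}(4) applied to the class $\C$ of \emph{all} Baire topological spaces. The key observation is that ``Stegall'' and ``strong Stegall'' are by Definition~\ref{d:Stegall} exactly the $\C$-Stegall and strong $\C$-Stegall properties for this particular $\C$, so the corollary reduces to verifying that Proposition~\ref{p:Sgood}(4) is applicable, i.e.\ that the class of all Baire spaces is closed under open subspaces and under dense Baire subspaces.

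First I would dispose of the trivial implication: every strong $\C$-Stegall space is $\C$-Stegall, since a comeager subset of a nonempty Baire space $C$ is in particular nonempty (a comeager set is dense, hence nonempty when $C\neq\emptyset$), so single-valuedness on a comeager set yields single-valuedness at some point. This gives the ``if'' direction (strong Stegall $\Rightarrow$ Stegall) with no hypotheses needed.

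For the converse I would check the two closure conditions required by Proposition~\ref{p:Sgood}(4). Closure under open subspaces holds because a nonempty open subspace of a Baire space is Baire (this is a standard fact: openness guarantees that dense open subsets of the subspace extend to dense open sets meeting the subspace, so the Baire property is inherited). Closure under dense Baire subspaces is immediate from the very definition of that closure notion, since any dense Baire subspace of a Baire space is by construction again a Baire space. With both conditions verified, Proposition~\ref{p:Sgood}(4) tells us that for a Stegall space $X$, every minimal usco map $\Phi:C\multimap X$ from a nonempty Baire space $C$ is single-valued on a comeager subset of $C$, which is precisely the definition of strong Stegall.

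I expect the main (and only) subtlety to be the verification that the class of all Baire spaces is closed under open subspaces, since the dense-Baire-subspace condition is definitional and the reverse implication is trivial. Even this point is routine and well known, so once the closure properties are noted, the corollary follows by quoting Proposition~\ref{p:Sgood}(4) verbatim. The whole argument is therefore a short bookkeeping exercise tying together Definition~\ref{d:Stegall} and Proposition~\ref{p:Sgood}, with no genuinely new content beyond recognizing that the universal class of Baire spaces satisfies the needed hypotheses.
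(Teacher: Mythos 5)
Your proposal is correct and follows exactly the paper's route: the paper derives the corollary as an immediate consequence of Proposition~\ref{p:Sgood}(4) applied to the class of all Baire spaces, and you simply spell out the (routine) verification that this class is closed under open subspaces and dense Baire subspaces, plus the trivial converse. No discrepancy with the paper's argument.
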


We are going to prove that each (strong) $\C$-Piotrowski Hausdorff space is (strong) $\C$-Stegall.

\begin{theorem}\label{t:P->S} Let $\C$ be a class of Baire topological spaces. Every  (strong) $\C$-Piotrowski Hausdorff space $X$ is (strong) $\C$-Stegall.
\end{theorem}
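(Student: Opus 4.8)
The plan is to reduce the Stegall property to the Piotrowski property by passing from a minimal usco map to one of its selections. Given a minimal usco map $\Phi:C\multimap X$ with $C\in\C$, I would choose (using the Axiom of Choice) a selection $\varphi:C\to X$ with $\varphi(z)\in\Phi(z)$ for all $z\in C$; such a selection exists because each value $\Phi(z)$ is non-empty. By Lemma~\ref{l:select} the selection $\varphi$ is quasicontinuous, so the $\C$-Piotrowski (resp. strong $\C$-Piotrowski) hypothesis applies directly to $\varphi$.

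The crucial step is a pointwise implication: if a selection $\varphi$ of $\Phi$ is continuous at a point $z\in C$, then $\Phi$ is single-valued at $z$, i.e. $\Phi(z)=\{\varphi(z)\}$. I would prove this by contradiction, exploiting the Hausdorffness of $X$ together with the minimality characterization of Lemma~\ref{l:musco}. Suppose $\Phi(z)$ contained a point $y\ne\varphi(z)$. Choosing disjoint open neighborhoods $W\ni\varphi(z)$ and $W'\ni y$ in the Hausdorff space $X$, the continuity of $\varphi$ at $z$ yields an open neighborhood $V\subset C$ of $z$ with $\varphi(V)\subset W$. Since $y\in W'\cap\Phi(z)\subset W'\cap\Phi(V)$, Lemma~\ref{l:musco} provides a non-empty open set $U\subset V$ with $\Phi(U)\subset W'$. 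Then any point $u\in U$ satisfies both $\varphi(u)\in\Phi(u)\subset W'$ and $\varphi(u)\in\varphi(V)\subset W$, contradicting $W\cap W'=\emptyset$. Hence $\Phi(z)=\{\varphi(z)\}$.

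With this pointwise implication in hand, both assertions follow immediately. For the non-strong case: if $X$ is $\C$-Piotrowski and $C\ne\emptyset$, then $\varphi$ has a continuity point $z$, at which $\Phi$ is single-valued, so $X$ is $\C$-Stegall. For the strong case: if $X$ is strong $\C$-Piotrowski, then the set $C(\varphi)$ of continuity points of $\varphi$ is comeager in $C$, and since every point of $C(\varphi)$ is a single-valuedness point of $\Phi$, the map $\Phi$ is single-valued on a comeager subset of $C$, so $X$ is strong $\C$-Stegall.

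I expect the main obstacle to be the verification of the pointwise implication in the second paragraph, specifically the correct invocation of the minimality characterization (Lemma~\ref{l:musco}) to produce the open set $U$ on which $\Phi$ is trapped inside $W'$, and the confirmation that the Hausdorff separation and the continuity estimate combine into the desired contradiction. The remaining reductions, including the existence of a selection and the passage from the pointwise statement to the comeager conclusion, are essentially formal.
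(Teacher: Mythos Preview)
Your proposal is correct and follows essentially the same approach as the paper: pass from $\Phi$ to a selection $\varphi$, apply Lemma~\ref{l:select} to get quasicontinuity, invoke the (strong) $\C$-Piotrowski hypothesis, and then show that continuity of $\varphi$ at a point forces single-valuedness of $\Phi$ there. The only cosmetic difference is in the contradiction step: the paper explicitly constructs a strictly smaller usco map $\Phi'$ (by excising an open set from the values of $\Phi$ on a neighborhood of $z$) to violate minimality, whereas you invoke the ``only if'' direction of Lemma~\ref{l:musco} to produce a non-empty open $U$ with $\Phi(U)\subset W'$ and obtain the contradiction $\varphi(u)\in W\cap W'$; these are two packagings of the same minimality argument.
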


\begin{proof} Given a minimal usco map $\Phi:C\multimap X$ from a non-empty space $C\in\C$, we need to prove that $\Phi$ is single-valued at all points of some non-empty (comeager)  subspace of $C$. By Lemma~\ref{l:select}, any selection  $\varphi:C\to X$ of the set-valued map $\Phi$ is quasicontinuous. Since the space $X$ is  (strong) $\C$-Piotrowski, the function $\varphi$ is continuous at points of some non-empty  (comeager) set $Z\subset C$. We claim that $\Phi$ is single-valued at each point $z\in Z$. Assuming that $|\Phi(z)|>1$, we can find a point $y\in\Phi(z)\setminus\{\varphi(z)\}$ and choose an open neighborhood $W\subset X$ of $y$ such that $\varphi(z)\notin\overline{W}$. The continuity of $\varphi$ at the point $z$ yields an open neighborhood $U_z\subset Z$ of $z$ such that $\varphi(U_z)\subset X\setminus\overline{W}$. It can be shown that the multi-valued map
map $\Phi':C\multimap X$ defined by
$$\Phi'(z')=\begin{cases}
\Phi(z')\setminus W&\mbox{if $z'\in U_z$},\\
\Phi(z'),&\mbox{otherwise},
\end{cases}
$$
is usco, which contradicts the minimality of the usco map $\Phi$.
\end{proof}

 \section{Fragmentable and strictly fragmentable spaces}

 A topological space $Y$ is {\em fragmented\/} by a metric $d\/$ if for every $\e>0$, each non-empty subspace $A\subset X$ contains a non-empty relatively open subset $U\subset A$ of $d$-diameter $\diam(U)<\e$. If the metric $d$ generates a topology at least as strong as the original topology of $X$, then we shall say that $X$ is {\em strictly fragmented} by the metric $d$. A topological space is called ({\em strictly}) {\em fragmentable} if it is (strictly) fragmented by some metric. It is clear that each metrizable space is strictly fragmentable and each strictly fragmentable space is fragmentable. By \cite{Rib}, a compact Hausdorff space is strictly fragmentable if and only if it is fragmentable.  Each strictly fragmentable space is strongly fragmentable in the sense of Reznichenko \cite[p.104]{Ar96}.

The following theorem was proved by Ribarska \cite{Rib} (see also \cite[5.1.11]{Fab}).

\begin{theorem}[Ribarska]\label{t:F->S} Each fragmentable space $X$ is strong Stegall.
\end{theorem}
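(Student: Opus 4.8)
The plan is to exploit the metric witnessing fragmentability together with the minimality characterization of Lemma~\ref{l:musco}. Fix a metric $d$ that fragments $X$ and let $\Phi:C\multimap X$ be a minimal usco map from a Baire space $C$. For each $\e>0$ I would introduce the open set
$$O_\e:=\bigcup\{V\subset C: V\text{ open and }\diam_d(\Phi(V))<\e\},$$
and the heart of the argument is to show that each $O_\e$ is \emph{dense} in $C$. Granting this, the set $G:=\bigcap_{n\in\w}O_{1/n}$ is a dense $G_\delta$ in $C$, hence comeager because $C$ is Baire. For every $z\in G$ and every $n$ there is an open neighborhood of $z$ whose $\Phi$-image has $d$-diameter below $1/n$; in particular $\diam_d(\Phi(z))<1/n$ for all $n$, so $\diam_d(\Phi(z))=0$. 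Since $d$ is a genuine metric and $\Phi(z)$ is non-empty, this forces $\Phi(z)$ to be a singleton, so $\Phi$ is single-valued on the comeager set $G$, which is exactly the strong Stegall property.

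The core step is the density of $O_\e$, and this is where fragmentability and minimality interlock. Given any non-empty open $V\subset C$, I would apply the fragmenting metric $d$ to the subspace $A:=\Phi(V)\subset X$: there is a non-empty relatively open $A\cap W$ (with $W\subset X$ open) of $d$-diameter strictly less than $\e$. Non-emptiness gives $W\cap\Phi(V)\ne\emptyset$, so the ``only if'' direction of Lemma~\ref{l:musco} supplies a non-empty open $U\subset V$ with $\Phi(U)\subset W$. Because $U\subset V$ we also have $\Phi(U)\subset\Phi(V)=A$, whence $\Phi(U)\subset A\cap W$ and therefore $\diam_d(\Phi(U))<\e$. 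Thus $U\subset O_\e$ and $O_\e\cap V\ne\emptyset$, establishing density.

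I expect the main obstacle to be conceptual rather than computational: recognizing that the minimality characterization is precisely the tool that converts a small-diameter \emph{relatively open} piece of the image $\Phi(V)$ in $X$ back into an \emph{open} set $U$ in the domain on which $\Phi$ is correspondingly small. Once the open set $O_\e$ is set up, the only subtlety is verifying that $\Phi(U)$ lands inside $A\cap W$ (using $U\subset V$ to get $\Phi(U)\subset A$), and that $d$ being a metric upgrades ``$d$-diameter zero'' to ``singleton''. The Baire hypothesis on $C$ is used only at the end to pass from a countable family of dense open sets to a comeager set of single-valuedness.
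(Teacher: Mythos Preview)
Your argument is correct and follows essentially the same route as the paper's proof: both define the family of open sets whose $\Phi$-image has small $d$-diameter, use fragmentability plus Lemma~\ref{l:musco} to prove density, and intersect over a null sequence of $\e$'s to obtain a dense $G_\delta$ on which $\Phi$ is single-valued. Your handling of the key step is in fact slightly more careful than the paper's sketch, since you explicitly write the relatively open piece of $\Phi(V)$ as $A\cap W$ with $W$ open in $X$ before invoking Lemma~\ref{l:musco}, and then recover $\Phi(U)\subset A\cap W$ from $U\subset V$.
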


 \begin{proof} We present a short proof of this theorem, for convenience of the reader. Let $d$ be a metric that fragments $X$ and $\Phi:Z\multimap X$ be a minimal usco map defined on a non-empty Baire topological space $Z$. For every $\e>0$ denote by $\U_\w$ the family of all open subsets of $U\subset Z$ such that the image $\Phi(U)=\bigcup_{z\in U}\Phi(z)$ has $d$-diameter $<\e$. We claim that $\bigcup\U_\e$ is dense in $Z$. Indeed, take any non-empty open set $W\subset Z$. By the fragmentability of $X$, the set $\Phi(W)$ contains a non-empty relatively open subset $V\subset \Phi(W)$ of $d$-diameter $<\e$. By Lemma~\ref{l:musco}, there exists a non-empty open set $U\subset W$ such that $\Phi(U)\subset V$ and hence $U\in\U_\e$.  So, $\bigcup\U_\e$ is dense in $X$ and $G=\bigcap_{n\in\w}\U_{2^{-n}}$ is a dense $G_\delta$-set in $X$. It is clear that for every $z\in Z$ the $d$-diameter of the set $\Phi(z)$ is zero, which means that $\Phi(z)$ is a singleton.
 \end{proof}

A corresponding result for strictly fragmentable spaces was obtained in \cite[Theorem 2.7]{GKMS}, \cite[Lemma 4.1]{MG}, \cite[Theorem 5.1]{KM3}, \cite[Theorem 1]{KKM}.

\begin{theorem}[Giles-Kenderov-Kortezov-Moors-Sciffer]\label{t:sF-P} Each strictly fragmentable space $X$ is strong Piotrowski.
\end{theorem}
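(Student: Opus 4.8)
The plan is to mimic the proof of Ribarska's Theorem~\ref{t:F->S}, replacing the minimal usco map by the quasicontinuous function and invoking \emph{strictness} of the fragmentation only at the very last step. Fix a metric $d$ that strictly fragments $X$, and let $f:C\to X$ be a quasicontinuous map from a non-empty Baire space $C$. For every $\e>0$ I would let $\U_\e$ denote the family of all non-empty open sets $U\subset C$ whose image $f(U)$ has $d$-diameter $\diam(f(U))<\e$, and the first goal is to show that $\bigcup\U_\e$ is dense in $C$.

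The density of $\bigcup\U_\e$ is the crux of the argument and the place where both fragmentability and quasicontinuity enter. Given a non-empty open set $W\subset C$, I would apply the fragmentability of $X$ to the subspace $f(W)$ to obtain a non-empty relatively open subset $V\subset f(W)$ with $\diam(V)<\e$, and write $V=f(W)\cap G$ for some set $G$ open in the original topology of $X$. Choosing any $w\in W$ with $f(w)\in V$, the set $G$ is a neighborhood of $f(w)$, so quasicontinuity of $f$ at $w$ yields a non-empty open set $U\subset W$ with $f(U)\subset G$. Since also $f(U)\subset f(W)$, we get $f(U)\subset f(W)\cap G=V$, whence $\diam(f(U))<\e$ and $U\in\U_\e$. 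This proves that $\bigcup\U_\e$ is dense (and, being a union of open sets, open) in $C$.

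Having established this for every $\e$, I would form the $G_\delta$-set $R=\bigcap_{n\in\IN}\bigcup\U_{2^{-n}}$; as an intersection of dense open sets it is comeager in $C$ (the Baire property of $C$ is not needed for comeagerness, only to make $R$ dense). It remains to check $R\subset C(f)$, and this is exactly where strictness is used. Fix $z\in R$ and a neighborhood $O\subset X$ of $f(z)$ in the original topology. Because $d$ generates a topology at least as strong as the original one, every original-open set is $d$-open, so $O$ contains a $d$-ball $B_d(f(z),\delta)$. Picking $n$ with $2^{-n}\le\delta$, the membership $z\in\bigcup\U_{2^{-n}}$ gives an open $U\ni z$ with $\diam(f(U))<2^{-n}$; since $f(z)\in f(U)$, this forces $f(U)\subset B_d(f(z),\delta)\subset O$. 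Thus $f$ is continuous at $z$, so $C(f)\supset R$ is comeager, which is precisely the assertion that $X$ is strong Piotrowski.

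The only genuine subtlety I anticipate is keeping careful track of the two topologies: the relatively open set $V$ and the neighborhood $G$ must be taken in the original topology (where both quasicontinuity and fragmentability are formulated), whereas the controlling ball $B_d(f(z),\delta)$ lives in the finer $d$-topology. The passage from small $d$-oscillation to genuine continuity rests on the inclusion of the original topology into the $d$-topology, i.e.\ on strictness; this is the one ingredient beyond plain fragmentability, which on its own would only reproduce the Stegall-type conclusion of Theorem~\ref{t:F->S} rather than continuity of $f$.
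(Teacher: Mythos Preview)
Your proof is correct and follows essentially the same approach as the paper: define the families $\U_\e$ of open sets with small $d$-diameter image, use fragmentability together with quasicontinuity to show each $\bigcup\U_\e$ is dense, and then invoke strictness only at the end to conclude that points of the resulting dense $G_\delta$-set are continuity points. You have in fact spelled out two steps more carefully than the paper does (explicitly writing $V=f(W)\cap G$ with $G$ open in the original topology and picking a witness $w\in W$ to apply quasicontinuity, and unpacking the final ``strictness $\Rightarrow$ continuity'' step via a $d$-ball), but the argument is the same.
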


 \begin{proof} We present a short proof of this theorem, for convenience of the reader. Let $d$ be a metric that strictly fragments $X$ and $\varphi:Z\to X$ be a quasicontinuous map defined on a non-empty Baire topological space $Z$. For every $\e>0$ denote by $\U_\w$ the family of all open subsets of $U\subset Z$ such that the image $\varphi(U)$ has $d$-diameter $<\e$. We claim that $\bigcup\U_\e$ is dense in $Z$. Indeed, take any non-empty open set $W\subset Z$. By the choice of the metric $d$, the set $\varphi(U)$ contains a non-empty relatively open subset $V\subset \varphi(U)$ of $d$-diameter $<\e$. By the quasicontinuity of the map $\varphi$, there exists a non-empty open set $U\subset W$ such that $\varphi(U)\subset V$ and hence $U\in\U_\e$.  So, $\bigcup\U_\e$ is dense in $X$ and $G=\bigcap_{n\in\w}\U_{2^{-n}}$ is a dense $G_\delta$-set in $X$. It is clear that each point $z\in G$ is a continuity point of the function $\varphi:Z\to X$ (as the topology generated by the metric $d$ contains the topology of the space $X$).
 \end{proof}

Theorems~\ref{t:P->S}, \ref{t:F->S}, \ref{t:sF-P} imply that for a Hausdorff  topological space and any class $\C$ of Baire spaces we have the following implications:
$$
\xymatrix{
\mbox{strictly fragmentable}\ar@{=>}[r]\ar@{=>}[d]&\mbox{Piotrowski}\ar@{=>}[r]\ar@{=>}[d]&\mbox{strong $\C$-Piotrowski}\ar@{=>}[d]\ar@{=>}[r]&\mbox{$\C$-Piotrowski}\ar@{=>}[d]\\
 \mbox{fragmentable}\ar@{=>}[r]&\mbox{Stegall}\ar@{=>}[r]&\mbox{strong $\C$-Stegall}\ar@{=>}[r]&\mbox{$\C$-Stegall}.
}
 $$
The vertical implications in this diagram can be reversed for game determined spaces introduced by Kenderov, Kortezov and Moors \cite{KKM} and discussed in the next section.

\section{Game characterizations}

In this section we present game characterizations of (strictly) fragmentable spaces due to Kenderov, Kortezov and Moors \cite{KKM}. These characterizations involve the (strict) fragmenting game, which we are going to describe now.

The (strict) fragmenting game involves two players $\Sigma$ and $\Omega$. Given a topological space $X$, the players select, one after the other, non-empty subsets of $X$. The player $\Omega$ starts the game by selecting the whole space $X$ and $\Sigma$ answers  by choosing any subset $A_1$ of $X$ and $\Omega$ goes on by taking a subset $B_1\subset A_1$ which is {\em relatively open} in $A_1$. After that, on the $n$th stage of development of the game, $\Sigma$ takes any subset $A_n$ of the last move $B_{n-1}$ of $\Omega$ and the latter answers by taking again a relatively open subset $B_n$ of the set $A_n$ just chosen by $\Sigma$. Acting this way the players produce a sequence of non-empty sets $A_1\supset B_1\supset A_2\supset\dots\supset A_n\supset B_n\supset\dots$, which is called a {\em play}  and will be denoted by $p=(A_i,B_i)_{i\ge 1}$. The player $\Omega$ is declared the winner of
\begin{itemize}
\item {\em the fragmenting game} $G(X)$ if the set $\bigcap_{i\ge 1}A_i$ contains at most one point;
\item {\em the strict fragmenting game} $G'(X)$ if the set $\bigcap_{i\ge 1}A_i$ is either empty or contains exactly one point $x$ such that each neighborhood $U\subset X$ of $x$ contains all but finitely many sets $A_i$, $i\ge 1$;
\item {\em the determination game} $DG(X)$ if the set $K=\bigcap_{i\ge 1}\bar{A_i}$ is compact and either $K$ is empty or each neighborhood $U\subset X$ of $K$ contains all but finitely many sets $A_i$, $i\ge 1$.
\end{itemize}

\begin{definition}[Kenderov-Kortezov-Moors] A topological space $X$ is defined to be {\em game determined} if the player $\Omega$ has a winning strategy in the determination game $DG(X)$.
\end{definition}

It turns out that the $\Omega$-favorability of the (strict) fragmenting game characterizes (strict) fragmentable spaces, see  \cite{KM2}, \cite{KM3}, \cite{KKM}, \cite{CM}.

\begin{theorem}[Kenderov, Moors]\label{t:game} A regular topological space is (strictly) fragmentable if and only if the player $\Omega$ has a winning strategy in the (strict) fragmenting game $G(X)$ (resp. $G'(X)$).
\end{theorem}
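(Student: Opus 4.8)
The plan is to prove the two implications separately, running the fragmenting game $G(X)$ and its strict variant $G'(X)$ in parallel, since the strict winning condition only augments the ordinary one by a convergence requirement on the distinguished point.

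For the direction ``fragmentable $\Rightarrow$ $\Omega$ has a winning strategy'', fix a metric $d$ that fragments (resp.\ strictly fragments) $X$ and let $\Omega$ play greedily: on the $n$th move, after $\Sigma$ has chosen $A_n$, the fragmentability of $X$ supplies a non-empty relatively open set $B_n\subset A_n$ with $\diam(B_n)<2^{-n}$ in the metric $d$, and $\Omega$ selects this $B_n$. Along any resulting play $A_1\supset B_1\supset A_2\supset\cdots$ we have $\bigcap_{i\ge 1}A_i=\bigcap_{i\ge 1}B_i$, a set of $d$-diameter $0$, hence containing at most one point because $d$ separates points; so $\Omega$ wins $G(X)$. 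In the strict case, if this intersection contains a point $x$, then for every original neighborhood $U$ of $x$ the hypothesis that the $d$-topology is at least as strong as the original topology gives an $r>0$ with the $d$-ball $B_d(x,r)\subset U$; choosing $n$ with $2^{-n}<r$ yields $A_{n+1}\subset B_n\subset B_d(x,r)\subset U$, so all but finitely many $A_i$ lie in $U$ and $\Omega$ wins $G'(X)$.

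The substantive direction is ``$\Omega$ has a winning strategy $\Rightarrow$ $X$ is fragmentable''. I would fix a winning strategy $\sigma$ for $\Omega$ and manufacture a fragmenting metric from it. The difficulty is that $\sigma$ is history-dependent whereas a metric is global; the bridge is a Cantor-type scheme of relatively open sets indexed by the finite positions consistent with $\sigma$. Concretely, I would build by recursion on $n$ a family of $\sigma$-admissible finite sequences $(A_1,\dots,A_n)$ --- those arising when arbitrary legal $\Sigma$-moves are interleaved with the responses $B_i=\sigma(A_1,\dots,A_i)$ dictated by $\sigma$ --- arranging that the relatively open sets reachable at level $n$ refine those at level $n-1$. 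Each point $x\in X$ then acquires addresses recording which admissible sets it passes through, and I would define $d(x,y)$ in terms of the first level at which the addresses of $x$ and $y$ separate, so that small $d$-distance means long common descent through the scheme; in particular a level-$n$ cell has $d$-diameter at most $2^{-n}$ by construction.

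The heart of the argument, and the step I expect to be hardest, is to verify simultaneously that $d$ is a genuine metric and that it fragments $X$. For the first, if $x\ne y$ were never separated by the scheme, they would descend together along a single branch, i.e.\ along a play in which $\Omega$ follows $\sigma$, forcing $\bigcap_{i\ge1}A_i\supset\{x,y\}$ and contradicting the winning condition of $G(X)$. For fragmentation, given $\e>0$ and a non-empty $A\subset X$, I must produce a relatively open $U\subset A$ lying inside a single level-$n$ cell with $2^{-n}<\e$; the idea is to feed $A$ and its successive refinements to $\sigma$ as $\Sigma$'s moves, and to observe that if no such $U$ ever appeared I could prolong this play indefinitely while keeping two points of $A$ apart at every level, again contradicting that $\sigma$ is winning. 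Making the bookkeeping of $\Sigma$'s adversarial choices precise is the main technical labour, and this is where regularity of $X$ is used --- both to separate relatively open sets in the scheme and, for the strict game, to convert the extra neighborhood clause of $G'(X)$ into the statement that the $d$-topology refines the original one, thereby upgrading fragmentation to strict fragmentation.
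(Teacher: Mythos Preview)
The paper does not give its own proof of this theorem: it is stated as a result of Kenderov and Moors, with citations to \cite{KM2}, \cite{KM3}, \cite{KKM}, \cite{CM}, and no argument is supplied. So there is no in-paper proof to compare your proposal against.

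On the proposal itself: your forward direction is correct and standard. For the converse, the outline is in the right spirit but glosses over the genuine obstacle. The tree of $\sigma$-admissible positions is not countably branching---at each node $\Sigma$ may play \emph{any} non-empty subset of the current set---so a point does not acquire a well-defined ``address'', and the phrase ``the first level at which the addresses of $x$ and $y$ separate'' has no clear meaning. One cannot simply read off a metric from this tree. The proof in the cited sources avoids building the metric directly: the winning strategy is used to manufacture a countable sequence of well-ordered (relatively-open) covers of $X$ in the sense of Ribarska's internal characterisation of fragmentability \cite{Rib}, and the fragmenting metric is then obtained from that characterisation. The well-ordering of each cover is what tames the uncountable branching, and it is precisely the bookkeeping you flag as ``the main technical labour'' but do not carry out. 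Your remark on regularity is also slightly misplaced: it is not used to separate cells in the scheme for $G(X)$, but rather enters (in the strict case) when one needs closures of the scheme sets to shrink appropriately, which is how the convergence clause of $G'(X)$ is converted into the statement that the $d$-topology refines the original one.
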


On the other hand, the absence of a winning strategy for the player $\Sigma$ in the (strict) fragmenting game characterizes weak Stegall (weak Piotrowski) spaces, see \cite{KKM}, \cite{CM}. A topological space $X$ is defined to be {\em weak Stegall} (resp. {\em weak Piotrowski}) if $X$ is $\C$-Stegall (resp. $\C$-Piotrowski) for the class $\C$ of complete metric spaces.

\begin{theorem}[Kenderov, Kortezov, Moors] A regular topological space is weak Stegall (resp. weak Piotrowski) if and only if the player $\Sigma$ has no winning strategy in the (strict) fragmenting game $G(X)$ (resp. $G'(X)$).
\end{theorem}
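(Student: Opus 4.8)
The plan is to prove each of the two equivalences by contraposition, exploiting the correspondence between plays of the (strict) fragmenting game and maps on the space of plays. For the Stegall case I would establish the two implications ``$X$ not weak Stegall $\Ra$ $\Sigma$ has a winning strategy in $G(X)$'' and ``$\Sigma$ has a winning strategy in $G(X)$ $\Ra$ $X$ not weak Stegall'', and run the Piotrowski case in parallel with $G'(X)$ and quasicontinuous maps. \emph{From a bad map to a strategy.} Suppose $X$ is not weak Stegall, so there is a minimal usco $\Phi:C\multimap X$ on a complete metric space $C$ with $|\Phi(c)|\ge 2$ for all $c$. I would let $\Sigma$ maintain, alongside its moves $A_n$, a decreasing sequence of non-empty open sets $W_n\subset C$ with $\overline{W_{n+1}}\subset W_n$ and $\diam W_n<2^{-n}$, playing $A_n:=\Phi(W_n)$. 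Given $\Omega$'s move $B_{n-1}=\Phi(W_{n-1})\cap G$ (a relatively open subset of $A_{n-1}$, so $G\subset X$ open), minimality via Lemma~\ref{l:musco} yields a non-empty open $U\subset W_{n-1}$ with $\Phi(U)\subset G$; shrinking $U$ gives $W_n$, and $A_n=\Phi(W_n)\subset G\cap\Phi(W_{n-1})=B_{n-1}$, so the move is legal. Completeness forces $\bigcap_nW_n=\{c\}$, whence $\Phi(c)\subset\bigcap_nA_n$; since $|\Phi(c)|\ge 2$, player $\Omega$ loses and $\Sigma$ wins. The Piotrowski analogue replaces $\Phi$ by a nowhere continuous quasicontinuous $f:C\to X$, sets $A_n:=f(W_n)$, uses quasicontinuity (in place of Lemma~\ref{l:musco}) to pull $\Omega$'s move back, and uses discontinuity of $f$ at $c$ to produce a neighbourhood of $f(c)$ missing every $A_n$, which defeats the convergence clause of $G'(X)$.

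\textbf{From a strategy to a bad map.} Given a winning strategy $s$ for $\Sigma$, I would organise the $s$-consistent partial plays into a tree $T$ whose nodes are finite sequences $(B_1,\dots,B_n)$ of legal $\Omega$-moves (each $B_k$ relatively open in the $\Sigma$-response $A_k$ forced by $s$), let $C$ be the set of infinite branches with the first-difference ultrametric (a complete metric space), attach to a branch $\beta$ its play $(A_n^\beta,B_n^\beta)$, and set $\Phi(\beta):=\bigcap_n\overline{A_n^\beta}$. Since $s$ wins, $\bigcap_nA_n^\beta$ has at least two points, so $\Phi$ is single-valued at no branch. Minimality should follow from Lemma~\ref{l:musco}: a basic cone $[t]$ satisfies $\Phi([t])\subset\overline{A_{n+1}(t)}$, and given open $W$ meeting $\Phi([t])$, regularity of $X$ lets $\Omega$ legally refine $t$ by the move $A_{n+1}(t)\cap W_0$ with $\overline{W_0}\subset W$, giving a subcone mapped into $W$. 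For the Piotrowski version one instead selects $f(\beta)\in\bigcap_nA_n^\beta$; the same refinement makes $f$ quasicontinuous, while the two ways $\Omega$'s strict winning condition can fail (a two-point intersection, or a singleton to which the $A_n^\beta$ do not converge) produce branches arbitrarily close to $\beta$ with $f$-values bounded away from $f(\beta)$, so $f$ is nowhere continuous.

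\textbf{Main obstacle.} The hard part is the Stegall direction above: confirming that $\Phi$ is genuinely usco, i.e. that each $\Phi(\beta)$ is non-empty compact and that $\Phi$ is upper semicontinuous. Upper semicontinuity at $\beta$ would follow if some tail $\overline{A_{n+1}^\beta}$ were trapped inside a prescribed open $U\supset\Phi(\beta)$, which is precisely a compactness property of the nested sets $\overline{A_m^\beta}$ that the non-strict game does not supply for free over an arbitrary regular $X$. I expect to resolve this by restricting $\Omega$'s admissible moves in $T$ to a cofinal family that forces the tails to contract into neighbourhoods of $\bigcap_m\overline{A_m^\beta}$ (against which $s$ still wins, as it wins against all plays), thereby recovering compact values and upper semicontinuity. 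The quasicontinuous/Piotrowski construction avoids this difficulty entirely, since a single-valued map requires only the weaker ``small subcone'' condition that the game delivers directly.
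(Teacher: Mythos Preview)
The paper does not prove this theorem at all: it is stated with attribution to Kenderov, Kortezov and Moors and a pointer to \cite{KKM} and \cite{CM}, and the text moves on immediately. So there is no ``paper's own proof'' to compare against; your sketch is being measured only against the cited literature.

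On its merits, your outline follows the standard route and is largely sound. The ``bad map $\Rightarrow$ strategy'' direction is correct in both the Stegall and Piotrowski variants: tracking shrinking basic open sets $W_n$ in the complete metric domain and playing $A_n=\Phi(W_n)$ (resp.\ $f(W_n)$) does produce a winning $\Sigma$-strategy, and your use of Lemma~\ref{l:musco} (resp.\ quasicontinuity) to pull back $\Omega$'s relatively open move is exactly right. The ``strategy $\Rightarrow$ bad map'' direction for $G'(X)$ also works: the branch space of the tree of $s$-consistent partial plays is a complete ultrametric space, the selection $f(\beta)\in\bigcap_n A_n^\beta$ is well-defined because $\Sigma$'s winning in $G'(X)$ forces $\bigcap_n A_n^\beta\ne\emptyset$, the one-step refinement you describe gives quasicontinuity, and the two failure modes of the strict winning condition combined with regularity give nowhere-continuity.

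The genuine soft spot is the one you flag yourself: in the Stegall direction, $\Phi(\beta)=\bigcap_n\overline{A_n^\beta}$ has at least two points but there is no reason, in an arbitrary regular space, for it to be compact or for the $\overline{A_n^\beta}$ to ``contract'' around it; your proposed fix (restricting $\Omega$'s moves to a cofinal family) does not by itself force compactness of the limit set, and as written it is not a proof. In \cite{KKM} this is handled differently; one clean way (for Tychonoff $X$, which suffices for the applications here) is to take a quasicontinuous selection of $\bigcap_n A_n^\beta$, close its graph in $C\times bX$ for a compactification $bX$ (as in the proof of Theorem~\ref{t:P<=>S}) to get a minimal usco into $bX$, and then argue that weak Stegallness of $X$ forces a single-valued point whose value lies in $X$, contradicting $|\bigcap_n A_n^\beta|\ge 2$. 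You should consult \cite{KKM} for the precise argument in the generality stated.
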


We shall use Theorem~\ref{t:game} to show that the class of strictly fragmentable spaces contains all $\sigma$-spaces. A topological spaces $X$ is called a {\em $\sigma$-space} if it is regular and possesses a $\sigma$-discrete network, see \cite[\S4]{Grue}.
The following theorem generalizes Proposition 2.1 in \cite{KM12} (saying that cosmic spaces are  strictly fragmentable).

\begin{theorem} Each $\sigma$-space $X$ is strictly fragmentable and hence Piotrowski.
\end{theorem}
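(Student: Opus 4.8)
The plan is to establish strict fragmentability \emph{directly}, by manufacturing a fragmenting metric out of the $\sigma$-discrete network, and then to obtain the Piotrowski property for free from Theorem~\ref{t:sF-P}. So let $\mathcal N=\bigcup_{n\in\w}\mathcal N_n$ be a $\sigma$-discrete network for $X$, with each family $\mathcal N_n$ discrete. The first observation I would record is that, since $\mathcal N_n$ is discrete, every point of $X$ lies in \emph{at most one} member of $\mathcal N_n$ (a neighborhood of a point belonging to two members would meet both). This lets me define a level function $g_n:X\to\mathcal N_n\cup\{*\}$ sending $x$ to the unique member of $\mathcal N_n$ containing it, or to a dummy symbol $*$ if there is none.

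From these I build a metric. For each $n$ let $\rho_n(x,y)=0$ if $g_n(x)=g_n(y)$ and $\rho_n(x,y)=1$ otherwise; this is a pseudometric. Put
$$d(x,y)=\sum_{n\in\w}2^{-n}\rho_n(x,y).$$
I then check two things. First, $d$ is a genuine metric: if $x\ne y$, then, $X$ being $T_1$, the network supplies some $N\in\mathcal N_m$ with $x\in N\subset X\setminus\{y\}$, whence $g_m(x)=N\ne g_m(y)$ and $d(x,y)\ge 2^{-m}>0$. Second, the $d$-topology refines the original one: given open $U\ni x$, pick $N\in\mathcal N_m$ with $x\in N\subset U$; then $d(x,y)<2^{-m}$ forces $\rho_m(x,y)=0$, i.e. $g_m(y)=N$, so $y\in N\subset U$, and the $d$-ball of radius $2^{-m}$ about $x$ lies in $U$. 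This is exactly the ``strict'' requirement.

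The heart of the argument is the fragmentation itself, and here I expect the only real obstacle: the members of $\mathcal N_n$ need not be open, so ``$g_n$ is constant'' is not visibly an open condition. I would therefore isolate the following one-level claim: for a single discrete family $\mathcal F$ and any non-empty $A\subset X$ there is a non-empty relatively open $U\subset A$ on which the associated level function is constant. To prove it I use that a discrete family has pairwise disjoint closures and that the union of any subfamily of $\{\overline{M}:M\in\mathcal F\}$ (being discrete, hence locally finite) is closed. If $A\not\subset\bigcup\mathcal F$, then $A\setminus\bigcup\mathcal F$ is relatively open in $A$ and carries the constant value $*$. Otherwise $A\subset\bigcup\mathcal F$; fixing $a\in A$ with $a\in N\in\mathcal F$, the open set $V_N:=X\setminus\bigcup_{M\ne N}\overline{M}$ contains $N$ and misses every other member, so $A\cap V_N$ is a non-empty relatively open subset of $A$ contained in $N$, on which the level function is constantly $N$.

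With the claim in hand the proof finishes by a finite induction. Given $A$ and $\e>0$, choose $m$ with $2^{-m}<\e$; applying the claim successively to $\mathcal N_0,\mathcal N_1,\dots,\mathcal N_m$, each time inside the relatively open piece produced at the previous step (and noting that relative openness is transitive for nested subsets), yields a non-empty relatively open $U\subset A$ on which $g_0,\dots,g_m$ are all constant. Any two points of $U$ then satisfy $d(x,y)\le\sum_{k>m}2^{-k}=2^{-m}<\e$, so $\diam(U)<\e$ and $d$ fragments $X$. Hence $X$ is strictly fragmentable, and Theorem~\ref{t:sF-P} gives that it is Piotrowski. I note that one could instead verify that $\Omega$ wins the strict fragmenting game $G'(X)$ and invoke Theorem~\ref{t:game}, using the same per-level localization as the engine; but constructing the metric directly seems the most economical route.
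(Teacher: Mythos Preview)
Your overall strategy is sound and takes a genuinely different route from the paper. The paper invokes the Kenderov--Moors game characterization (Theorem~\ref{t:game}) and exhibits a winning strategy for $\Omega$ in $G'(X)$: having first replaced the network by closures, at level $n$ the player $\Omega$ responds to $\Sigma$'s set $S$ with $S\setminus\bigcup\mathcal N_n$ if this is nonempty, and otherwise with $S\cap N$ for some $N\in\mathcal N_n$ meeting $S$. You instead build the fragmenting metric by hand. Both arguments rest on the same per-level localization, as your closing remark anticipates; your direct construction has the merit of not depending on Theorem~\ref{t:game}, which is itself a nontrivial result quoted from the literature.

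There is, however, one real slip. In Case~1 of your one-level claim you assert that $A\setminus\bigcup\mathcal F$ is relatively open in $A$, but this needs $\bigcup\mathcal F$ to be closed, which fails for arbitrary discrete families: with $X=\IR$, $\mathcal F=\{\IQ\}$ (a one-element, hence discrete, family) and $A=\IR$, the set $A\setminus\bigcup\mathcal F$ is the irrationals, and indeed no nonempty open subset of $\IR$ carries a constant level function here. Your Case~2 is fine as written (you correctly pass to closures in defining $V_N$), but the dichotomy itself is broken because Case~1 does not deliver a relatively open set. The fix is immediate and is precisely the paper's opening move: by regularity of $X$, replace each $N\in\mathcal N$ by $\overline N$; the result is still a $\sigma$-discrete network (given $x\in U$ open, pick $V$ with $x\in V\subset\overline V\subset U$, then $N\in\mathcal N$ with $x\in N\subset V$, so $x\in\overline N\subset U$), now consisting of closed sets, whence each $\bigcup\mathcal N_n$ is closed since discrete families are locally finite. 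With that single line prepended, your argument goes through verbatim.
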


\begin{proof} By definition, the $\sigma$-space $X$ has a $\sigma$-discrete network
 $\mathcal N=\bigcup_{k\in\w}\mathcal N_k$ (here $\mathcal N_k$ are discrete families in $X$). Replacing each set $N\in\mathcal N$ by its closure $\bar N$ in the regular space $X$, we can assume that the network $\mathcal N$ consists of closed subsets of $X$.

 By Theorem~\ref{t:game}, to show that $X$ is strictly fragmentable, it suffices to describe a wining strategy for the player $\Omega$ in the strict fragmenting game $G'(X)$.
Given the $n$th move $B_n$ of $\Sigma$, the player $\Omega$ consider the relatively open subset $U_n=B_n\setminus \bigcup\mathcal N_n$ of $B_n$. If $U_n\ne\emptyset$, then $\Omega$ answers by the set $A_n=U_n$. If the set $U_n$ is empty, then $\Omega$ answers with the set $A_n=B_n\cap N$, where $N\in\mathcal N_n$ is any set such that $B_n\cap N\ne\emptyset$. The set $A_n$ is relatively open in $B_n$, being the complement of the relatively closed set $B_n\cap\bigcup (\mathcal N_n\setminus\{N\})$.

Let us show that this strategy of the player $\Omega$ is winning. Let $(A_n,B_n)_{n\ge 1}$ be a play of the game $G'(X)$ where $\Omega$ plays according to the strategy described above.
Assume that the intersection $\bigcap_{n\in\IN}A_n$ contains some point $x\in X$ and let $O_x\subset X$ be any neighborhood of $x$. Find a set $N\in\mathcal N$ with $x\in N\subset O_x$ and a number $k\in\w$ with $N\in\mathcal N_k$. It follows from $x\notin U_k=B_k\setminus\bigcup\mathcal N_k$ that $x\in A_k=B_k\cap N'$ for some $N'\in\mathcal N_k$. Taking into account that $x\in N\cap N'$ and the family $\mathcal N_k\ni N,N'$ is discrete, we conclude that $N=N'$ and hence $A_k\subset N\subset O_x$. This means that the sequence $(A_k)_{k\in\IN}$ converges to the point $x$ and by the Hausdorff property of $X$, $\bigcap_{k\in\IN}A_k=\{x\}$.
Therefore, the strategy of $\Omega$ is winning in the strong fragmenting game $G'(X)$ and the space $X$ is strictly fragmented.
\end{proof}

Theorem~\ref{t:game} implies that each regular strictly fragmentable space is game determined. Moreover, according to \cite[Proposition 3]{KKM} we have the following characterization.

\begin{theorem}[Kenderov, Kortezov, Moors]\label{t:sf<=>f+gd} A regular topological space $X$ is strictly fragmentable if and only if it is fragmentable and game determined.
\end{theorem}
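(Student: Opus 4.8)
The plan is to recast all three notions as statements about the topological games of the previous section and then to reason at the level of strategies. By Theorem~\ref{t:game}, for a regular space $X$ fragmentability is equivalent to $\Omega$ having a winning strategy in $G(X)$ and strict fragmentability to $\Omega$ having a winning strategy in $G'(X)$, while game determinacy is by definition the existence of a winning strategy of $\Omega$ in $DG(X)$. The three games share the same set of admissible plays $p=(A_i,B_i)_{i\ge1}$ and differ only in the condition under which $\Omega$ wins, so a single strategy of $\Omega$ may be tested against all three winning conditions simultaneously.

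First I would prove that a strictly fragmentable $X$ is fragmentable and game determined. Fix a strategy $\sigma$ of $\Omega$ winning in $G'(X)$. Fragmentability is immediate, since the winning condition of $G'(X)$ (that $\bigcap_iA_i$ be empty or a single point attracting the $A_i$) is stronger than that of $G(X)$ (that $\bigcap_iA_i$ have at most one point); hence $\sigma$ wins $G(X)$ as well. For game determinacy I would show that $\sigma$ also wins $DG(X)$. Along a $\sigma$-play put $K=\bigcap_i\overline{A_i}$. If $\bigcap_iA_i=\{x\}$ and $A_i\to x$, then regularity lets us separate any $y\ne x$ from $x$ by an open set $U\ni x$ with $y\notin\overline U$; as almost all $A_i$ lie in $U$, we get $y\notin K$, so $K=\{x\}$ is compact and attracting. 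The remaining case $\bigcap_iA_i=\emptyset$ is the delicate one, because there the $G'$-condition is automatically satisfied and gives no information about the closures $\overline{A_i}$; I would treat it by a strategy-stealing argument, showing that if $K$ were non-empty yet some neighbourhood $U\supseteq K$ failed to contain a tail of the sets $A_i$, then one could feed $\sigma$ an alternative sequence of moves of $\Sigma$ that keeps points outside $U$ and thereby produces a $\sigma$-play losing in $G'(X)$, a contradiction.

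The converse---that a fragmentable, game determined, regular $X$ is strictly fragmentable---is the substantive half. Fix winning strategies of $\Omega$ in $G(X)$ and in $DG(X)$ and try to assemble from them a strategy winning $G'(X)$. Playing the determination strategy produces, along every consistent play, a compact set $K=\bigcap_i\overline{A_i}$ which is empty or absorbs a tail of the sets $A_i$ into each of its neighbourhoods. If $K=\emptyset$ then $\bigcap_iA_i\subseteq K=\emptyset$ and $\Omega$ has already won $G'(X)$. If $K\ne\emptyset$ I would use that $K$, being a subspace of the fragmentable space $X$, is fragmentable, and, being compact, is strictly fragmentable by \cite{Rib}; so $K$ carries a metric $d_K$ generating a topology stronger than its own and fragmenting it. The strategy then interleaves two tasks: the determination moves drive the tail of the play into smaller and smaller neighbourhoods of $K$, while the relative openness of $\Omega$'s moves is used to cut the traces of the $A_i$ on $K$ down to sets of $d_K$-diameter tending to zero. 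Since every point of $\bigcap_iA_i$ lies in $K$, this pins $\bigcap_iA_i$ to at most one point $x$ and forces $A_i\to x$, which is exactly the $G'(X)$-winning condition.

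The main obstacle is this coordination step in the converse: one must transport the $d_K$-control, which a priori lives on the compact set $K$, to the sets $A_i\subseteq X$ that only lie near $K$, and check that a single interleaved strategy can maintain both the determination bookkeeping (a compact, attracting $K$) and the strict-fragmenting bookkeeping (vanishing $d_K$-diameter of the traces). It is here that the compactness of $K$, hence Ribarska's upgrade of fragmentability to strict fragmentability on $K$, is indispensable: it is what rules out that $\bigcap_iA_i$ be a single point sitting inside a strictly larger $K$ with no convergence, and thus what turns the determination win into a strict-fragmenting win.
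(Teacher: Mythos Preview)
The paper does not prove this theorem; it attributes the statement to \cite[Proposition~3]{KKM} and only remarks beforehand that ``Theorem~\ref{t:game} implies that each regular strictly fragmentable space is game determined,'' without giving the argument. So there is no proof in the paper to compare against, and your proposal must be judged on its own merits.

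Your forward direction has a real gap in the case $\bigcap_iA_i=\emptyset$. You correctly observe that the $G'(X)$-winning condition is vacuous there, but your proposed ``strategy-stealing'' fix is not a proof: if along some $\sigma$-play one has $\bigcap_iA_i=\emptyset$ while $K=\bigcap_i\overline{A_i}$ fails the $DG(X)$ condition, you suggest replaying with $\Sigma$ keeping points outside a bad neighbourhood $U$ of $K$. But nothing prevents the new play from again having empty intersection $\bigcap_iA_i'=\emptyset$, in which case $\sigma$ still wins $G'(X)$ and no contradiction arises. You also do not address the possibility that $K$ is non-compact. The usual route here is not to reuse an arbitrary $G'(X)$-strategy, but to build the $DG(X)$-strategy directly from a strictly fragmenting metric $d$ and use that $d$-smallness of the $A_i$ together with regularity controls the $X$-closures.

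Your converse direction contains a more serious circularity. The compact set $K=\bigcap_i\overline{A_i}$, and hence the metric $d_K$ you want to use, is determined only by the \emph{completed} play; the player $\Omega$ cannot consult $d_K$ while choosing $B_n$. ``Interleaving'' determination moves with moves that shrink $d_K$-diameter therefore makes no sense as stated, and the acknowledgement in your final paragraph that this coordination is ``the main obstacle'' does not resolve it. The standard argument instead interleaves the $DG(X)$-strategy with fragmenting moves coming from the \emph{global} fragmenting metric $d$ on $X$ (not a metric on $K$), and then uses the compactness and absorbing property of $K$ to pass from $d$-smallness of the $A_i$ to $X$-convergence; Ribarska's result on compact fragmentable spaces is not the mechanism.
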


Applying game determined spaces to $\C$-Piotrowski spaces, we get the following characterization.

\begin{theorem}\label{t:P<=>S} Let $\C$ be a class of Baire spaces, closed under taking dense $G_\delta$-sets.
A game determined Tychonoff space $X$ is (strong) $\C$-Piotrowski if and only if $X$ is (strong) $\C$-Stegall.
\end{theorem}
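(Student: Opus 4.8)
The forward implication needs no new ideas: a Tychonoff space is Hausdorff, so by Theorem~\ref{t:P->S} every (strong) $\C$-Piotrowski Tychonoff space is (strong) $\C$-Stegall, and this uses neither game-deter\-mi\-nacy nor the closure of $\C$ under dense $G_\delta$-sets. Thus the entire content of the theorem is the converse, and my plan is to feed the (strong) $\C$-Stegall hypothesis back into an arbitrary quasicontinuous map $f:C\to X$ with $C\in\C$ nonempty.

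I would first isolate what is really needed, as a purely formal reduction. It suffices to produce a \emph{minimal usco} map $\Phi:C'\multimap X$ on a dense $G_\delta$-subspace $C'\subseteq C$ for which $f|C'$ is a selection, i.e. $f(z)\in\Phi(z)$ for all $z\in C'$. Indeed, if $\Phi(z)$ is a singleton, then, since $\Phi$ is upper semicontinuous and $f(z)\in\Phi(z)$, for every open $O\ni f(z)$ there is a neighborhood $V\ni z$ with $f(V)\subseteq\Phi(V)\subseteq O$, so $z$ is a continuity point of $f|C'$ and hence of $f$ by Lemma~\ref{l:Prest} (this is where the regularity of $X$ is used). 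Because $\C$ is closed under dense $G_\delta$-subsets we have $C'\in\C$, so the (strong) $\C$-Stegall property of $X$ yields a single-valued point of $\Phi$ (respectively a comeager set of single-valued points). In the first case $f$ has a continuity point; in the second, $C(f)$ contains a set comeager in $C'$, which is comeager in $C$ since $C'$ is a dense $G_\delta$ in $C$. This gives the (strong) $\C$-Piotrowski property, so everything reduces to constructing $\Phi$.

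The heart of the proof is this construction, and it is where game-deter\-mi\-nacy enters. Fix a winning strategy $\sigma$ for the player $\Omega$ in the determination game $DG(X)$, and run it with $\Sigma$-moves supplied by $f$: starting from a nonempty open $U_1\subseteq C$, put $A_1=f(U_1)$; given $\Omega$'s relatively open answer $B_n=A_n\cap W_n$ (with $W_n\subseteq X$ open and $B_n\ne\emptyset$), pick $v\in U_n$ with $f(v)\in W_n$ and, using the quasicontinuity of $f$ at $v$, a nonempty open $U_{n+1}\subseteq U_n$ with $f(U_{n+1})\subseteq W_n$, so that $A_{n+1}=f(U_{n+1})\subseteq B_n$ is a legal move. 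The sequence $(A_n,B_n)$ is a $\sigma$-play, hence $\Omega$ wins and $K=\bigcap_n\overline{f(U_n)}$ is compact, every neighborhood of $K$ absorbing all but finitely many images $f(U_n)$. Iterating over a dense family of starting sets and intersecting the resulting dense open sets, one extracts a dense $G_\delta$-subspace $C'\subseteq C$ on which the cluster map $\Phi(z):=\bigcap_{V\ni z}\overline{f(V)}$ has compact, and (since $f(z)\in\Phi(z)$) nonempty, values. With compact values, upper semicontinuity of $\Phi$ is routine, and minimality follows from Lemma~\ref{l:musco}: given open $V\subseteq C'$ and open $W\subseteq X$ meeting $\Phi(V)$, one finds $v\in V$ with $f(v)\in W$, shrinks $W$ to $W'$ with $\overline{W'}\subseteq W$ by regularity, and applies quasicontinuity to get a nonempty open $U\subseteq V$ with $\Phi(U)\subseteq\overline{f(U)}\subseteq\overline{W'}\subseteq W$. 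By Lemma~\ref{l:musco-rest} one may then restrict $\Phi$ to any dense $G_\delta$ in $\C$ without losing minimality.

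The delicate point, and the main obstacle, is exactly the compactness of the values of the cluster map, i.e. extracting from $\Omega$'s winning strategy a comeager set of $z$ at which $\bigcap_{V\ni z}\overline{f(V)}$ is compact. The naive attempt to realize a single $\Phi(z)$ by one $\sigma$-play fails, because $\Omega$'s relatively open moves may narrow \emph{away} from $f(z)$, so one cannot keep $z$ inside the sets $U_n$; the compactness must therefore be produced generically, by a Banach--Mazur-type category argument over the tree of $\sigma$-plays rather than pointwise. This is the only step using that $X$ is game determined; the reduction, the transfer of single-valuedness to continuity, and the minimality of $\Phi$ are formal consequences of Lemmas~\ref{l:Prest}, \ref{l:musco} and \ref{l:musco-rest} together with the regularity of the Tychonoff space $X$.
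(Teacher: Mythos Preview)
Your overall plan coincides with the paper's: wrap $f$ in a minimal usco envelope on a dense $G_\delta$-subspace in $\C$, apply the (strong) $\C$-Stegall hypothesis there, and convert single-valued points into continuity points via upper semicontinuity and Lemma~\ref{l:Prest}. That reduction is correct; the gap is in how you build the envelope.

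The paper does not run the determination game by hand. It fixes a compactification $bX$ of $X$ and takes $\bar f:C\multimap bX$ to be the closure of the graph of $f$ in $C\times bX$. Since $bX$ is compact, $\bar f$ is automatically usco; quasicontinuity of $f$ together with Lemma~\ref{l:musco} makes it minimal. Game determinacy then enters as a single black-box citation: by Theorem~6 of~\cite{KKM}, the set $\{z\in C:\bar f(z)\subset X\}$ contains a dense $G_\delta$-set $G$. Restricting $\bar f$ to $G$ (Lemma~\ref{l:musco-rest}) gives the minimal usco map into $X$ that you want, and the rest proceeds exactly as you describe.

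You instead try to build the envelope directly in $X$ via the cluster map $\Phi(z)=\bigcap_{V\ni z}\overline{f(V)}$, feeding $\Sigma$-moves $A_n=f(U_n)$ into a winning strategy for $\Omega$ in $DG(X)$. You correctly observe that a single play does not trap a preassigned $z$, and you propose to fix this by a ``Banach--Mazur-type category argument over the tree of $\sigma$-plays''. But that argument is never carried out; it is precisely the content of~\cite[Theorem~6]{KKM}, so in effect you have reduced the theorem to that result without proving or citing it. Note also that ``with compact values, upper semicontinuity of $\Phi$ is routine'' is too optimistic: compactness of $\Phi(z)=\bar f(z)\cap X$ alone does not force any single $\overline{f(V)}$ into a prescribed neighborhood of $\Phi(z)$. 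What is needed is the absorption clause of the $DG(X)$ winning condition, or equivalently $\bar f(z)\subset X$ rather than merely $\bar f(z)\cap X$ compact---and that is exactly what the compactification route delivers for free.
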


\begin{proof} The ``only if'' part follows from Theorem~\ref{t:P->S}. To prove the ``if'' part, assume that the game determined Tychonoff space $X$ is (strong) $\C$-Stegall. To show that $X$ is (strong) $\C$-Piotrowski, take any quasicontinuous map $\varphi:Z\to X$ defined on a non-empty space $Z\in\C$.

Let $bX$ be any compactification of $X$ and $\bar \varphi$ be the closure of the graph $\{(z,\varphi(z)):z\in Z\}\subset Z\times bX$ of $\varphi$ in $Z\times bX$. The set $\bar \varphi$ can be thought as a multi-valued map $\bar \varphi:Z\multimap bX$ assigning to each point $z\in Z$ the set $\bar \varphi(z)=\{x\in bX:(z,x)\in\bar\varphi\}$.  Using Lemma~\ref{l:musco}, it can be shown that the quasicontinuity of the map $\varphi$ implies that $\bar \varphi$ is a minimal usco map.  By Theorem 6 \cite{KKM}, the set $\{z\in Z:\bar \varphi(z)\subset X\}$ contains a dense $G_\delta$-subset $G\subset Z$. Then the map $\bar \varphi|G:G\multimap X$ is a well-defined usco map into $X$. By Lemma~\ref{l:musco-rest}, the usco map $\bar \varphi |G:G\multimap X$ is minimal. Since the space $X$ is (strong) $\C$-Stegall, the map $\bar \varphi$ is single-valued at all points of some non-empty (comeager) set $C\subset G\in\C$.  The upper semicontinuity of the usco map $\bar \varphi$ implies the continuity of the map $\varphi$ at each point of the set $C$. This means that the space $X$ is (strong) $\C$-Piotrowski.
\end{proof}

By Theorems~\ref{t:sf<=>f+gd} and \ref{t:P<=>S}, for any game determined Tychonoff space  the following implications and equivalences hold:
$$
\xymatrix{
\mbox{strictly fragmentable}\ar@{=>}[r]\ar@{<=>}[d]&\mbox{Piotrowski}\ar@{=>}[r]\ar@{<=>}[d]&\mbox{weak Piotrowski}\ar@{<=>}[d]\\
 \mbox{fragmentable}\ar@{=>}[r]&\mbox{Stegall}\ar@{=>}[r]&\mbox{weak Stegall}.
}
 $$

The (consistent) counterexamples constructed in \cite{K99}, \cite{GKMS}, and \cite{MS} show that the horizontal implications in the above diagram cannot be reversed even for compact Hausdorff spaces.

It is clear that each compact Hausdorff space is game determined. On the other hand, the Sorgenfrey line is not game determined, see \cite[Corollary 6]{KKM}. This fact can be alternatively derived from the following two (known) properties of the Sorgenfrey line.

We recall that the {\em Sorgenfrey line} $\IS$ is the real line endowed with the topology generated by the base consisting of half-intervals $[a,b)$, $a<b$.

\begin{example}\label{e:sorg} The Sorgenfrey line fragmentable (and hence Stegall) but not (weak) Piotrowski.
\end{example}

\begin{proof} Observe that the real line $\IR$ is a complete metric space and the identity map $\IR\to\IS$ is quasicontinuous but has no continuity points, witnessing that $\IS$ is not (weak) Piotrowski. The Sorgenfrey line is fragmented by the standard Euclidean metric. So, it is fragmentable and hence Stegall.
\end{proof}

As was observed in \cite{KKM}, the class of game determined spaces is very wide. Besides strongly fragmentable regular spaces, it contains all Tychonoff spaces with countable separation, discussed in the next section.

\section{Spaces with countable separation}

\begin{definition}[Kenderov-Kortezov-Moors]\label{d:CS} A Tychonoff space $X$ is defined to have {\em countable separation} if some compactification $bX$ of $X$ contains a countable family $\U$ of open subsets such that for any points $x\in X$ and $y\in K\setminus X$ some set $U\in\U$ contains exactly one point of the set
$\{x,y\}$. In this case we shall say that the family $\U$ separates the points of the sets $X$ and $bX\setminus X$.
\end{definition}

The following important result was proved in \cite[Proposition 2]{KKM}

\begin{theorem}[Kenderov-Kortezov-Moors]\label{t:GD} Each Tychonoff space with countable separation is game determined.
\end{theorem}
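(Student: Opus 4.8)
The plan is to exhibit an explicit winning strategy for the player $\Omega$ in the determination game $DG(X)$, using the countable separating family to force the "outcome set" of a play to lie purely inside, or purely outside, $X$.

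Fix a compactification $bX$ together with a countable family $\U=\{U_n:n\in\w\}$ of open subsets of $bX$ separating the points of $X$ from the points of $bX\setminus X$; note that $bX$, being compact Hausdorff, is regular. I would let $\Omega$ \emph{process} the set $U_n$ on the $n$th move. Given the move $A_n$ of $\Sigma$ (a non-empty subset of $X$), if $A_n\cap U_n\ne\emptyset$, then $\Omega$ picks a point $a\in A_n\cap U_n$ and, using the regularity of $bX$, an open set $W_n\subset bX$ with $a\in W_n$ and $\cl_{bX}(W_n)\subset U_n$, and answers with the non-empty relatively open set $B_n:=A_n\cap W_n$. If $A_n\cap U_n=\emptyset$, then $\Omega$ answers with $B_n:=A_n$. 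Record $\e_n:=1$ in the first case and $\e_n:=0$ in the second.

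Next I would analyse the resulting play $(A_i,B_i)_{i\ge1}$. The closures $\cl_{bX}(A_i)$ form a decreasing sequence of non-empty compacta in $bX$, so $K:=\bigcap_{i\ge1}\cl_{bX}(A_i)$ is a non-empty compact subset of $bX$; moreover the set $\bigcap_{i\ge1}\cl_X(A_i)$ appearing in the winning condition equals $K\cap X$. The crucial point is a \emph{decisiveness} property: if $\e_n=1$, then $A_i\subset W_n$ for all $i>n$, whence $K\subset\cl_{bX}(W_n)\subset U_n$; if $\e_n=0$, then $A_i\cap U_n=\emptyset$ for all $i\ge n$, whence $K\cap U_n=\emptyset$. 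Therefore every $z\in K$ satisfies $z\in U_n\Leftrightarrow\e_n=1$, so all points of $K$ share the same membership pattern $\e=(\e_n)_{n\in\w}$ with respect to $\U$. Consequently, if $K$ contained both a point $x\in X$ and a point $y\in bX\setminus X$, some $U_m\in\U$ would contain exactly one of $x,y$, contradicting that $x$ and $y$ share the pattern $\e$. Hence either $K\subset X$ or $K\cap X=\emptyset$. In the first case $\bigcap_{i\ge1}\cl_X(A_i)=K$ is compact, and for any open $V\subset bX$ with $K\subset V$ the decreasing compacta $\cl_{bX}(A_i)\setminus V$ have empty intersection, so $\cl_{bX}(A_N)\subset V$ for some $N$ and thus $A_i\subset V\cap X$ for all $i\ge N$; applying this with $V\cap X$ an arbitrary neighborhood of $K$ in $X$ shows that $\Omega$ wins. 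In the second case $\bigcap_{i\ge1}\cl_X(A_i)=K\cap X=\emptyset$, and $\Omega$ wins by the "empty" alternative of the winning condition.

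The main obstacle is the gap between closure-membership and open-membership. A naive strategy that merely records whether $A_n$ meets $U_n$ would yield only $z\in\cl_{bX}(U_n)$ in the positive case, which is too weak to invoke the separating family (whose sets are open). Inserting the shrinking step $\cl_{bX}(W_n)\subset U_n$, available precisely because the compactification is regular, is what makes the membership pattern of each point of $K$ \emph{exact}, and this exactness is exactly what the separation property needs in order to force $K$ to lie wholly inside or wholly outside $X$.
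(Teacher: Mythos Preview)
The paper does not give its own proof of this theorem; it simply attributes the result to Kenderov, Kortezov and Moors \cite[Proposition~2]{KKM} and states it without argument. So there is nothing in the paper to compare against line by line.

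Your argument is correct and is in fact the natural (and, as far as I know, the original) proof: use the compactification to guarantee a non-empty limit set $K=\bigcap_i\cl_{bX}(A_i)$, and have $\Omega$ spend move $n$ deciding once and for all whether $K$ will lie inside $U_n$ or avoid it. The regular-shrinking step $\cl_{bX}(W_n)\subset U_n$ is exactly the right device to upgrade ``$A_n$ meets $U_n$'' to ``$K\subset U_n$''. The dichotomy $K\subset X$ versus $K\cap X=\emptyset$ then drops out of the separating property, and your verification of the winning condition in each case is sound. In the first case your remark that an arbitrary $X$-neighborhood $U$ of $K$ is $V\cap X$ for some $bX$-open $V\supset K$ (since $K\subset U\subset V$) is the point that makes the eventual-containment argument go through. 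Nothing is missing.
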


This theorem motivates a deeper study of spaces with countable separation. We start with the following characterization of such spaces.

\begin{proposition} For a Tychonoff space  $X$ the following conditions are equivalent:
\begin{enumerate}
\item $X$ has countable separation;
\item for any Tychonoff space $Y$ containing $X$ there exists a countable family $\U$ of open subsets of $Y$ such that for any points $x\in X$ and $y\in Y\setminus X$ some set $U\in\U$ contains exactly one point of the doubleton $\{x,y\}$;
\item for some Tychonoff space $Y$ with countable separation that contains $X$, there exists a countable family $\U$ of open subsets of $Y$ such that for any points $x\in X$ and $y\in Y\setminus X$ some set $U\in\U$ contains exactly one point of the doubleton  $\{x,y\}$.
\end{enumerate}
\end{proposition}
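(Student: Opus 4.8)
The plan is to establish the cyclic chain $(1)\Rightarrow(2)\Rightarrow(3)\Rightarrow(1)$, of which two links are short. For $(2)\Rightarrow(3)$ I would take $Y$ to be any compactification of $X$: it is Tychonoff, contains $X$, and trivially has countable separation (its remainder in itself is empty, so the empty family works), so applying $(2)$ to this $Y$ produces the family required in $(3)$. For $(3)\Rightarrow(1)$, starting from a Tychonoff $Y\supseteq X$ with countable separation and a countable family $\U$ of open subsets of $Y$ separating $X$ from $Y\setminus X$, I would fix a compactification $bY$ of $Y$ in whose remainder a countable family $\V$ separates $Y$, set $bX:=\overline{X}^{bY}$ (a compactification of $X$), enlarge each $U\in\U$ to an open $W_U\subseteq bY$ with $W_U\cap Y=U$, and verify that $\{bX\cap W_U:U\in\U\}\cup\{bX\cap V:V\in\V\}$ separates $X$ from $bX\setminus X$: a remainder point $z\in bX\setminus X$ lies either in $Y$ (handled by the sets coming from $\U$) or in $bY\setminus Y$ (handled by those coming from $\V$).

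The substantive implication is $(1)\Rightarrow(2)$. Given $bX$ with separating family $\U$ and an arbitrary Tychonoff $Y\supseteq X$, I would fix a compactification $bY$ of $Y$ and put $cX:=\overline{X}^{bY}$, a compactification of $X$ sitting inside $bY$. A remainder point $z\in bY\setminus X$ lies either outside $cX$ --- and then the single open set $bY\setminus cX$ separates it from all of $X\subseteq cX$ --- or in $cX\setminus X$. Thus it suffices to produce a countable family of open subsets of $cX$ separating $X$ from $cX\setminus X$; extending its members to $bY$, adjoining $bY\setminus cX$, and intersecting everything with $Y$ then yields the family required in $(2)$. The whole difficulty is therefore concentrated in transporting the separating family from the given compactification $bX$ to the a priori unrelated compactification $cX$.

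To carry out this transport I would pass through a common compactification $A$ dominating both, namely the closure $A$ of $\{(x,x):x\in X\}$ in $bX\times cX$ (alternatively $A=\beta X$), with coordinate projections $\pi_1:A\to bX$ and $\pi_2:A\to cX$ restricting to the identity on $X$. A short net argument gives $\pi_i^{-1}(X)=X$ (if a net in $X$ converges to $z\in A$ and $\pi_i(z)=x_0\in X$, then the net converges to $x_0$ in the target compactification, hence in $X$, hence in $A$, so $z=x_0$), so each $\pi_i$ sends $A\setminus X$ into the remainder and has singleton fibres over $X$. Pulling $\U$ back along $\pi_1$ is the easy ``up'' step and yields a countable open family $\mathcal G$ separating $X$ from $A\setminus X$ in $A$. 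The main obstacle is the ``down'' step along the closed surjection $p:=\pi_2:A\to cX$: for open $G\subseteq A$ the saturation $G^{\#}:=cX\setminus p(A\setminus G)$ is open in $cX$ with $x\in G^{\#}\Leftrightarrow x\in G$ for $x\in X$, but a remainder point $b$ lies in $G^{\#}$ only when the whole compact fibre $p^{-1}(b)\subseteq A\setminus X$ is contained in $G$.

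To get around this I would first close $\mathcal G$ under finite unions (keeping it countable), and then for $x\in X$ and $b\in cX\setminus X$ argue by cases. If some $G\in\mathcal G$ contains $x$ but not the whole fibre $p^{-1}(b)$, then $G^{\#}$ separates $x$ from $b$. Otherwise every $G\in\mathcal G$ containing $x$ contains the fibre, whence the sets of $\mathcal G$ that separate $x$ from the individual points of $p^{-1}(b)$ must all avoid $x$; by compactness of the fibre, finitely many of them union to a $G^{*}$ with $p^{-1}(b)\subseteq G^{*}$ and $x\notin G^{*}$, so that $(G^{*})^{\#}$ separates $b$ from $x$. The family $\{G^{\#}\}$ ranging over the countable finite-union family then separates $X$ from $cX\setminus X$, completing the transport and with it $(1)\Rightarrow(2)$. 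This compactness-plus-finite-unions device in the down step is the crux of the proposition.
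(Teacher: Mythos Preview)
Your argument is correct and matches the paper's route. For $(3)\Rightarrow(1)$ both pass to a compactification of $Y$ and take the union of the two countable families; for $(1)\Rightarrow(2)$ the paper first reduces to $bX=\beta X$ and then pushes forward along the canonical map $\bar g:\beta X\to\overline{X}^{\,\beta Y}$ via $V\mapsto Y\setminus\bar g(\beta X\setminus V)$ --- which is exactly your saturation $V^{\#}$ with $A=\beta X$ and $p=\bar g$ --- and its ``It can be shown'' elides precisely the finite-union/compactness step over the fibres that you spell out, so your write-up is in fact more complete on this point than the paper's own.
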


\begin{proof} The implications $(1)\Ra(3)\Leftarrow (2)$ are trivial.
\smallskip

$(3)\Ra(1)$ Assume that for  some Tychonoff space $Y$ with countable separation that contains $X$, there exists a countable family $\V$ of open subsets of $Y$ separating the points of the sets $X$ and $Y\setminus X$. Since the space $Y$ has countable separation, some compactification $bY$ of $Y$ contains a countable family $\mathcal W$ separating the points of the sets $Y$ and $b Y\setminus Y$.

It follows that the closure $\bar X$ of $X$ in $bY$ is a compactification of $X$ and $\U=\{\bar X \cap V:V\in\V\}\cup\{\bar X\cap W:W\in\mathcal W\}$ is a countable family of open sets in $\bar X$ separating points of the sets $X$ and $\bar X\setminus X$, and witnessing that the space $X$ has countable separation.
\smallskip

$(1)\Ra(2)$ Assume that the space $X$ has countable separation. Then $X$ has a compactification $bX$ containing a countable family $\V$ of open sets, separating points of the sets $X$ and $bX\setminus X$. Let $f:\beta X\to bX$ be the canonical map of the Stone-\v Cech compactification $\beta X$ of $X$ onto $bX$. It follows that $f^{-1}(X)=X$ and hence $\V'=\{f^{-1}(V):V\in\V\}$ is a countable family of open sets separating the points of the sets $X$ and $\beta X$. Therefore, we lose no generality assuming that $bX=\beta X$. Let $Y$ be any Tychonoff space containing $X$ and let $\beta Y$ be the Stone-\v Cech compactification of $Y$. Then the closure $\bar X$ of $X$ in $\beta Y$ is a compactification of $X$. The inclusion map $g:X\to \bar X$ admits a continuous extension $\bar g:\beta X\to \bar X\subset \beta Y$. It can be shown that the countable family of open sets $\U=\{Y\setminus\bar X\}\cup\{Y\setminus \bar g(\beta X\setminus V):V\in\mathcal V\}$ of $Y$ separates points of the sets $X$ and $Y\setminus X$.
\end{proof}

The class of Tychonoff spaces with countable separation is very wide: for every compact Hausdorff space $K$ the family of subspaces with a countable separation in $K$ is a $\{-1,1\}^\w$-algebra.

A family $\A$ of subsets of a set $X$ is defined to be a {\em $\{-1,1\}^\w$-algebra} if for any sequence of sets $(A_n)_{n\in\w}\in\A^\w$ and any set  $\Omega\subset \{-1,1\}^\w$ the set
$$\Omega(A_n)_{n\in\w}=\bigcup_{f\in\Omega}\bigcap_{n\in \w}f(n)\cdot A_{n}$$belongs to $\A$. Here $1\cdot A_n=A_n$ and $(-1)\cdot A_n=X\setminus A_n$. The set $\Omega(A_n)_{n\in\w}$ will be called {\em the result of the $\Omega$-operation over the sequence} $(A_n)_{n\in\w}$.

The $\Omega$-operations generalize many known operations over sequences of sets.
In particular, for the sets $\Omega_i=\{f\in\{-1,1\}^\w:i\in f(\w)\}$, $i\in\{-1,1\}$, we get $\Omega_1((A_n)_{n\in\w})=\bigcup_{n\in\w}A_n$ and $\Omega_{-1}((A_n)_{n\in\w})=\bigcup_{n\in\w}(X\setminus A_n)$. This means that each $\{-1,1\}^\w$-algebra of subsets of a set $X$ is a $\sigma$-algebra of subsets of $X$.

The family of $\Omega$-operations includes also the classical Suslin $A$-operation
$$\bigcup_{\alpha\in\w^\w}\bigcap A_{\alpha|n}$$over a sequence $(A_s)_{s\in \w^{<\w}}$ of sets. Here $\w^{<\w}=\bigcup_{n\in\w}\w^n$ is the family of finite sequences of finite ordinals (which includes the restriction $\alpha|n$ of any function $\alpha\in\w^\w$ to a finite ordinal $n\in\w$).

To represent the Suslin $A$-operation as an $\Omega$-operation, take any bijection $\xi:\w^{<\w}\to \w$ and consider the set  $$\Omega=\bigcup_{\alpha\in\w^\w}\{\beta\in\{-1,1\}^\w:
\forall n\in\w\;\;\beta(\xi(\alpha|n))=1\}.$$ Observe that for any sequence $(A_s)_{s\in\w^{<\w}}$ of subsets of $X$ we get
$$\bigcup_{\alpha\in\w^\w}\bigcap_{n\in\w}A_{\alpha|n}=\Omega(B_n)_{n\in\w}$$where $B_n=A_{\xi^{-1}(n)}$ for all $n\in\w$.

\begin{proposition} For every compact Hausdorff space $K$ the family of subspaces with countable separation in $K$ is a $\{-1,1\}^\w$-algebra, containing all open subsets of $K$. Consequently, this family is a $\sigma$-algebra of subsets of $K$, which contains all Borel subsets of $K$ and is closed under Suslin $A$-operations.
\end{proposition}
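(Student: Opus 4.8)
The plan is to first reduce the extrinsic notion of countable separation (which quantifies over compactifications of the varying subspace $X$) to an \emph{intrinsic} separation property inside the fixed space $K$, and then to verify the algebra axioms by a single union-of-families trick. Since $K$ is compact Hausdorff, it is Tychonoff and has countable separation trivially: one may take $bK=K$, whose remainder $bK\setminus K$ is empty, so the separating condition is vacuous. Hence, applying the implication $(1)\Ra(2)$ of the preceding characterization with $Y=K$, and the implication $(3)\Ra(1)$ of that same characterization (again with $Y=K$, using that $K$ itself has countable separation), one gets that a subspace $X\subset K$ has countable separation \emph{if and only if} there is a countable family $\U_X$ of open subsets of $K$ separating the points of $X$ from the points of $K\setminus X$, i.e. for every $x\in X$ and $y\in K\setminus X$ some $U\in\U_X$ contains exactly one of $x,y$. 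Let $\A$ denote the family of all such subspaces.

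First I would observe that $\A$ contains every open set: if $X\subset K$ is open, then the one-element family $\{X\}$ already separates $X$ from $K\setminus X$, since $X$ contains each $x\in X$ and misses each $y\in K\setminus X$.

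The central step is to show that $\A$ is a $\{-1,1\}^\w$-algebra. Given a sequence $(A_n)_{n\in\w}\in\A^\w$ with separating families $\U_n$, and any $\Omega\subset\{-1,1\}^\w$, I would prove that the countable family $\U=\bigcup_{n\in\w}\U_n$ separates $X:=\Omega(A_n)_{n\in\w}$ from $K\setminus X$. The key point is that membership in $X$ is governed by a \emph{membership type}: assign to each $z\in K$ the sequence $\tau(z)\in\{-1,1\}^\w$ with $\tau(z)(n)=1$ iff $z\in A_n$. Then $z\in\bigcap_{n\in\w}f(n)\cdot A_n$ exactly when $f=\tau(z)$, so $X=\tau^{-1}(\Omega)$. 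Consequently, if $x\in X$ and $y\in K\setminus X$, then $\tau(x)\in\Omega$ while $\tau(y)\notin\Omega$, so $\tau(x)\ne\tau(y)$ and these sequences differ at some coordinate $n$, which means exactly one of $x,y$ lies in $A_n$. As $\U_n$ separates $A_n$ from $K\setminus A_n$, some $U\in\U_n\subset\U$ contains exactly one of $x,y$, as needed.

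Finally, the ``consequently'' clause is immediate from the facts already recorded in the paper: every $\{-1,1\}^\w$-algebra is a $\sigma$-algebra, so $\A$ is a $\sigma$-algebra; since it contains all open sets it therefore contains the whole Borel $\sigma$-algebra; and since the Suslin $A$-operation was exhibited above as a particular $\Omega$-operation, $\A$ is automatically closed under it. I expect the only delicate point to be the reformulation in the first paragraph—verifying that the backward direction really goes through requires invoking condition $(3)$ with $Y=K$ and hence the (trivial) fact that $K$ has countable separation. Once that intrinsic characterization is in place, the membership-type argument and the union $\bigcup_n\U_n$ dispose of the algebra property with no further work.
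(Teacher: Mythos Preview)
Your argument is correct and follows essentially the same approach as the paper: reduce to an intrinsic separation property inside $K$, take the union $\bigcup_n\U_n$ as the separating family, and for $x\in X$, $y\notin X$ locate a coordinate $n$ where their $A_n$-membership differs. Your explicit justification of the intrinsic reformulation via the preceding proposition (conditions $(1)\Leftrightarrow(3)$ with $Y=K$) and your use of the membership-type map $\tau$ make the argument a bit more transparent than the paper's version, which simply asserts the choice of $\U$ and picks $f\in\Omega$ with $x\in\bigcap_n f(n)\cdot A_n$, but the underlying idea is identical.
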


\begin{proof} It is clear that each open subspace of $K$ has countable separation. Let $(A_n)_{n\in\w}$ be a sequence of subspaces with countable separation in $K$. Choose a countable family $\U$ of open subsets of $K$ such that for any $n\in\w$ and any points $x\in A_n$, $y\in K\setminus A_n$ there exists a set $U\in\U$ containing exactly one point of the doubleton $\{x,y\}$. Given any subset $\Omega\subset\{-1,1\}^\w$ consider the result $A=\Omega(A_n)_{n\in\w}$ of the $\Omega$-operation over the sequence $(A_n)_{n\in\w}$. Given any points $x\in A$ and $y\in K\setminus A$, find a function $f\in\Omega$ such that $x\in\bigcap_{n\in\w}f(n)\cdot A_n\subset A$. Since $y\notin\bigcap_{n\in\w}f(n)\cdot A_n$, there exists $n\in\w$ such that $y\notin f(n)\cdot A_n$ and $x\in f(n)\cdot A_n$. By the choice of the family $\U$, there exists a set $U\in\U$ containing exactly one point of the doubleton $\{x,y\}$. This means that the countable family $\U$ separates the points of the sets $A$ and $K\setminus A$, and hence the space $A$ has countable separation.
\end{proof}

Let us also prove the following stability property of the class of spaces with countable separation.

\begin{proposition}\label{p:CS-map} Let $f:X\to Y$ be a continuous map between Tychonoff spaces.
\begin{enumerate}
\item If the space $X$ has countable separation, then for every subspace $Z\subset Y$ with countable separation the preimage $f^{-1}(Z)$ has countable separation.
\item If $Y$ has countable separation and $f$ is perfect, then the space $X$ has countable separation, too.
\end{enumerate}
\end{proposition}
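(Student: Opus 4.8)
The plan is to prove both statements by pulling a countable separating family back along a continuous map, using the elementary observation that if $g\colon A\to B$ is continuous and $W\subset B$ is open, then for any $a,a'\in A$ the preimage $g^{-1}(W)$ contains exactly one point of $\{a,a'\}$ whenever $W$ contains exactly one point of $\{g(a),g(a')\}$; this is immediate from $a\in g^{-1}(W)\Leftrightarrow g(a)\in W$. Thus pulling a separating family back along a continuous map again yields a separating family, provided the map carries the relevant pairs of points to pairs that the original family separates.

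For statement (1), I would set $W=f^{-1}(Z)$, so that $X\setminus W=f^{-1}(Y\setminus Z)$. Since $Z$ has countable separation, the equivalence $(1)\Leftrightarrow(2)$ of the preceding Proposition (applied to the inclusion $Z\subset Y$) yields a countable family $\V$ of open subsets of $Y$ separating the points of $Z$ and $Y\setminus Z$. I would then take $\U=\{f^{-1}(V):V\in\V\}$, a countable family of open subsets of $X$. For any $x\in W$ and $y\in X\setminus W$ we have $f(x)\in Z$ and $f(y)\in Y\setminus Z$, so some $V\in\V$ contains exactly one point of $\{f(x),f(y)\}$, whence $f^{-1}(V)$ contains exactly one point of $\{x,y\}$. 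Therefore $\U$ separates the points of $W$ and $X\setminus W$ inside $X$, which has countable separation; by the implication $(3)\Ra(1)$ of the preceding Proposition (with the ambient space taken to be $X$), the subspace $W=f^{-1}(Z)$ has countable separation.

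For statement (2), I would pass to Stone-\v{C}ech compactifications. As established in the proof of the preceding Proposition, countable separation of $Y$ can be witnessed by a countable family $\mathcal W$ of open subsets of $\beta Y$ separating the points of $Y$ and $\beta Y\setminus Y$. Let $\bar f\colon\beta X\to\beta Y$ be the continuous extension of $f$. The crucial point is that perfectness of $f$ guarantees $\bar f^{-1}(Y)=X$, equivalently $\bar f(\beta X\setminus X)\subset\beta Y\setminus Y$, a classical characterization of perfect maps between Tychonoff spaces. Granting this, I would form $\U=\{\bar f^{-1}(W):W\in\mathcal W\}$, a countable family of open subsets of $\beta X$. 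For any $x\in X$ and $u\in\beta X\setminus X$ we get $\bar f(x)=f(x)\in Y$ and $\bar f(u)\in\beta Y\setminus Y$, so some $W\in\mathcal W$ contains exactly one point of $\{\bar f(x),\bar f(u)\}$, and hence $\bar f^{-1}(W)$ contains exactly one point of $\{x,u\}$. Thus $\U$ separates the points of $X$ and $\beta X\setminus X$, witnessing that $X$ has countable separation.

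The main obstacle is precisely the topological input in statement (2): the identity $\bar f^{-1}(Y)=X$ for the Stone-\v{C}ech extension of a perfect map. This is the mechanism that sends the residue $\beta X\setminus X$ into $\beta Y\setminus Y$, so that every pair $(x,u)$ with $x\in X$ and $u$ a point added in the compactification is detected by the pulled-back family; without perfectness it fails in general. I would isolate and cite this fact (for instance from Engelking's \emph{General Topology}) rather than reprove it, since the rest of both parts reduces to the routine pullback computation recorded in the first paragraph.
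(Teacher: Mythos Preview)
Your proof is correct and rests on the same ingredients as the paper's: the characterization of countable separation from the preceding proposition, pulling back a separating family along a continuous map, and for part~(2) the classical fact that the Stone--\v{C}ech extension of a perfect map satisfies $\bar f^{-1}(Y)=X$.

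The packaging differs slightly. For part~(1) the paper works directly in $\beta X$: it combines a family $\U_X$ (separating $X$ from $\beta X\setminus X$) with the pullbacks $\bar f^{-1}(U)$ of a family separating $Z$ from $Y\setminus Z$, obtaining in one stroke a family in $\beta X$ separating $f^{-1}(Z)$ from its complement. You instead produce a separating family inside $X$ and then invoke the implication $(3)\Rightarrow(1)$ of the preceding proposition; this is more modular but amounts to the same computation. For part~(2) the paper is terser: having established~(1), it simply applies~(1) to the map $\bar f\colon\beta X\to\beta Y$ (whose domain $\beta X$, being compact, trivially has countable separation) with $Z=Y$, concluding immediately that $X=\bar f^{-1}(Y)$ has countable separation. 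Your direct argument unfolds exactly this application, so nothing is lost either way.
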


\begin{proof} Consider the (unique) extension of $f$ to a continuous map $\bar f:\beta X\to\beta Y$ to the Stone-\v Cech compactifications of the spaces $X,Y$.
\smallskip

1. If the space $X$ has countable separation, then some countable family $\U_X$ of open subsets of $\beta X$ separates the points of the sets $X$ and $\beta X\setminus X$. Assuming that a subspace $Z\subset Y$ has countable separation, we can find a countable family $\U_Z$ of open subsets of $\beta Y$ separating the  points of the sets $Z$ and $Y\setminus Z$. Then the countable family $\U
=\U_X\cup\{\bar f^{-1}(U):U\in\U_Z\}$ separates points of $f^{-1}(Z)$ and $\beta X\setminus f^{-1}(Z)$, witnessing that the space $f^{-1}(Z)$ has countable separation.
\smallskip

2. Assuming that $Y$ has countable separation and $f$ is perfect, we conclude that $X=\bar f^{-1}(Y)$. By the preceding statement, the space $X=\bar f^{-1}(Y)$ has countable separation.
\end{proof}

Example~\ref{e:sorg} and Theorems~\ref{t:P<=>S}, \ref{t:GD} imply:

\begin{example} The Sorgenfrey line fails to have countable separation.
\end{example}

Many examples of Tychonoff spaces without countable separation can be constructed using the following proposition.

\begin{proposition}\label{p:no-cs} If a Tychonoff space $(X,\tau)$ has cardinality $|X|\ge|\tau^\w|>\mathfrak c$, then $X$ contains a subspace $Y\subset X$ without countable separation.
\end{proposition}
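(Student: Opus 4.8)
The plan is to exploit the characterization of countable separation established above (the equivalence of its conditions $(1)$, $(2)$, $(3)$). By the implication $(1)\Ra(2)$ of that result, applied to the Tychonoff subspace $Y\subset X$ inside the ambient Tychonoff space $X$, if $Y$ has countable separation then there is a countable family $\U$ of open subsets of $X$ separating the points of the sets $Y$ and $X\setminus Y$. Consequently, to produce a subspace $Y$ \emph{without} countable separation it suffices to construct $Y\subset X$ such that \emph{no} countable family of open subsets of $X$ separates the points of $Y$ from those of $X\setminus Y$. This reduces the problem to a purely combinatorial diagonalization, and crucially the relevant families are indexed by $\tau^\w$, the very cardinal appearing in the hypothesis.

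First I would encode each countable family by a sequence $\U=(U_n)_{n\in\w}\in\tau^\w$ and attach to it the trace map $t_\U:X\to\mathcal P(\w)$, $t_\U(x)=\{n\in\w:x\in U_n\}$, together with the equivalence relation $x\sim_\U x'\Leftrightarrow t_\U(x)=t_\U(x')$. The two basic observations are: (a) $\U$ separates the points of $Y$ and $X\setminus Y$ precisely when no $\sim_\U$-class meets both $Y$ and $X\setminus Y$; and (b) since $t_\U$ takes values in $\mathcal P(\w)$, the relation $\sim_\U$ has at most $\mathfrak c$ equivalence classes. Thus defeating $\U$ amounts to forcing some $\sim_\U$-class to be split by $Y$.

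Next I would set $\lambda=|X|$ and $\kappa=|\tau^\w|$, so that $\lambda\ge\kappa>\mathfrak c$, and fix an enumeration $(\U_\alpha)_{\alpha<\kappa}$ of all sequences in $\tau^\w$ (this enumerates every countable family, up to the irrelevant choice of enumeration). I would then build by transfinite recursion on $\alpha<\kappa$ two families of pairwise distinct points $(a_\alpha)_{\alpha<\kappa}$ and $(b_\alpha)_{\alpha<\kappa}$ in $X$, destined for $Y$ and for $X\setminus Y$ respectively. At stage $\alpha$ the already-committed set $C_{<\alpha}=\{a_\beta:\beta<\alpha\}\cup\{b_\beta:\beta<\alpha\}$ has cardinality at most $2\cdot|\alpha|<\kappa\le\lambda$, so the uncommitted set $X\setminus C_{<\alpha}$ still has full cardinality $\lambda$. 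Since these $\lambda>\mathfrak c$ uncommitted points are distributed among the at most $\mathfrak c$ classes of $\sim_{\U_\alpha}$, the pigeonhole principle yields a single $\sim_{\U_\alpha}$-class containing two distinct uncommitted points $a_\alpha,b_\alpha$; I commit $a_\alpha$ to $Y$ and $b_\alpha$ to its complement. Finally I set $Y=\{a_\alpha:\alpha<\kappa\}$. Because all chosen points are pairwise distinct, $b_\alpha\notin Y$ for every $\alpha$, while $a_\alpha\in Y$ and $a_\alpha\sim_{\U_\alpha}b_\alpha$; hence no member of $\U_\alpha$ separates $a_\alpha$ from $b_\alpha$, so $\U_\alpha$ fails to separate $Y$ from $X\setminus Y$. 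As $\alpha$ ranges over all countable families, $Y$ has no countable separation.

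The main obstacle is the bookkeeping in the recursion: one must guarantee that at \emph{every} stage some common $\sim_{\U_\alpha}$-class still contains two \emph{uncommitted} points. This is exactly where both clauses of the hypothesis enter — $|X|\ge|\tau^\w|$ keeps the committed set of size $<\lambda$ (so the uncommitted remainder retains full cardinality $\lambda$), while $|\tau^\w|>\mathfrak c$ forces $\lambda>\mathfrak c$, which drives the pigeonhole against the at most $\mathfrak c$ trace-classes. The remaining points (pairwise distinctness of the $a_\alpha,b_\alpha$, and the verification that $\sim_\U$-equivalence is precisely non-separation by $\U$) are routine.
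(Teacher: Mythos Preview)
Your proposal is correct and follows essentially the same approach as the paper: both enumerate $\tau^\w$, run a transfinite recursion of length $\kappa=|\tau^\w|$ that at stage $\alpha$ uses pigeonhole (at most $\mathfrak c$ trace-classes versus $\ge\kappa>\mathfrak c$ uncommitted points) to select two fresh points indistinguishable by the $\alpha$th family, and take $Y$ to be one of the two resulting sequences. The only cosmetic difference is that you invoke the preceding characterization $(1)\Rightarrow(2)$ up front to reduce to countable families of open sets in $X$, whereas the paper argues directly against an arbitrary compactification $bX$ and implicitly performs the same reduction by intersecting the separating family with $X$.
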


 \begin{proof} Let $\kappa=|\tau^\w|$ and let $\{f_\alpha\}_{\alpha\in\kappa}$ be an enumeration of the set $\tau^\w$ of all countable sequences of open subsets of $X$.

For every $U\in\tau$ let $\chi_U:X\to\{0,1\}$ be the characteristic function of the set $U$ in $X$. For every $\alpha\in\kappa$ consider the function $\hat f_\alpha:X\to\{0,1\}^\w$ assigning to each point $x\in X$ the sequence $\big(\chi_{f_\alpha(n)}(x)\big)_{n\in\w}\in\{0,1\}^\w$. By transfinite induction we shall construct two transfinite sequences of points $(x_\alpha)_{\alpha\in\kappa}$ and $(y_\alpha)_{\alpha\in\kappa}$ in $X$ such that $\hat f_\alpha(x_\alpha)=\hat f_\alpha(y_\alpha)$, $x_\alpha\ne y_\alpha$ and $x_\alpha,y_\alpha\subset X\setminus\{x_\beta,y_\beta\}_{\beta<\alpha}$ for every $\alpha<\kappa$.

  Assume that for some $\alpha<\kappa$ the points $x_\beta,y_\beta$, $\beta<\alpha$, have been constructed. Consider the map $\hat f_\alpha:X\to\{0,1\}^\w$. Since $|X|\ge |\tau^\w|=\kappa>\mathfrak c=|\{0,1\}^\w|$, there is a point $z\in\{0,1\}^\w$ such that $|\hat f_\alpha^{-1}(z)|\ge \kappa$. Then we can choose two distinct points $x_\alpha,y_\alpha\in\hat f_\alpha^{-1}(z_\alpha)\setminus\{x_\beta,y_\beta\}_{\beta<\alpha}$. This completes the inductive step.

 We claim that the subspace $Y=\{y_\alpha\}_{\alpha\in \kappa}$ of $X$ has no countable separation. To derive a contradiction, consider any compactification $bX$ of the Tychonoff space $X$. Assuming that $X$ has countable separation, we can find a sequence $(U_n)_{n\in\w}$ of open sets in $bX$ separating the points of the sets $Y$ and $bX\setminus Y$. Find $\alpha\in\kappa$ such that $f_\alpha(n)=U_n\cap X$ for all $n\in\w$. The choice of the points $x_\alpha\in bX\setminus Y$ and $y_\alpha\in Y$ guarantees that they cannot be separated by the sequence $(U_n)_{n\in\w}$. This contradiction completes the proof.
 \end{proof}

Proposition~\ref{p:no-cs} will help us to construct a metrizable spaces without countable separation.

\begin{corollary} The class of game determined spaces contains a metrizable space that fails to have countable separation.
\end{corollary}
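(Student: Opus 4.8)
The plan is to produce a single metrizable space $X$ satisfying the cardinality hypothesis $|X|\ge|\tau^\w|>\mathfrak c$ of Proposition~\ref{p:no-cs}. The proposition then yields a subspace $Y\subset X$ without countable separation, and since $Y$ is a subspace of a metrizable space it is itself metrizable. As each metrizable space is strictly fragmentable and each regular strictly fragmentable space is game determined (by Theorem~\ref{t:game}), the space $Y$ is a metrizable game determined space without countable separation, which is precisely what the corollary asserts. So everything reduces to the right choice of $X$.

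The heart of the matter is choosing the underlying cardinal so that the hypothesis $|X|\ge|\tau^\w|$ can be met in ZFC. I would take $\kappa=\beth_\w$ and let $X=\kappa^\w$ be the generalized Baire space, that is, the countable power of the discrete space of cardinality $\kappa$, equipped with the ultrametric $d(x,y)=2^{-\min\{n\in\w:\,x(n)\ne y(n)\}}$ (and $d(x,y)=0$ for $x=y$). This is a metrizable space of weight $w(X)=\kappa$ and cardinality $|X|=\kappa^{\aleph_0}$. The decisive arithmetic fact, valid in ZFC precisely because the Beth hierarchy is built from the power operation, is that $\beth_\w^{\aleph_0}=2^{\beth_\w}$. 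Indeed,
$$2^{\beth_\w}=2^{\sum_{n\in\w}\beth_n}=\prod_{n\in\w}2^{\beth_n}=\prod_{n\in\w}\beth_{n+1}\le\prod_{n\in\w}\beth_\w=\beth_\w^{\aleph_0}\le\beth_\w^{\beth_\w}=2^{\beth_\w},$$
so all terms coincide and $|X|=\kappa^{\aleph_0}=2^{\kappa}=\beth_{\w+1}$.

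It then remains to check the two inequalities. For the upper bound I would use only the standard estimate $|\tau|\le 2^{w(X)}=2^{\kappa}$, which holds because every open set is a union of members of a base of size $w(X)=\kappa$; this gives $|\tau^\w|=|\tau|^{\aleph_0}\le(2^{\kappa})^{\aleph_0}=2^{\kappa}=|X|$, so $|X|\ge|\tau^\w|$. For the lower bound $|\tau^\w|>\mathfrak c$ I would note that $|\tau|\ge w(X)=\kappa=\beth_\w>\beth_1=\mathfrak c$, since $\tau$ is itself a base and hence cannot be smaller than the minimal cardinality of a base; therefore $|\tau^\w|\ge|\tau|>\mathfrak c$. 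Thus $|X|\ge|\tau^\w|>\mathfrak c$, Proposition~\ref{p:no-cs} applies, and the resulting metrizable subspace $Y\subset X$ completes the argument.

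The only genuinely delicate point is the cardinal identity $\beth_\w^{\aleph_0}=2^{\beth_\w}$, which is exactly what forces the two sides $|X|$ and $|\tau^\w|$ to match; the topological ingredients (metrizability of $\kappa^\w$, the bound $|\tau|\le 2^{w(X)}$, and $|\tau|\ge w(X)$) are routine. I expect no obstacle beyond ensuring that $\kappa$ is taken singular of cofinality $\w$: for a regular cardinal $\kappa>\mathfrak c$ one typically has $\kappa^{\aleph_0}=\kappa<2^{\kappa}$ (for instance $\kappa=\mathfrak c^{+}$), and then $|X|\le\kappa^{\aleph_0}$ would fall strictly below $|\tau^\w|$, so the matching would fail. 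The countable cofinality of $\beth_\w$, together with its definition by iterated exponentials, is what makes the two quantities equal in ZFC.
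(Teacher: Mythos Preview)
Your proof is correct and follows essentially the same route as the paper: both take $X=\beth_\w^\w$, use the identity $2^{\beth_\w}=\prod_{n}\beth_{n+1}\le\beth_\w^{\aleph_0}$ to match $|X|$ against $|\tau^\w|$, apply Proposition~\ref{p:no-cs}, and finish by observing that the metrizable subspace $Y$ is strictly fragmentable and hence game determined. Your write-up is slightly more detailed (explicit ultrametric, explicit lower bound $|\tau|\ge w(X)$, and the remark on why cofinality~$\w$ is essential), but the argument is the same.
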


\begin{proof} We recall that $\beth_\w=\sup_{n\in\w}\beth_n$ where $\beth_0=\w$ and $\beth_{n+1}=2^{\beth_n}$ for $n\ge 0$. Endow the cardinal $\beth_\w$ with the discrete topology and consider the completely metrizable space $X=(\beth_\w)^\w$. This spaces has weight $w(X)=\beth_\w>\beth_1=\mathfrak c$ and hence the topology $\tau$ of $X$ has cardinality
$$|\tau|\le 2^{\beth_\w}=2^{\sum_{n\in\w}\beth_n}=\prod_{n\in\w}2^{\beth_n}=
\prod_{n\in\w}\beth_{n+1}\le|\beth_\w^\w|$$
 and hence $\mathfrak c<|\tau^\w|\le|\beth_\w^\w|=|X|$. By Proposition~\ref{p:no-cs}, the space $X$ contains a subspace $Y$ without countable separation. The metrizable space $Y$ is strictly fragmentable and hence is game determined.
 \end{proof}

\begin{proposition} Each metrizable space $X$ of density $\le\mathfrak c$ has countable separation.
\end{proposition}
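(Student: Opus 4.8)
The plan is to realise $X$ as the preimage, under an injective continuous map into a separable metric space, of a subset of a Borel ($G_\delta$) set sitting inside a suitable compactification, and then to invoke Proposition~\ref{p:CS-map}(1) together with the fact that Borel subsets of compact Hausdorff spaces have countable separation. First, since $X$ is metrizable with $d(X)\le\mathfrak c$, its weight is $\le\mathfrak c$, so $X$ has a $\sigma$-discrete base $\bigcup_{n\in\w}\mathcal B_n$ with $\mathcal B_n=\{B_{n,i}\}_{i\in I_n}$ discrete, $|I_n|\le\mathfrak c$, and $\diam B_{n,i}<2^{-n}$. For each $n$ I would put $e_{n,i}(x)=d(x,X\setminus B_{n,i})$ and $r_n(x)=\sup_i e_{n,i}(x)$; by discreteness at most one $e_{n,i}$ is positive near each point, so every $r_n$ and $e_{n,i}$ is continuous and $e_{n,i}\le r_n$. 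The diagonal map $\iota=\big((r_n,(e_{n,i})_{i})\big)_{n}$ embeds $X$ into the compact cube $K=\prod_{n}\big([0,1]\times[0,1]^{I_n}\big)$; this is the standard embedding of a metric space into a countable power of hedgehogs, the extra radial coordinates $r_n$ fixing the topology at the centres. Let $K_0=\overline{\iota(X)}$ and keep the coordinate functions $r_n,e_{n,i}$ on $K$.

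Next, on $K$ set $E_n=\sup_{i}e_{n,i}$, a lower semicontinuous function, and let
\[
P=\{z\in K_0: E_n(z)=r_n(z)\ \text{for all }n\}=\bigcap_{n,k}\{z\in K_0: E_n(z)>r_n(z)-2^{-k}\}.
\]
Each set $\{E_n>r_n-2^{-k}\}=\bigcup_i\{e_{n,i}>r_n-2^{-k}\}$ is open, so $P$ is a $G_\delta$ (hence Borel) subset of $K_0$; by the Proposition showing that the subspaces with countable separation in a compact Hausdorff space form a $\sigma$-algebra containing all open sets, $P$ has countable separation. Since $e_{n,i}\le r_n$ on $K_0$, the inequality $E_n\le r_n$ holds on $K_0$, and the key block lemma (checked on one factor at a time) is that $E_n(z)=r_n(z)$ forces the $n$th block of $z$ to be a genuine hedgehog point, i.e. at most one coordinate $e_{n,i}(z)$ is positive and it then equals $r_n(z)$. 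On $\iota(X)$ one has $E_n=r_n$ for every $n$, so $\iota(X)\subseteq P$.

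Then I would choose for each $n$ an injective map $\theta\colon I_n\to(0,\pi/2)$ and define the $1$-Lipschitz ``ray map'' $\psi_n$ sending the block with distinguished spine $i$ and radius $t$ to $t(\cos\theta_i,\sin\theta_i)\in\IR^2$. On $P$ each block is a hedgehog point, so $h=(\psi_n)_n\colon P\to N:=h(P)\subseteq\prod_n\IR^2$ is well defined and continuous, and it is injective on $P$: the $n$th coordinate recovers, from the angle, the unique index $i$ with $e_{n,i}>0$ and, from the modulus, the value $r_n$, hence all coordinates of $z$. The target $N$ is separable metrizable. Now $\iota(X)=h^{-1}(h(\iota(X)))$ by injectivity of $h$ on $P$, the set $h(\iota(X))\subseteq N$ is separable metrizable and therefore has countable separation (a countable base of any metrizable compactification separates all pairs of distinct points), and $P$ has countable separation. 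Applying Proposition~\ref{p:CS-map}(1) to the continuous map $h\colon P\to N$ then yields that $\iota(X)=h^{-1}(h(\iota(X)))$, and hence $X$, has countable separation.

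The step I expect to be the main obstacle is exactly the injectivity of $h$. Globally on $K_0$ the map $h$ is \emph{not} injective: since the angles $\theta_i$ accumulate (there are $|I_n|\le\mathfrak c$ of them in a bounded interval), distinct spines are carried to rays that come arbitrarily close, so different limit points of $\iota(X)$ can share the same $h$-image. The device that circumvents this is the passage to the Borel set $P$: confining attention to points whose every block is an honest hedgehog point restores injectivity, while the lower semicontinuity of the $E_n$ guarantees that $P$ is $G_\delta$ and hence already has countable separation. Verifying the block lemma on a single hedgehog factor and checking that $\iota$ is genuinely an embedding are the remaining routine technical points.
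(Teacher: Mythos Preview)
Your argument is correct, and it follows the same underlying strategy as the paper's proof: find an overspace with countable separation and a continuous injection from that overspace into a separable metrizable space, then apply Proposition~\ref{p:CS-map}(1). The difference is purely one of packaging. The paper takes the metric completion $\bar X$ as the overspace (it is \v Cech-complete, hence has countable separation) and invokes Engelking's Theorem~4.4.9 as a black box to obtain a continuous injection $\bar X\to Y$ into a separable metrizable space $Y$; then $X=f^{-1}(f(X))$ and the proposition finishes. You instead embed $X$ into a compact cube via the hedgehog construction, carve out the $G_\delta$-set $P$ as your overspace, and build the continuous injection $h$ into $(\IR^2)^\w$ by hand using the ray maps $\psi_n$. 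Your ``block lemma'' and the injectivity of $h$ on $P$ are sound (the discreteness of each $\mathcal B_n$ does force at most one $e_{n,i}$ to be positive at any point of $K_0$, as a short net argument shows), so the construction goes through.

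What the paper's route buys is brevity: two lines instead of two pages, at the cost of an external citation. What your route buys is self-containment and an explicit picture of why density $\le\mathfrak c$ matters---it is exactly what lets you inject the index sets $I_n$ into an interval of angles so that the ray map $h$ is one-to-one. In effect you are reproving (a form of) the Engelking result the paper quotes. Neither approach is more general than the other here; they are the same proof at different levels of resolution.
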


\begin{proof} Let $\bar X$ be the completion of $X$ with respect to any fixed metric generating the topology of $X$. The completely metrizable space $\bar X$, being \v Cech complete, has countable separation. Theorem 4.4.9 \cite{En} implies that the metrizable space $\bar X$ of density $\le\mathfrak c$ admits a continuous injective map $f:\bar X\to Y$ to a metrizable separable space $Y$. Consider the image $Z=f(X)$ and observe that the metrizable separable space $Z$ has countable separation. By Proposition~\ref{p:CS-map}(1), the preimage $X=f^{-1}(Z)$ has countable separation.
\end{proof}

 \begin{problem} What is the smallest density of a metrizable space without countable separation? Is it equal to $\mathfrak c^+$? \footnote{This problem is discussed (but not solved) at (http://mathoverflow.net/questions/243064/what-is-the-smallest-density-of-a-metrizable-space-without-countable-separation).} \end{problem}

\section{Dense metrizable subsets in Piotrowski spaces}\label{s:DM}

In this section we construct dense (completely) metrizable subsets in Baire (Choquet) spaces which are strictly fragmentable, Piotrowski, or Stegall.
The following theorem (generalizing Proposition 6 of \cite{KKM}) should be known but we could not find a precise reference in the literature.

\begin{theorem} Each Baire strictly fragmentable space $X$ contains a metrizable dense $G_\delta$-set.
\end{theorem}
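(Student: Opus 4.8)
The plan is to extract from the strict fragmenting structure a dense $G_\delta$ subset on which the topology becomes metrizable. Since $X$ is strictly fragmentable, fix a metric $d$ on $X$ that strictly fragments $X$, meaning the $d$-topology is at least as strong as the original topology $\tau$. For each $\e>0$, let $\mathcal{U}_\e$ denote the union of all $\tau$-open subsets $U\subset X$ whose $d$-diameter satisfies $\diam_d(U)<\e$. The first step is to verify that each $\mathcal{U}_\e$ is dense and $\tau$-open in $X$. Openness is immediate from the definition as a union of open sets. For density, I would take an arbitrary non-empty open $W\subset X$ and apply strict fragmentability to the subspace $W$: it contains a non-empty relatively open subset $V\subset W$ with $\diam_d(V)<\e$; since $W$ is open in $X$, $V$ is $\tau$-open in $X$, so $V\subset\mathcal{U}_\e\cap W$, proving density.

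Next, set $G=\bigcap_{n\in\w}\mathcal{U}_{2^{-n}}$. Because $X$ is a Baire space and each $\mathcal{U}_{2^{-n}}$ is dense and open, $G$ is a dense $G_\delta$ subset of $X$. The heart of the argument is to show that on $G$ the original subspace topology $\tau|G$ coincides with the metric topology $d|G$. One inclusion is free: strict fragmentability gives that $d$ generates a topology finer than $\tau$, so $\tau|G\subset (d|G)$. For the reverse inclusion, I would argue that at every point $x\in G$ the original neighborhoods of $x$ already contain small $d$-balls around $x$. Indeed, fix $x\in G$ and $\e>0$, choose $n$ with $2^{-n}<\e$; since $x\in\mathcal{U}_{2^{-n}}$, there is a $\tau$-open $U\ni x$ with $\diam_d(U)<2^{-n}<\e$, whence $U\subset\{y:d(x,y)<\e\}$. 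Thus every $d$-ball about $x$ contains a $\tau$-neighborhood of $x$, giving $(d|G)\subset\tau|G$. Combining both inclusions shows $\tau|G=d|G$, so $G$ with its subspace topology is metrized by $d$, hence metrizable.

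The main obstacle I anticipate is precisely establishing that the two topologies agree on $G$ rather than merely on individual diameter-control sets. The subtlety is that strict fragmentability only guarantees small-diameter open sets \emph{exist} inside every subspace, but those sets need not be centered at a prescribed point; the construction of $G$ as a $G_\delta$ is exactly what repairs this, forcing each $x\in G$ to lie inside arbitrarily small-diameter $\tau$-open sets. One should be careful that the relevant open set $U$ is $\tau$-open in all of $X$ (not merely relatively open in some subspace) so that it qualifies as a genuine $\tau$-neighborhood of $x$; this is why density is verified against open $W\subset X$ and why $\mathcal{U}_\e$ is defined via $\tau$-open subsets of $X$. Once topological agreement on $G$ is secured, metrizability of $G$ and its being a dense $G_\delta$ follow immediately, completing the proof.
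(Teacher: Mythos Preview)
Your proof is correct and follows essentially the same approach as the paper's: intersect the dense open sets formed by unions of small-$d$-diameter $\tau$-open sets, use Baireness to get a dense $G_\delta$, and verify that on this set the $d$-topology coincides with $\tau$. Your version is in fact a slight streamlining of the paper's argument, which takes the extra (but unnecessary) step of choosing \emph{maximal disjoint} families $\U_n$ of small-diameter open sets and refining them to be nested before reaching the same conclusion that $\tau|G$ is generated by $d$.
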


\begin{proof} Let $d$ be a metric that strictly fragments the topology of $X$. For every $n\in\w$ let $\U_n$ be a maximal disjoint family of open subsets of $d$-diameter $<2^{-n}$. The choice of the metric $d$ guarantees that $\bigcup\U_n$ is dense in $X$. Replacing each family $\U_n$, $n\ge 1$ by the family $\{U\cap V:U\in\U_n,\;V\in\U_{n-1}\}$ we can assume that each set $U\in\U_n$ is contained in some set $V\in\U_{n-1}$. Since the space $X$ is Baire, the $G_\delta$-set $G=\bigcap_{n\in\w}\bigcup\U_n$ is dense in $X$. The choice of the families $\U_n$, $n\in\w$, guarantees that the topology on $G$ induced from $X$ is generated by the fragmenting metric $d$. Therefore, $G$ is a metrizable dense $G_\delta$-set in $X$.
\end{proof}

\begin{theorem}\label{t:Baire} Assume that a Tychonoff space $X$ is $\C$-Piotrowski for the class $\C$ of Baire metrizable spaces of density $\le w(X)$. The space $X$ contains a dense metrizable Baire subspace if and only if $X$ is Baire.
\end{theorem}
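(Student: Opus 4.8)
First I would dispose of the implication ``$X$ has a dense metrizable Baire subspace $\Rightarrow$ $X$ is Baire'', which uses neither metrizability nor the Piotrowski hypothesis. If $D\subseteq X$ is dense and Baire, then for any sequence $(U_n)_{n\in\w}$ of dense open subsets of $X$ the traces $U_n\cap D$ are dense open in $D$; since $D$ is Baire, $\bigcap_n(U_n\cap D)=D\cap\bigcap_nU_n$ is dense in $D$ and hence dense in $X$, so $\bigcap_nU_n$ is dense in $X$ and $X$ is Baire. I would record this as a one-line general fact.

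\textbf{The nontrivial direction.} For ``$X$ Baire $\Rightarrow$ dense metrizable Baire subspace'' the plan is to manufacture from the Piotrowski hypothesis a quasicontinuous map into $X$ whose continuity points project onto the subspace we want. Let $\kappa=w(X)$ and fix a compactification $bX$ with $w(bX)=\kappa$. By Lemma~\ref{l:CK-musco} the argmax multimap $\Phi:C(bX)\multimap bX$, $\Phi(f)=\{x:f(x)=\max f(bX)\}$, is minimal usco and sends open sets to open sets. Since $C(bX)$ is a Banach space it is completely metrizable, hence Baire, of density $w(bX)\le\kappa$, so $C(bX)\in\C$; moreover $\C$ is closed under open subspaces and dense Baire subspaces, so by Corollary~\ref{c:P<=>sP} the Tychonoff space $X$ is in fact \emph{strong} $\C$-Piotrowski. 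I would then pass to the subspace $Z=\{f\in C(bX):\Phi(f)\subseteq X\}$ of functions attaining their maximum only inside $X$. Granting that $Z$ is a dense Baire subspace of $C(bX)$ (so $Z\in\C$), Lemma~\ref{l:musco-rest} makes $\Phi|Z:Z\multimap X$ a minimal usco into $X$, every selection $\varphi:Z\to X$ is quasicontinuous by Lemma~\ref{l:select}, and strong $\C$-Piotrowskiness produces a comeager set $C\subseteq Z$ of continuity points of $\varphi$. By the argument of Theorem~\ref{t:P->S}, $\Phi|Z$ is single-valued at each point of $C$, so $\varphi|C=\Phi|C:C\to X$ is a genuine continuous function.

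\textbf{Finishing.} It then remains to verify that $M:=\varphi(C)$ is the desired subspace. Density of $M$ in $X$ should follow from upper semicontinuity: for open $W\subseteq bX$ meeting $X$ the set $\{f:\Phi(f)\subseteq W\}$ is open and, by the same reasoning that gives density of $Z$, meets the (dense) set $C$, placing a point of $M$ into $W$. That $M$ is Baire I would obtain from $C$ being comeager in the Baire space $Z$ together with $M$ being homeomorphic to $C$ via the single-valued, continuous, and — thanks to the openness half of Lemma~\ref{l:CK-musco} — open map $\varphi|C$; metrizability of $M$ is then transported from the metric of $C\subseteq C(bX)$ along this homeomorphism.

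\textbf{Main obstacles.} I expect the difficulty to lie entirely in the two ``granting'' clauses. The heart of the matter is showing that $Z=\{f:\Phi(f)\subseteq X\}$ is comeager (hence dense and Baire) in $C(bX)$ \emph{precisely because $X$ is Baire}: one must prove that for category-many $f$ the compact maximum set $\Phi(f)$ avoids $bX\setminus X$, and it is here that the Baire property of $X$, rather than mere density, has to be exploited. The example $X=\IQ\subseteq[0,1]$, which is dense but not Baire and for which $Z$ is meager, shows the hypothesis is genuinely needed. The second obstacle is upgrading ``continuous image'' to ``metrizable homeomorphic image'': since injectivity of $\varphi|C$ is not automatic, one must use that $\varphi|C$ is single-valued, continuous, \emph{and} open (the openness coming from Lemma~\ref{l:CK-musco}) to conclude that $M$ carries a compatible metric. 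Everything else is routine bookkeeping with Lemmas~\ref{l:CK-musco}, \ref{l:musco-rest}, \ref{l:select} and Corollary~\ref{c:P<=>sP}.
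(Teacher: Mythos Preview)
Your overall plan matches the paper's: compactify, use the argmax minimal usco $\Phi:C(\bar X)\multimap\bar X$, restrict to a Baire subspace $Z$ hitting $X$, and apply the Piotrowski hypothesis to a quasicontinuous selection $\varphi$. The gap is in extracting metrizability of $M=\varphi(C)$. Your claim that $\varphi|C$ is open does not follow from Lemma~\ref{l:CK-musco}: that lemma gives $\Phi(V)$ open in $\bar X$ for open $V\subset C(\bar X)$, but $\varphi(V\cap C)$ can be strictly smaller than $\Phi(V)\cap M$, since a point $y\in\Phi(V)\cap M$ has some witness $g\in V$ with $y\in\Phi(g)$ and some witness $h\in C$ with $y=\varphi(h)$, and nothing forces a common witness in $V\cap C$. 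More fundamentally, $\varphi|C$ is not injective --- distinct functions routinely share the same argmax --- so even were it open, you would only have an open continuous image of a metric space, which need not be metrizable. Neither Baireness nor metrizability of $M$ is established by your argument.

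The paper goes in the reverse direction: it builds a continuous map $\psi:G'\to C(\bar X)$ from a dense $G_\delta$-set $G'\subset\bar X$ back into $C(\bar X)$. This comes from an inductive (Zorn's lemma) construction of nested disjoint families $\mathcal H_n$ of small-diameter open sets in $C(\bar X)$ whose $\Phi$-images are pairwise disjoint and jointly dense in $\bar X$ at every level; each $x\in G'$ lies in a unique chain $(H_n)$, and $\psi(x)$ is the limit of points chosen in those $H_n$. The very same tree construction handles your first obstacle, proving that $Z=\{f:\Phi(f)\cap X\neq\emptyset\}$ is Baire (note: the paper uses $\Phi(f)\cap X\neq\emptyset$, not your $\Phi(f)\subseteq X$; the former is trivially dense and still admits a selection into $X$). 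The desired metrizable dense Baire subspace of $X$ is then $B'=G'\cap\Phi(D)$, on which $\psi$ is a homeomorphism onto its image in $C(\bar X)$.
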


\begin{proof} The ``only if'' part is trivial. To prove the ``if'' part, assume that the space $X$ is Baire. By \cite[2.3.23]{En}, the Tychonoff space $X$ has a compactification $\bar X$ of weight $w(\bar X)=w(X)$. Then the Banach space $C(\bar X)$ of all real-valued continuous functions on $\bar X$ has density $w(\bar X)=w(X)$.
By our assumption, every Baire subspace of $C(\bar X)$ belongs to the class $\C$.   Consider the multi-valued map $\Phi:C(\bar X)\multimap \bar X$ assigning to each function $f\in C(\bar X)$ the non-empty compact set $\Phi(f)=\{x\in \bar X:f(x)=\max f(\bar X)\}$.
By Lemma~\ref{l:CK-musco}, $\Phi$ is a minimal usco map such that for every open set $V\subset C(\bar X)$ the set $\Phi(V)$ is open in $\bar X$. It is easy to see that  $Z=\{f\in C(X):\Phi(f)\cap X\ne\emptyset\}$ is a dense subspace in $C(\bar X)$. Let $L\subset C(\bar X)$ be the largest open subset of $C(\bar X)$ such that the intersection $L\cap Z$ is meager in $C(\bar X)$. Find a sequence $(M_n)_{n\in\w}$ of nowhere dense subsets of $C(\bar X)$ such that $L\cap Z\subset\bigcup_{n\in\w}M_n$.


We are going to construct a sequence $(\mathcal H_n)_{n\in\w}$ of non-empty disjoint families of open sets in $C(\bar X)$ satisfying the following conditions:
\begin{enumerate}
\item $\bigcup_{H\in\mathcal H_n}\Phi(H)$ is dense in $\bar X$;
\item $\bigcup\mathcal H_n\subset C(\bar X)\setminus M_n$;
\item each set $H\in\mathcal H_n$ has diameter $\le 2^{-n}$ in the Banach space $C(\bar X)$;
\item for any distinct sets $H,H'\in\mathcal H_n$ the sets $\Phi(H)$ and $\Phi(H')$ are disjoint;
\item for every $H\in\mathcal H_{n}$ there exists a unique set $H'\in\mathcal H_{n-1}$ such that $\bar H\subset H'$;
\item if $L$ is not empty, then $L$ contains the closure of some set $H\in\mathcal H_0$.
\end{enumerate}
Here we assume that $\mathcal H_{-1}=\{C(\bar X)\}$. The construction of the sequence $(\mathcal H_n)_{n\in\w}$ is inductive. Assume that for some $n\ge 0$ a family $\mathcal H_{n-1}$ satisfying the condition (1) have been constructed. Using the Zorn Lemma, we can choose a maximal family $\mathcal H_n$ of non-empty open sets in $C(\bar X)$ satisfying the conditions (2)--(6). We claim that the family $\mathcal H_n$ satisfies the condition (1), too. Assuming that the union $\bigcup_{H\in\mathcal H_n}\Phi(H)$ is not dense in $\bar X$, we conclude that this union is disjoint with some non-empty open set $W\subset \bar X$. By the inductive assumption, the open set $\bigcup_{H\in\mathcal H_{n-1}}\Phi(H)$ is dense in $\bar X$. So, we can replace $W$ by a smaller open set and assume that $W\subset \Phi(H)$ for some set $H\in\mathcal H_{n-1}$. By Lemma~\ref{l:musco}, the minimality of the usco map $\Phi$ guarantees that $\Phi(J)\subset W$ for some non-empty open set $J\subset H$. Replacing $J$ by a smaller open set, we can additionally assume that $\diam(J)\le 2^{-n}$ and $\bar J\subset H\setminus M_n$. Then the family $\mathcal H_n\cup\{J\}$ satisfies the conditions (2)--(6), which contradicts the maximality of $\mathcal H_n$. This contradiction completes the proof of the condition (1) of the inductive construction.
\smallskip

After completing the inductive construction, consider the $G_\delta$-set $G=\bigcap_{n\in\w}\bigcup\mathcal H_n$ in $C(\bar X)$ and the dense $G_\delta$-set $G'=\bigcap_{n\in\w}\bigcup_{H\in\mathcal H_n}\Phi(H)$ in $\bar X$. We claim that for every $x\in G'$ there exists a  function $\psi_x\in G$ such that $x\in\Phi(\psi_x)$. Given any $x\in G'$, for every $n\in\w$ choose a function $h_n\in H_n$ with $x\in \Phi(h_n)$. The completeness of the Banach space $C(\bar X)$ and the conditions (3,5) imply that the sequence $(h_n)_{n\in\w}$ is Cauchy and hence it converges to a unique function $\psi_x\in \bigcap_{n\in\w}\bar H_n=\bigcap_{n\in\w}H_n\subset G$.
The upper semicontinuity of the usco map $\Phi$ guarantees that $x\in\Phi(\psi_x)$.
The conditions (3,4) imply that the map $\psi:G'\to G$, $\psi:x\mapsto \psi_x$, is continuous.

Now we can show that the dense subspace $Z$ of $C(\bar X)$ is Baire. In the opposite case, the set $L$ is not empty and by condition (6) $L$ contains the closure $\bar H_0$ of some set $H_0\in\mathcal H_0$. It follows that $\Phi(H_0)$ is a non-empty open set in $\bar X$. Since $X$ is a dense Baire subspace of $\bar X$, the intersection $X\cap G'\cap \Phi(H_0)$ contains some point $x$. Consider the function $\psi_x\in \bar H_0\cap G\subset L$ and observe that the conditions (2),(5) imply that $\psi_x\in L\setminus\bigcup_{n\in\w}M_n\subset L\setminus Z$, which is not possible as $x\in \Phi(\psi_x)$ and hence $\psi_x\in Z=\{z\in C(\bar X):\Phi(z)\cap X\ne\emptyset\}$. This contradiction shows that $L=\emptyset$ and the subspace $Z$ of $C(\bar X)$ is Baire.

For every $z\in Z$ choose any point $\varphi(z)\in\Phi(z)\cap X$. By Lemma~\ref{l:musco-rest}, the restriction $\Phi|Z:Z\multimap \bar X$ is a minimal usco and by Lemma~\ref{l:select} its selection $\varphi:Z\to X$ is a quasicontinuous function. Since $Z\in\C$ and the space $X$ is (strong) $\C$-Piotrowski, the map $\varphi$ is continuous at every point of some dense $G_\delta$-set $D\subset Z$. Repeating the proof of Theorem~\ref{t:P->S} (or using Lemma~\ref{l:musco}), we can show that $\Phi(z)=\{\varphi(z)\}$ for every $z\in D$.

 Now we show that the image $\Phi(D)$ is a dense Baire subspace of $X$. In the opposite case we could find a non-empty open set $U\subset \bar X$ such that the set $U\cap \Phi(D)$ is meager and hence is contained in the countable union  $\bigcup_{n\in\w}F_n$ of closed nowhere dense subsets of $\bar X$. Observe that the set $\tilde U=\{z\in C(\bar X):\Phi(z)\subset U\}$ is non-empty and open in $C(\bar X)$ (by the upper semicontinuity of $\Phi$).

The upper semicontinuity of the map $\Phi$ implies that for every $n\in\w$ the set $E_n=\{z\in \tilde U:\Phi(z)\cap F_n\ne\emptyset\}$ is closed in $\tilde U$.
We claim that this set is nowhere dense in $C(\bar X)$. Assuming the opposite, we can consider the interior $E^\circ_n$ of $E_n$ in $C(\bar X)$ and using Lemma~\ref{l:CK-musco}, conclude that $\Phi(E^\circ_n)$ is a non-empty open set in $\bar X$.
 Since the set $F_n$ is nowhere dense in $\bar X$, the complement $\Phi(E^\circ_n)\setminus F_n$ is not empty. Applying Lemma~\ref{l:musco}, find a non-empty open set $V\subset E_n^\circ$ such that $\Phi(V)\subset \Phi(E^\circ_n)\setminus F_n$. But this contradicts the inclusion $V\subset E_n=\{z\in \tilde U:\Phi(z)\cap F_n\ne\emptyset\}$. This contradiction shows that each the set $E_n$ is nowhere dense in $\tilde U$ and then the set $D\cap\tilde U\subset \bigcup_{n\in\w}E_n$ is meager in $C(\bar X)$, which is a contradiction, showing that the image $\Phi(D)$ is a dense Baire subspace of $X$.

 Since $G'$ is a dense $G_\delta$-set in $\bar X$, the intersection $B'=G'\cap \Phi(D)$ is a dense Baire subspace of $X$. Now consider the subspace $B=\psi(B')$ of $G$. Taking into account that the restriction $\Phi|D$ is single-valued, we conclude that $\varphi\circ\psi|B'$ is the identity map of $B'$, which means that $\psi:B'\to B$ is a homeomorphism and hence the dense Baire subspace $B'$ of $X$ is metrizable.
 \end{proof}

Next, we shall characterize Piotrowski spaces containing a dense completely metrizable subspaces and prove that those are exactly Choquet spaces (which are defined with the help of the classical Choquet game).

The {\em Choquet game} on a topological space $X$ is played by two players, $\alpha$ and $\beta$, who select consecutively non-empty open subsets of $X$. The player $\alpha$ starts the Choquet game selecting the set $U_0=X$. The player $\beta$ answers by choosing a non-empty open subset $V_0$ of $U_0$. At the $n$-th move the player $\alpha$ chooses a non-empty open set $U_n$ in the set $V_{n-1}$ given by $\beta$ at the $(n-1)$-th move, and then $\beta$ answers with  a non-empty open set $V_n\subset U_n$. At the end of the game the player $\alpha$ is declared the winner if $\bigcap_{n\in\w}U_n\ne\emptyset$. In the opposite case the player $\beta$ wins the game $BM(X)$.

By the classical result of Oxtoby \cite[8.11]{Ke95}, a topological space $X$ is Baire if and only if the player $\beta$ has no winning strategy in the Choquet game. A topological space $X$ is called {\em Choquet} if the player $\alpha$ has a winning strategy in the Choquet game on $X$. It is well-known (see e.g. \cite[8.33]{Ke95}) that a (metrizable) topological space $X$ is Choquet if (and only if) $X$ contains a dense completely metrizable subspace. By the following theorem, this characterization remains true also for Piotrowski spaces. A topological space is called {\em completely metrizable} if it is homeomorphic to a complete metric space.

\begin{theorem}\label{t:Choquet} Assume that a Tychonoff space $X$ is $\C$-Piotrowski for the class $\C$ of Baire metrizable spaces of density $\le w(X)$. The space $X$  contains a dense completely metrizable subspace if and only if $X$ is Choquet.
\end{theorem}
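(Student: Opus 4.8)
The plan is to run the whole construction of Theorem~\ref{t:Baire} and to upgrade its output from a dense metrizable \emph{Baire} subspace to a dense metrizable \emph{Choquet} one, after which the classical characterization \cite[8.33]{Ke95} of metrizable Choquet spaces finishes the proof. The ``only if'' part is routine. If $M\subset X$ is a dense completely metrizable subspace, then $M$ is Choquet, and any winning strategy of the player $\alpha$ in $BM(M)$ transfers to $BM(X)$ along the dense inclusion $M\subset X$: when $\beta$ plays a non-empty open $V\subset X$, the trace $V\cap M$ is a non-empty open move in $BM(M)$, the $M$-strategy answers with some open $U'\subset V\cap M$, and $\alpha$ plays any open $U\subset V$ with $U\cap M=U'$; since the $M$-run is won, $\emptyset\ne\bigcap_n U_n\cap M\subset\bigcap_n U_n$, so $\alpha$ wins $BM(X)$ and $X$ is Choquet.

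For the ``if'' part assume that $X$ is Choquet and fix a winning strategy $\sigma$ of $\alpha$ in $BM(X)$. I reuse the entire setup of Theorem~\ref{t:Baire}: a compactification $\bar X$ with $w(\bar X)=w(X)$, the Banach space $C(\bar X)$ of density $w(X)$, the minimal usco map $\Phi:C(\bar X)\multimap\bar X$ of Lemma~\ref{l:CK-musco} (for which $\Phi(V)$ is open for open $V$), and the dense subspace $Z=\{f\in C(\bar X):\Phi(f)\cap X\ne\emptyset\}$, which is Baire (and metrizable of density $\le w(X)$, hence in $\C$) by the argument of Theorem~\ref{t:Baire}. I carry out the same inductive construction of disjoint open families $(\mathcal H_n)_{n\in\w}$ in $C(\bar X)$ with images $\bigcup_{H\in\mathcal H_n}\Phi(H)$ dense in $\bar X$, with diameters $\le 2^{-n}$, pairwise disjoint images and nested closures, but I now attach to each $H\in\mathcal H_n$ an extra datum: a legal initial segment of a run of $BM(X)$ played by $\sigma$, whose $\alpha$-moves are non-empty open sets $U^H_0\supset\dots\supset U^H_n$ in $X$ with $U^H_k\subset\Phi(H_k)\cap X$, where $H_0\supset\dots\supset H_n=H$ is the branch of the tree ending at $H$. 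The inductive step stays feasible: given the prescribed $\sigma$-move $U\subset\Phi(H')\cap X$ above some $H'\in\mathcal H_{n-1}$, I choose an open $O\subset\bar X$ with $O\cap X=U$, use the minimality of $\Phi$ (Lemma~\ref{l:musco}) to find a non-empty open $J\subset H'$ with $\Phi(J)\subset O$, and shrink $J$ to diameter $\le 2^{-n}$ with closure inside $H'$, so that $J$ (carrying the one-step-longer $\sigma$-run) becomes a member of $\mathcal H_n$ exactly as in Theorem~\ref{t:Baire}.

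The limit objects are as before: $G=\bigcap_{n\in\w}\bigcup\mathcal H_n$ is a $G_\delta$-subset of the Banach space $C(\bar X)$, hence completely metrizable, $G'=\bigcap_{n\in\w}\bigcup_{H\in\mathcal H_n}\Phi(H)$ is a dense $G_\delta$-set in $\bar X$, and completeness of $C(\bar X)$ gives a continuous section $\psi:G'\to G$ with $x\in\Phi(\psi_x)$. Exactly as in Theorem~\ref{t:Baire}, the $\C$-Piotrowski property applied to the quasicontinuous selection of the minimal usco $\Phi|Z$ (Lemmas~\ref{l:musco-rest} and~\ref{l:select}) single-values $\Phi$ on a dense $G_\delta$-set $D\subset Z$, and through $\psi$ this yields a dense metrizable subspace $M=G'\cap\Phi(D)$ of $X$ together with a homeomorphism of $M$ onto a subspace of the completely metrizable space $G$. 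The new ingredient is that every branch of the tree now carries a full run of $BM(X)$ won by $\sigma$, so $\bigcap_n U^H_n\ne\emptyset$; reading these runs off the branches provides a winning strategy for $\alpha$ in $BM(M)$, so $M$ is Choquet. Being a dense metrizable Choquet space, $M$ contains by \cite[8.33]{Ke95} a dense completely metrizable subspace, which is then dense in $X$, as required.

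The hardest point is precisely the claim that the $\sigma$-decoration furnishes a genuine winning strategy for $\alpha$ on $M$, and this is where the Choquet hypothesis must do the work that the Baire argument of Theorem~\ref{t:Baire} never needs. One has to arrange the realization $\Phi(J)\subset O$ of each $\sigma$-move so that the functions populating $J$ attain their maxima on $U=O\cap X$ with oscillation outside $U$ tending to $0$ down the branch; then the branch-function $\psi_x$ is forced to attain its maximum at the non-empty captured set $\bigcap_n U^H_n\subset X$, which simultaneously keeps $\psi_x$ inside $Z$ and pins the captured point inside $M$, so that the $\sigma$-run of the branch becomes a winning run of the Choquet game transported to $M$. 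Reconciling this value control with the disjointness of the images and with the minimality constraint of $\Phi$ is the technical core of the argument; the density count $w(\bar X)=w(X)$ that guarantees membership in $\C$ is routine via \cite[2.3.23]{En}, as in Theorem~\ref{t:Baire}.
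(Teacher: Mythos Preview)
Your overall plan---rerun Theorem~\ref{t:Baire} with a Choquet decoration on the tree, conclude that the resulting dense metrizable $M$ is Choquet, and then apply \cite[8.33]{Ke95}---is coherent in outline, but the step ``$M$ is Choquet'' is not actually carried out, and as written it does not go through. The decoration you attach to each branch produces a $\sigma$-run of the Choquet game \emph{on $X$}, so the captured point lies in $X\cap\bigcap_n U_n^H\subset\bigcap_n\Phi(H_n)$; but there is no reason for that point to lie in $M=G'\cap\Phi(D)$, since you never forced the branch to land in $D$. More seriously, even granting that branches terminate in $M$, a family of successful runs indexed by branches is not a strategy: in $BM(M)$ the opponent $\beta$ may play an arbitrary open subset of $M$, and you have to respond inside it, whereas your tree only offers the sets $\Phi(H)\cap M$. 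Your final paragraph essentially acknowledges that this reconciliation (``value control'', ``oscillation outside $U$ tending to $0$'') is the real content and leaves it undone.

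The paper avoids this detour entirely. Instead of proving $M$ is Choquet, it folds \emph{two} conditions into the inductive construction of the families $\mathcal H_n$: the Choquet decoration (via an auxiliary map $e_n:\mathcal H_n\to\bar\tau$ recording $\beta$'s moves, with $\Phi(H_n)\subset U_n\subset e_n(H_n)\subset\Phi(H_{n-1})$ and $U_n$ the strategy's reply), \emph{and} the requirement $\bigcup\mathcal H_n\subset D_n$ for a fixed sequence of open dense sets $D_n$ with $Z\cap\bigcap_n D_n=D$. The Choquet decoration forces every $z\in G'=\bigcap_n\bigcup\mathcal H_n$ to satisfy $\Phi(z)\cap X\ne\emptyset$, i.e.\ $G'\subset Z$; the $D_n$-condition gives $G'\subset\bigcap_n D_n$, hence $G'\subset D$. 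Thus $\Phi$ is single-valued on $G'$ with values in $X$, the map $\varphi:G'\to G=\Phi(G')\subset X$ is a homeomorphism, and $G$ is directly a dense completely metrizable (being $G_\delta$ in $\bar X$ and homeomorphic to the $G_\delta$-set $G'$ of the Banach space) subspace of $X$. No appeal to \cite[8.33]{Ke95} and no Choquet game on an auxiliary subspace is needed. If you want to salvage your approach, the missing move is precisely to insert the $D_n$-condition into your tree; once $G'\subset D$, single-valuedness collapses the whole end-game and your ``$M$ is Choquet'' step becomes unnecessary.
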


\begin{proof} The ``only if'' part is well-known (see, e.g. \cite[5.3]{Rev04}). To prove the ``if'' part, assume that the $\C$-Piotrowski space $X$ is Choquet.

By \cite[2.3.23]{En}, the Tychonoff space $X$ has a compactification $\bar X$ of weight $w(\bar X)=w(X)$. Then the Banach space $C(\bar X)$ has density $w(\bar X)=w(X)$ and hence every Baire subspace of $C(\bar X)$ belongs to the class $\C$.
Consider the multi-valued map $\Phi:C(\bar X)\to \bar X$ assigning to each function $f\in C(\bar X)$ the non-empty compact set $\Phi(f)=\{x\in \bar X:f(x)=\max f(\bar X)\}$.
By Lemma~\ref{l:CK-musco}, $\Phi$ is a minimal usco map such that for every open set $V\subset C(\bar X)$ the set $\Phi(V)$ is open in $\bar X$.
The proof of Theorem~\ref{t:Baire} implies that $Z=\{z\in C(\bar X):\Phi(z)\cap X\ne\emptyset\}$ is a dense Baire subspace of the Banach space $C(\bar X)$, containing a dense relative $G_\delta$-set $D\subset Z$ such that the restriction $\Phi|D$ is single-valued.

Now our aim is to construct a completely metrizable subspace $G\subset D$ such that  the image $\Phi(G)$ is a dense completely metrizable subspace of $X$.
Find a decreasing sequence $(D_n)_{n\in\w}$ of dense open subsets of $C(\bar X)$ such that $Z\cap\bigcap_{n\in\w}D_n=D$.

Let $\mathcal P$ be the family of all decreasing sequences $(U_0,V_0,\dots,U_{n-1},V_{n-1})$ of non-empty open sets of $X$ with $U_0=X$.
Since the topological space $(X,\tau)$ is Choquet, the player $\alpha$ has a winning strategy in the Choquet game on $X$. This strategy is a map $\$:\mathcal P\to\tau$ assigning to each partial play $(U_0,V_0,\dots,U_{n-1},V_{n-1})\in\mathcal P$ a non-empty open subset $U_n$ of $V_{n-1}$ such that for any decreasing sequence $U_0\supset V_0\supset U_1\supset V_1\supset\dots$ of non-empty open sets with $U_0=X$ and $U_n=\$(U_0,V_0,\dots,U_{n-1},V_{n-1})$ for all $n\in\w$ the intersection $\bigcap_{n\in\w}U_n$ is not empty.

Let $\bar\tau$ denote the topology of the compact Hausdorff space $\bar X$. Let $\tilde{\;}:\tau\to\bar\tau$ be the map assigning to each open set $U\in\tau$ the largest open set $\tilde U=\bar X\setminus\overline{X\setminus U}$ of $\bar X$ such that $\tilde U\cap X=U$.

Let $\tilde{\mathcal P}$ be the family of all decreasing sequences $(U_0,V_0,\dots,U_{n-1},V_{n-1})$ of non-empty open sets of $\bar X$ with $U_0=\bar X$.
The strategy $\$$ of the player $\alpha$ in the Banach-Mazur game induces a map $\tilde\$:\tilde{\mathcal P}\to\bar\tau$ defined by $\tilde\$(U_0,V_0,\dots,U_{n-1},V_{n-1})=\tilde U_n$ where $U_n=\$(U_0\cap X,V_0\cap X,\dots,U_{n-1}\cap X,V_{n-1}\cap X)$.

We are going to construct a sequence $(\mathcal H_n)_{n\in\IN}$ of  disjoint families of non-empty open sets in $C(\bar X)$ and a sequence of maps $(e_n:\mathcal H_n\to\bar\tau)_{n\in\IN}$ such that for every $n\in\IN$ the following conditions are satisfied:
\begin{enumerate}
\item $\bigcup_{H\in\mathcal H_n}\Phi(H)$ is dense in $\bar X$;
\item $\bigcup\mathcal H_n\subset D_n$;
\item each set $H\in\mathcal H_n$ has diameter $\le 2^{-n}$ in the Banach space $C(\bar X)$;
\item for any distinct sets $H,H'\in\mathcal H_n$ the sets $\Phi(H)$ and $\Phi(H')$ are disjoint;
\item for every $H\in\mathcal H_{n}$ there exists a unique set $H_{n-1}\in\mathcal H_{n-1}$ such that $\bar H\subset H_{n-1}$;
\item for every decreasing sequence of open sets $(H_k)_{k=0}^n\in \prod_{k=0}^n\mathcal H_k$ we get $\Phi(H_n)\subset U_n\subset e_n(H_n)\subset\Phi(H_{n-1})$  where $U_0=\bar X$ and $U_k:=\tilde\$(U_0,e_1(H_1),\dots,U_{k-1},e_{k}(H_{k}))$ for all $0\le k\le n$.
\end{enumerate}
Here we assume that $\mathcal H_{0}=\{C(\bar X)\}$. The construction of the sequences $(\mathcal H_n)_{n\in\IN}$ and $(e_n)_{n\in\IN}$ is inductive. Assume that for some $n\in\IN$ we have constructed families $\mathcal H_{i}$, $i<n$, and maps $e_i:\mathcal H_i\to\bar\tau$, $i<n$, satisfying the conditions (1)--(6). Using the Zorn Lemma, we can choose a maximal family $\mathcal H_n$ of non-empty open sets in $C(\bar X)$ possessing a map $e_n:\mathcal H_n\to\bar\tau$ satisfying the conditions (2)--(6). We claim that the family $\mathcal H_n$ satisfies the condition (1), too. Assuming that the union $\bigcup_{H\in\mathcal H_n}\Phi(H)$ is not dense in $\bar X$, we conclude that this union is disjoint with some non-empty open set $W\subset \bar X$. By the inductive assumption, the open set $\bigcup_{H\in\mathcal H_{n-1}}\Phi(H)$ is dense in $\bar X$. So, we can replace $W$ by a smaller open set and assume that $W\subset \Phi(H_{n-1})$ for some set $H_{n-1}\in\mathcal H_{n-1}$. By the condition (5), there exists a
decreasing sequence of open sets $(H_k)_{k=0}^{n-2}\in\prod_{k=0}^{n-2}\mathcal H_k$ such that $H_{n-1}\subset H_{n-2}$. The inductive assumption (6) guarantees that for all $k\le n-1$ we get $\Phi(H_k)\subset U_k\subset e_k(H_k)\subset\Phi(H_{k-1})$ where $U_0=\bar X$ and $U_i:=\tilde\$(U_0,e_1(H_1),\dots,U_{i-1},e_{i}(H_{i}))$ for $0<i<n$. In particular, $W\subset \Phi(H_{n-1})\subset U_{n-1}$ and hence the open set  $$U_n=\tilde\$(U_0,e_1(H_1),\dots,U_{n-2},e_{n-1}(H_{n-1}),U_{n-1},W)\subset W$$is well-defined.
 By Lemma~\ref{l:musco}, there exists a non-empty open set $H_n\subset H_{n-1}$ such that $\Phi(H_n)\subset U_{n}$. Replacing $H_n$ by a smaller open set, we can additionally assume
that $\diam(H_n)\le 2^{-n}$ and $\bar H_n\subset H_{n-1}\cap D_n$. Observe that $\Phi(H_n)\subset  U_n\subset W\subset \Phi(H_{n-1})$. Then the family $\mathcal H_n\cup\{H_n\}$ and the map $e_n\cup\{(H,W)\}$ satisfies the conditions (2)--(6), contradicting the maximality of $\mathcal H_n$. This contradiction completes the proof of the condition (1) of the inductive construction.

Now consider the $G_\delta$-set $G'=\bigcap_{n\in\w}\bigcup\mathcal H_n\subset \bigcap_{n\in\w}D_n$ in $C(\bar X)$ and the dense $G_\delta$-set $G=\bigcap_{n\in\w}\bigcup_{H\in\mathcal H_n}\Phi(H)$ in the compact Hausdorff space $\bar X$. We claim that $G'\subset Z$. Given any point $z\in G'$, use the conditions (3,5) to find a unique  decreasing sequence $(H_n)_{n\in\w}\in\prod_{n\in\w}\mathcal H_n$ such that $\{z\}=\bigcap_{n\in\w}H_n=\bigcap_{n\in\w}\bar H_n$.
 Next, consider the sequence of open sets $(U_n)_{n\in\w}$ in $\bar X$ defined by $U_0=X$ and $U_i=\tilde\$(U_0,e_1(H_1),\dots,U_{i-1},e_i(H_i))$ for $i\in\IN$. The choice of the (winning) strategy $\$$ guarantees that the intersection $X\cap\bigcap_{i\in\w}U_i$ is not empty. So, we can choose a point $\varphi(z)\in X\cap \bigcap_{i\in\w}U_i$ and observe that $\varphi(z)\in \bigcap_{i\in\w}U_i\subset\bigcap_{i=1}^\infty\Phi(H_{i-1})$. For every $i\in\w$, choose a point $z_i\in H_i$ with $\varphi(z)\in\Phi(z_i)$. The conditions (3,5) imply   that the sequence $(z_i)_{i\in\w}$ converges to the point $z$. Now the upper semicontinuity of the map $\Phi$ guarantees that $\varphi(z)\in\Phi(z)$ and hence $z\in Z$. This means that $G'\subset Z\cap\bigcap_{n\in\w}D_n=D$ and hence $\Phi(z)=\{\varphi(z)\}$ for every $z\in G'$. Then $G=\Phi(G')=\varphi(G')$. The upper semicontinuity of the map $\Phi$ implies the continuity of the map $\varphi:G'\to G$. The conditions (3),(4),(5) imply that the map $\varphi$ is bijective and the inverse map $\varphi^{-1}:G\to G'$ is continuous. Now we see that the space $X$ contains the dense $G_\delta$-set $G$, homeomorphic to the completely metrizable space $G'$, which completes the proof.
 \end{proof}

\begin{corollary} A Piotrowski Tychonoff space $X$ is
\begin{enumerate}
\item Baire if and only if $X$ contains a dense metrizable Baire subspace.
\item Choquet if and only if $X$ contains a dense completely metrizable subspace.
\end{enumerate}
\end{corollary}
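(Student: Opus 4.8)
The plan is to deduce both statements directly from Theorems~\ref{t:Baire} and~\ref{t:Choquet}, whose hypotheses speak not about Piotrowski spaces but about the weaker notion of being $\C$-Piotrowski for the specific class $\C$ of Baire metrizable spaces of density $\le w(X)$. Thus the only thing to verify is that a Piotrowski Tychonoff space $X$ satisfies this restricted hypothesis, after which the two equivalences follow verbatim.

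First I would record the evident monotonicity of the $\C$-Piotrowski property in its parameter: if $\C'\subseteq\C$ are classes of Baire spaces and $X$ is $\C$-Piotrowski, then $X$ is also $\C'$-Piotrowski, since the defining condition of Definition~\ref{d:piotr} now quantifies over fewer test spaces, so it becomes easier to satisfy. By the last item of Definition~\ref{d:piotr}, a Piotrowski space is by definition $\C$-Piotrowski for the class of \emph{all} Baire topological spaces. Every Baire metrizable space of density $\le w(X)$ is in particular a Baire topological space, so the class $\C$ of such spaces is a subclass of the class of all Baire spaces. Applying the monotonicity to this inclusion, a Piotrowski Tychonoff space $X$ is automatically $\C$-Piotrowski for the class $\C$ appearing in the hypotheses of Theorems~\ref{t:Baire} and~\ref{t:Choquet}.

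With this observation in hand the corollary is immediate. Statement (1) is precisely the conclusion of Theorem~\ref{t:Baire} applied to $X$: under the verified hypothesis, $X$ contains a dense metrizable Baire subspace if and only if $X$ is Baire. Likewise, statement (2) is the conclusion of Theorem~\ref{t:Choquet}: $X$ contains a dense completely metrizable subspace if and only if $X$ is Choquet.

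There is essentially no obstacle to overcome here, as the entire mathematical content resides in the two preceding theorems; the corollary merely specializes them by passing from the global Piotrowski property to the restricted $\C$-Piotrowski property that those theorems require. The one point worth stating explicitly—and the only step that is not a literal citation—is the monotonicity of $\C$-Piotrowski in $\C$ together with the harmless enlargement of the density bound to $\le w(X)$, which guarantees that all relevant test spaces built from $C(\bar X)$ in the proofs of Theorems~\ref{t:Baire} and~\ref{t:Choquet} indeed lie in the admissible class.
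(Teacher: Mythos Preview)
Your argument is correct and is exactly the intended derivation: the paper states this corollary without proof, leaving it as the immediate specialization of Theorems~\ref{t:Baire} and~\ref{t:Choquet} via the trivial monotonicity of the $\C$-Piotrowski property in $\C$. There is nothing to add.
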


\begin{problem} Does each weak Piotrowski Choquet Tychonoff space contain a dense metrizable subspace?
\end{problem}

A partial answer to this problem is given by the following proposition, proved in  \cite[Proposition 6]{KKM}.

\begin{proposition}[Kenderov-Kortezov-Moors] Every weak Piotrowski Choquet regular space $X$ contains a dense first-countable subspace.
\end{proposition}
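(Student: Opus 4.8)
The plan is to manufacture, from a winning strategy of $\alpha$ in the Choquet game, a quasicontinuous map $f$ from a completely metrizable space into $X$ whose continuity points are dense, and then to read off a countable neighbourhood base at the image of each continuity point. Fix a winning strategy $\$$ for $\alpha$ in the Choquet game on $X$. First I would build a \emph{sieve}: a tree $T$ whose nodes $t$ carry non-empty open sets $W_t\subseteq X$ with the following properties. The root carries $X$ (which is $\alpha$'s forced first move $U_0=X$); the children of any node $t$ form a maximal disjoint family of non-empty open sets contained in the current response of $\$$, with $\overline{W_c}\subseteq W_t$ for every child $c$ of $t$ (strict closure nesting, using the regularity of $X$); and the branching is arranged so that along every branch $s$ the regions $W_{s|n}$ realise a play in which $\alpha$ follows $\$$, so that $\bigcap_n W_{s|n}\ne\emptyset$ by the winning condition. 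By maximality the union of the children of $t$ is dense in $W_t$, and, refining a base of $X$ at each node, I would also secure the fineness condition $(\star)$: \emph{for every node $t$ and every open $W$ meeting $W_t$ there is a descendant $t'\supseteq t$ with $\overline{W_{t'}}\subseteq W$.}

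Let $S$ be the set of branches of $T$, topologised as a closed subspace of a product $\prod_n D_n$ of discrete index sets; then $S$ is a complete metric space, hence a member of the class $\C$ for which $X$ is weak Piotrowski. Choosing $f(s)\in\bigcap_n W_{s|n}$ defines a map $f:S\to X$, and strict closure nesting gives $f(s)\in W_{s|n}$ for all $n$. I would then check that $f$ is quasicontinuous: given $s$, an open $O\ni f(s)$ and a basic neighbourhood $[s|n]$, the set $O$ meets $W_{s|n}$, so $(\star)$ furnishes a descendant $t'\supseteq s|n$ with $\overline{W_{t'}}\subseteq O$, whence $f([t'])\subseteq\overline{W_{t'}}\subseteq O$ with $\emptyset\ne[t']\subseteq[s|n]$. (Alternatively, $(\star)$ together with Lemma~\ref{l:musco} shows that $s\mapsto\bigcap_n\overline{W_{s|n}}$ is minimal usco, so the quasicontinuity of $f$ follows from Lemma~\ref{l:select}.) Applying the weak Piotrowski property of $X$ on each non-empty clopen set $[t]\subseteq S$ (itself completely metrizable, hence in $\C$) produces a continuity point of $f$ inside $[t]$; as $[t]$ is open, such a point is a continuity point of $f$, so $C(f)$ meets every $[t]$ and is therefore dense in $S$ (this is Proposition~\ref{p:Pgood}(1)).

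Now set $Y:=f(C(f))$; I claim that $Y$ is a dense first-countable subspace of $X$. For density, given a non-empty open $O\subseteq X$ I would apply $(\star)$ at the root to obtain a node $t$ with $\overline{W_t}\subseteq O$; then the density of $C(f)$ yields some $s\in C(f)\cap[t]$, and $f(s)\in W_{s|n}\subseteq W_t\subseteq O$ places a point of $Y$ in $O$. For first-countability, fix $x_0=f(s_0)$ with $s_0\in C(f)$; I claim that $\{W_{s_0|n}\cap Y\}_{n\in\w}$ is a neighbourhood base at $x_0$ in $Y$. Indeed $x_0\in W_{s_0|n}$ for all $n$, and given an open $O\ni x_0$ the continuity of $f$ at $s_0$ provides an $n$ with $f([s_0|n])\subseteq O$. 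Finally, any $y=f(s')\in W_{s_0|n}\cap Y$ (with $s'\in C(f)$) also satisfies $y\in W_{s'|n}$, and since distinct level-$n$ sets of the sieve are pairwise disjoint this forces $s'|n=s_0|n$, i.e. $s'\in[s_0|n]$; hence $y\in f([s_0|n])\subseteq O$. Thus $W_{s_0|n}\cap Y\subseteq O$, proving the claim and with it the first-countability of $Y$.

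The technical heart, and the step I expect to be the main obstacle, is the simultaneous sieve construction: realising $\$$-consistency (so that the branch intersections are non-empty and lie in $X$) at the same time as the disjoint, dense-union, strictly nested children \emph{and} the fineness condition $(\star)$, all kept compatible level by level. Once $(\star)$ is in place the remaining arguments are routine; its role is precisely to convert continuity of $f$ at a point into honest open neighbourhoods $W_{s_0|n}$ of $X$, while the level-disjointness is what makes these trace out a countable base on the dense subspace $Y$.
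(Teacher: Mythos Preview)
The paper does not give its own proof of this proposition; it merely cites \cite[Proposition~6]{KKM}. Your outline is on the right track, but the sieve construction as stated cannot be carried out. You require the children of a node $t$ to be simultaneously (i) a maximal \emph{disjoint} family, so that distinct level-$n$ sets are pairwise disjoint (this is what drives your first-countability argument), and (ii) contained in $\alpha$'s current response $R_t:=\$(\dots)\subseteq W_t$. But then the children's union is dense only in $R_t$, not in $W_t$ as you assert, and since \emph{every} descendant of $t$ lies inside $R_t$, any open $W$ that meets $W_t\setminus\overline{R_t}$ has no descendant with closure inside it; so $(\star)$ fails outright. ``Refining a base at each node'' cannot help: the strategy $\$$ is fixed in advance and nothing prevents its responses from being tiny compared with $\beta$'s moves (think of $\$$ returning a small ball). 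In short, disjoint level families and the fineness condition $(\star)$ are in general incompatible, and without $(\star)$ both the quasicontinuity of $f$ and the density of $Y$ collapse.

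The repair is to abandon disjointness and let the children of $t$ be \emph{all} non-empty open sets $V$ with $\overline V\subseteq R_t$. The branch space $S$ remains a closed subspace of a countable product of discrete sets, hence completely metrizable; and quasicontinuity is now immediate (given $O\ni f(s)$ and $n$, the set $O$ meets $R_{s|n}\ni f(s)$, so by regularity some child $c$ of $s|n$ has $\overline{W_c}\subseteq O$, whence $f([c])\subseteq W_c\subseteq O$). The first-countability step must then be rewritten without disjointness: note that $f([s_0|n])$ is \emph{dense} in $R_{s_0|n}$, since every non-empty open subset of $R_{s_0|n}$ contains (by regularity) the label of some child, and any branch through that child yields a value of $f$ there. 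Hence for $s_0\in C(f)$ and open $O\ni f(s_0)$, choose $O'$ with $f(s_0)\in O'\subseteq\overline{O'}\subseteq O$; continuity gives $n$ with $f([s_0|n])\subseteq O'$, and density forces $R_{s_0|n}\subseteq\overline{O'}\subseteq O$. Thus $\{R_{s_0|n}\}_{n\in\w}$ is a countable neighbourhood base at $f(s_0)$ in $X$ itself, and $Y=f(C(f))$ is the required dense first-countable subspace.
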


Combining Theorems~\ref{t:Baire} and \ref{t:Choquet} with Theorem~\ref{t:P<=>S}, we obtain the following generalizations of the classical results of Stegall \cite{St87}, \cite{St91} and \v Coban, Kenderov \cite{CK}.

\begin{theorem} Let $X$ be a game determined Tychonoff space.
\begin{enumerate}
\item If $X$ is Baire and fragmentable, then $X$ contains a dense metrizable $G_\delta$-subset.
\item If $X$ is Baire and $\C$-Stegall for the class $\C$ of Baire metrizable spaces of density $\le w(X)$, then $X$ contains a dense metrizable Baire subspace.
\item If $X$ is Choquet and $\C$-Stegall for the class $\C$ of Baire metrizable  spaces of density $\le w(X)$, then $X$ contains a dense completely metrizable subspace.
\end{enumerate}
\end{theorem}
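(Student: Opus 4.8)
The plan is to obtain each of the three statements by routing the hypotheses through the machinery already assembled: Theorem~\ref{t:sf<=>f+gd} for part~(1), and the equivalence between $\C$-Stegall and $\C$-Piotrowski spaces furnished by Theorem~\ref{t:P<=>S}, together with Theorems~\ref{t:Baire} and \ref{t:Choquet}, for parts~(2) and~(3). The only point requiring genuine verification is that the class $\C$ of Baire metrizable spaces of density $\le w(X)$ is closed under taking dense $G_\delta$-subspaces, since this is precisely the standing assumption under which Theorem~\ref{t:P<=>S} is stated.

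For part~(1) I would argue directly. The space $X$ is Tychonoff, hence regular, and by assumption it is fragmentable and game determined. Theorem~\ref{t:sf<=>f+gd} then guarantees that $X$ is in fact strictly fragmentable. Since $X$ is moreover Baire, the earlier theorem of this section asserting that each Baire strictly fragmentable space contains a metrizable dense $G_\delta$-subset yields the desired dense metrizable $G_\delta$-set, completing part~(1).

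For parts~(2) and~(3) I would first establish the closure property of $\C$. Let $C\in\C$ and let $Y\subset C$ be a dense $G_\delta$-subset. As a subspace of a metrizable space, $Y$ is metrizable; being a dense $G_\delta$-subset of the Baire space $C$, it is itself Baire; and since density and weight coincide for metrizable spaces while weight is monotone under passage to subspaces, we obtain $d(Y)=w(Y)\le w(C)=d(C)\le w(X)$. Hence $Y\in\C$, so $\C$ is closed under dense $G_\delta$-sets. Now Theorem~\ref{t:P<=>S} applies: as $X$ is a game determined Tychonoff space that is $\C$-Stegall, it is $\C$-Piotrowski. For part~(2), $X$ is additionally Baire, so Theorem~\ref{t:Baire} provides a dense metrizable Baire subspace; for part~(3), $X$ is Choquet, so Theorem~\ref{t:Choquet} provides a dense completely metrizable subspace.

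The argument is essentially a matter of feeding the hypotheses into the correct implications, with no fresh construction needed. The one place where care is required---and the only conceivable obstacle---is confirming that passing to a dense $G_\delta$-subspace keeps a space inside $\C$; here the load-bearing facts are that the Baire property is inherited by dense $G_\delta$-subsets and that, for metrizable spaces, density equals weight, so the bound $d(Y)\le w(X)$ is preserved.
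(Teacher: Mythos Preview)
Your proposal is correct and matches the paper's intended approach: the paper simply states that the theorem follows by ``combining Theorems~\ref{t:Baire} and \ref{t:Choquet} with Theorem~\ref{t:P<=>S},'' and you have filled in exactly those details, including the verification that the class $\C$ is closed under dense $G_\delta$-subspaces (which Theorem~\ref{t:P<=>S} requires) and the observation that part~(1) goes through Theorem~\ref{t:sf<=>f+gd} and the preceding result on Baire strictly fragmentable spaces to obtain the $G_\delta$ conclusion.
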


\begin{corollary} A game determined Stegall Tychonoff space is
\begin{enumerate}
\item Baire if and only if $X$ contains a dense metrizable Baire subspace.
\item Choquet if and only if $X$ contains a dense completely metrizable subspace.
\end{enumerate}
\end{corollary}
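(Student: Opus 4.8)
The plan is to derive this corollary directly from the immediately preceding theorem, with only the two reverse implications requiring separate (and standard) justification. The first observation I would make is a monotonicity remark: if $X$ is Stegall, i.e.\ $\C_0$-Stegall for the class $\C_0$ of \emph{all} Baire spaces, then $X$ is automatically $\C$-Stegall for every subclass $\C\subset\C_0$, since a single-valuedness conclusion demanded for all domains $C\in\C_0$ holds a fortiori for all domains $C$ in the smaller class $\C$. In particular $X$ is $\C$-Stegall for the class $\C$ of Baire metrizable spaces of density $\le w(X)$, which is exactly the hypothesis on $\C$ appearing in parts (2) and (3) of the preceding theorem. Thus a game determined Stegall Tychonoff space meets those hypotheses as soon as it is, respectively, Baire or Choquet.

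For the forward (substantive) implications I would then simply invoke the preceding theorem: assuming $X$ is Baire, part (2) yields a dense metrizable Baire subspace, proving one direction of (1); assuming $X$ is Choquet, part (3) yields a dense completely metrizable subspace, proving one direction of (2). No new construction is needed here, as the hard work is already encapsulated in that theorem.

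The reverse implications rest on two well-known heredity facts about dense subspaces. For (1), if $X$ contains a dense Baire subspace $D$, then $X$ is Baire: for dense open sets $U_n\subset X$ the traces $U_n\cap D$ are dense open in $D$, so $\bigcap_{n\in\w}(U_n\cap D)$ is dense in $D$ and hence in $X$, forcing $\bigcap_{n\in\w}U_n$ to be dense in $X$. For (2), a completely metrizable subspace is Choquet, and if $X$ contains a dense Choquet subspace $D$, then $X$ is Choquet: a winning strategy of $\alpha$ on $D$ lifts to $X$ by letting $\alpha$ answer each move $V_{n-1}$ of $\beta$ with an open set $U_n\subset V_{n-1}$ of $X$ whose trace $U_n\cap D$ is the set prescribed by the $D$-strategy applied to $V_{n-1}\cap D$; since then $\bigcap_{n\in\w}(U_n\cap D)\ne\emptyset$, also $\bigcap_{n\in\w}U_n\ne\emptyset$, so $\alpha$ wins on $X$.

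I do not expect a genuine obstacle, as the corollary is essentially a repackaging of the preceding theorem together with classical facts. The only point requiring mild care is the strategy-lifting argument in the reverse direction of (2): one must check that the density of $D$ makes each trace $V_{n-1}\cap D$ a nonempty open subset of $D$, so that the lifted strategy is well-defined and genuinely winning. This verification is routine and may alternatively be cited from the standard theory of the Choquet game.
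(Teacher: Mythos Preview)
Your proposal is correct and matches the paper's intended derivation: the corollary is stated without proof immediately after the preceding theorem, and your reduction---Stegall implies $\C$-Stegall for the smaller class $\C$ of Baire metrizable spaces of density $\le w(X)$, then apply parts (2) and (3) of that theorem---is exactly the intended route. The reverse implications you supply are the same standard facts the paper invokes earlier (the ``only if'' parts of Theorems~\ref{t:Baire} and~\ref{t:Choquet}, where the Choquet direction is cited from \cite[5.3]{Rev04}).
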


\section{Stability properties of the classes of (strong) $\C$-Stegall spaces}

In this section we discuss stability properties of the classes of (strong) $\C$-Stegall spaces.

\begin{proposition} Let $\C$ be a class of Baire spaces closed under taking open subspaces.
A topological space $X$ is (strong) $\C$-Stegall if one of the following conditions is satisfied:
\begin{enumerate}
\item $X$ is a subspace of a (strong) $\C$-Stegall space.
\item $X$ is a countable union of closed (strong) $\C$-Stegall subspaces of $X$.
\item $X$ is the image of a (strong) $\C$-Stegall space $Z$ under a perfect map $f:Z\to X$;
\item $X$ admits a continuous bijective map $f:X\to Y$ onto a (strong) $\C$-Stegall space $Y$;
\item each non-empty subspace $Y\subset X$ contains a non-empty relatively open subspace $Z\subset Y$ which is (strong) $\C$-Stegall.
\end{enumerate}
\end{proposition}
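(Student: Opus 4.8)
The plan is to establish each of the five stability properties by reducing it to the defining property of (strong) $\C$-Stegall spaces, namely that every minimal usco map $\Phi:C\multimap X$ from $C\in\C$ is single-valued at some point (resp. at all points of a comeager set). Throughout I will use Lemma~\ref{l:musco} and Lemma~\ref{l:musco-rest} to transport minimality along restrictions, and the hypothesis that $\C$ is closed under open subspaces together with Proposition~\ref{p:Sgood} when I need the single-valuedness set to be not just non-empty but dense.

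First I would treat (1) and (4) together, since both amount to transporting a minimal usco map through a map into a known Stegall space. For (1), given an inclusion $X\subset X'$ with $X'$ a (strong) $\C$-Stegall space and a minimal usco $\Phi:C\multimap X$, I would compose with the inclusion to regard $\Phi$ as a map $C\multimap X'$; since $X$ carries the subspace topology and compact subsets of $X$ are compact in $X'$, this composite is again usco, and Lemma~\ref{l:musco} shows it remains minimal (the open-set condition is preserved because open sets of $X$ are traces of open sets of $X'$). Single-valuedness in $X'$ is the same as single-valuedness in $X$, so the conclusion transfers. For (4), given a continuous bijection $f:X\to Y$ with $Y$ Stegall, I would push a minimal usco $\Phi:C\multimap X$ forward to $f\circ\Phi:C\multimap Y$; continuity of $f$ makes this usco, minimality is checked via Lemma~\ref{l:musco} using that preimages of open sets are open, and since $f$ is injective, $f\circ\Phi$ is single-valued at $z$ exactly when $\Phi$ is.

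Next I would handle (3) and (5). For (3), with a perfect surjection $f:Z\to X$ from a Stegall space $Z$ and a minimal usco $\Phi:C\multimap X$, the idea is to form the ``pullback'' usco $\Psi:C\multimap Z$ with $\Psi(c)=f^{-1}(\Phi(c))$; perfectness guarantees $\Psi(c)$ is compact and nonempty, and upper semicontinuity is inherited. I would then pass to a \emph{minimal} usco $\Psi'\subset\Psi$ (every usco contains a minimal one), apply the Stegall property of $Z$ to get single-valuedness of $\Psi'$ on the relevant set, and argue that single-valuedness of $\Psi'$ at $z$ forces $\Phi$ to be single-valued at $z$ as well, using minimality of $\Phi$ and Lemma~\ref{l:musco} to rule out $\Phi(z)$ being larger than $f(\Psi'(z))$. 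For (5), which is a localization principle, I would run the (strict) fragmenting-type argument familiar from Theorem~\ref{t:F->S}: for a minimal usco $\Phi:C\multimap X$, the collection of open $U\subset C$ on which $\Phi(U)$ lands inside a relatively open Stegall piece is, by the hypothesis and Lemma~\ref{l:musco}, dense in $C$, and on each such $U$ the restriction $\Phi|U$ (minimal by Lemma~\ref{l:musco-rest}, with $U\in\C$ by closure under open subspaces) is single-valued on a dense/comeager set; a Baire-category pasting over a maximal disjoint family then yields a comeager single-valuedness set in $C$.

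The main obstacle I expect is property (2), the countable closed union, because single-valuedness is not an obviously additive condition over a decomposition of the \emph{target}. Given $X=\bigcup_{n\in\w}X_n$ with each $X_n$ closed and Stegall, and a minimal usco $\Phi:C\multimap X$, the sets $C_n=\{c\in C:\Phi(c)\cap X_n\neq\emptyset\}$ are closed in $C$ and cover it, but $\Phi$ need not map any $C_n$ into $X_n$. My approach would be to work in the open domain where $\Phi$ is ``trapped'' in one piece: for each $n$ let $O_n$ be the interior of $\{c:\Phi(c)\subset X_n\}$; using minimality (Lemma~\ref{l:musco}) and the closedness of the $X_n$ I would show that $\bigcup_n O_n$ is dense in $C$, since wherever $\Phi(V)$ meets $X_n$ one can shrink $V$ so that $\Phi$ is forced into $X_n$. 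On each $O_n$, closure under open subspaces gives $O_n\in\C$, Lemma~\ref{l:musco-rest} gives minimality of $\Phi|O_n:O_n\multimap X_n$, and the Stegall property of $X_n$ gives a dense (comeager) single-valuedness set; a final Baire-category argument over a maximal disjoint refinement of $\{O_n\}$ assembles these into the required non-empty (comeager) set in $C$. The delicate point throughout is confirming that single-valuedness computed in the subspace or in the piece $X_n$ genuinely implies single-valuedness of the original $\Phi$, which is where minimality of $\Phi$ is indispensable.
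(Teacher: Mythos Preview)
Your approach is essentially correct, and for item (5) it coincides with the paper's own argument; the paper in fact defers items (1)--(4) to a modification of Theorem~3.1.5 in Fabian's monograph and only spells out (5), proceeding exactly as you do via Lemma~\ref{l:musco} and Lemma~\ref{l:musco-rest}.

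One imprecision worth flagging is in your treatment of (2). Lemma~\ref{l:musco} cannot be invoked with the closed set $X_n$ in the role of $W$, since that characterization requires $W$ to be open; so the sentence ``wherever $\Phi(V)$ meets $X_n$ one can shrink $V$ so that $\Phi$ is forced into $X_n$'' does not follow from that lemma. The argument you actually need is the one implicit in the ingredients you list: the closed sets $C_n=\{c\in C:\Phi(c)\cap X_n\neq\emptyset\}$ cover the Baire space $C$, so inside any nonempty open $V\subset C$ some $C_n$ has nonempty interior $V'$; on $V'$ the assignment $c\mapsto\Phi(c)\cap X_n$ is a (nonempty-valued) usco submap of the minimal usco $\Phi|V'$, hence equals it by the \emph{definition} of minimality, giving $\Phi(V')\subset X_n$ and thus $V'\subset O_n$. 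From there your pasting argument goes through. A similar remark applies to (3): once you observe that $f\circ\Psi'$ is usco and contained in $\Phi$, the equality $f\circ\Psi'=\Phi$ is immediate from the definition of minimality, so no appeal to Lemma~\ref{l:musco} is needed there either.
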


\begin{proof} The first four statements can be proved by a suitable modification of the proof of Theorem 3.1.5 in \cite{Fab}. To prove the last statement, assume that
each non-empty subspace $Y\subset X$ contains a non-empty relatively open subspace $Z\subset Y$ which is (strong) $\C$-Stegall. To prove that $X$ is (strong) $\C$-Stegall, fix a minimal usco map $\Phi:C\multimap X$ defined on a non-empty space $C\in\C$. We need to show that the set $Z=\{z\in Z:|\Phi(z)|=1\}$ is non-empty (and comeager) in $Z$. This will follow as soon as for every non-empty open set $U\subset Z$ we find a non-empty open set $V\subset U$ such that $V\cap Z$ is non-empty (and comeager) in $V$. By our assumption, the set $\Phi(U)$ contains a non-empty relatively open subspace $W$, which is (strongly) $\C$-Stegall. By Lemma~\ref{l:musco}, the set $U$ contains a non-empty open set $V$ with $\Phi(V)\subset W$. By Lemma~\ref{l:musco}, the restriction $\Phi|V:V\to W\subset X$ is a minimal usco map. Since the  space $W$ is (strong) $\C$-Stegall, the set $\{z\in V:|\Phi(z)|=1\}=V\cap Z$ is non-empty (and comeager) in $V$.
\end{proof}

Since the countable intersection of comeager subsets is comeager, Definition~\ref{d:Stegall} implies the following (almost trivial) proposition.

\begin{proposition}\label{p:cprod} Let $\C$ be a class of Baire spaces. The class of strong $\C$-Stegall spaces is closed under countable products.
\end{proposition}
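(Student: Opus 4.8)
The plan is to handle a countable product $X=\prod_{k\in\w}X_k$ of strong $\C$-Stegall spaces by projecting any given minimal usco map down to each coordinate factor and then combining the resulting comeager sets. So I would fix a minimal usco map $\Phi:C\multimap X$ defined on a non-empty space $C\in\C$, let $\pr_k:X\to X_k$ denote the coordinate projections, and for each $k\in\w$ consider the composition $\Phi_k:=\pr_k\circ\Phi:C\multimap X_k$, given by $\Phi_k(c)=\pr_k(\Phi(c))$. Note that no closure hypotheses on $\C$ are needed, since each $\Phi_k$ will be defined on the \emph{same} space $C\in\C$.

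The key step — and the only one requiring genuine care — is to check that each $\Phi_k$ is again a \emph{minimal} usco map. That $\Phi_k$ is usco is routine: $\Phi_k(c)$ is the continuous image of the non-empty compact set $\Phi(c)$, and for open $U\subset X_k$ we have $\{c:\Phi_k(c)\subset U\}=\{c:\Phi(c)\subset\pr_k^{-1}(U)\}$, which is open by the upper semicontinuity of $\Phi$. For minimality I would appeal to the characterization in Lemma~\ref{l:musco}: given open sets $V\subset C$ and $W\subset X_k$ with $W\cap\Phi_k(V)\ne\emptyset$, the open set $\pr_k^{-1}(W)\subset X$ meets $\Phi(V)$, so the minimality of $\Phi$ furnishes a non-empty open $U\subset V$ with $\Phi(U)\subset\pr_k^{-1}(W)$, whence $\Phi_k(U)=\pr_k(\Phi(U))\subset W$. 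By Lemma~\ref{l:musco} this is precisely the minimality of $\Phi_k$. This is the heart of the argument; everything afterwards is formal.

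Since each $X_k$ is strong $\C$-Stegall and $\Phi_k:C\multimap X_k$ is minimal usco on $C\in\C$, the map $\Phi_k$ is single-valued at all points of some comeager set $G_k\subset C$. As a countable intersection of comeager sets is comeager, the set $G=\bigcap_{k\in\w}G_k$ is comeager in $C$. It then remains to observe that $\Phi$ is single-valued on $G$: for $c\in G$ each projection $\pr_k(\Phi(c))=\Phi_k(c)$ is a singleton $\{x_k\}$, and since $\Phi(c)\subset\prod_{k\in\w}\pr_k(\Phi(c))=\{(x_k)_{k\in\w}\}$, the non-empty set $\Phi(c)$ reduces to the single point $(x_k)_{k\in\w}$. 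Thus $\Phi$ is single-valued on the comeager set $G\subset C$, which shows that $X$ is strong $\C$-Stegall and completes the plan.
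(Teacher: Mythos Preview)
Your argument is exactly the expansion of the paper's one-line proof (the paper merely says the result follows from Definition~\ref{d:Stegall} and the preservation of comeager sets under countable intersections); the projection-to-coordinates strategy and the minimality check via Lemma~\ref{l:musco} are precisely the details the author leaves implicit. The only quibble is that the ``if'' direction of Lemma~\ref{l:musco}, which you invoke to certify that each $\Phi_k$ is minimal, is stated in the paper only for Hausdorff targets, so strictly speaking your argument assumes the factors $X_k$ are Hausdorff---a hypothesis the proposition does not list but which the paper tacitly takes for granted throughout this discussion.
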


A lest trivial proposition easily follows from Proposition~\ref{p:Sgood} and Lemma~\ref{l:musco-rest}.

\begin{proposition}\label{p:prods} Let $\C$ be a class of Baire spaces, closed under taking open subspaces and dense $G_\delta$-subsets. For any Hausdorff $\C$-Stegall space $X$ and any strong $\C$-Stegall space $Y$ the product $X\times Y$ is $\C$-Stegall.
\end{proposition}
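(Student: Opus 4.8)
The plan is to analyse $\Phi$ through its two coordinate projections and to exploit the asymmetry of the hypotheses: the \emph{strong} Stegall property of $Y$ will be used to produce a large (dense $G_\delta$) set of good points, while only the ordinary Stegall property of $X$ is needed to locate a single good point inside that set. Concretely, let $\Phi:C\multimap X\times Y$ be a minimal usco map with $C\in\C$ non-empty, and write $\pr_X:X\times Y\to X$, $\pr_Y:X\times Y\to Y$ for the projections. First I would record the elementary fact that $\Phi_X:=\pr_X\circ\Phi$ and $\Phi_Y:=\pr_Y\circ\Phi$ are again minimal usco: upper semicontinuity and compactness of the values are immediate from continuity of the projections, and minimality follows from Lemma~\ref{l:musco}, since $W\cap\Phi_X(V)\ne\emptyset$ is equivalent to $(W\times Y)\cap\Phi(V)\ne\emptyset$, whence a non-empty open $U\subset V$ with $\Phi(U)\subset W\times Y$, i.e.\ $\Phi_X(U)\subset W$, is supplied by the minimality of $\Phi$ (symmetrically for $\Phi_Y$). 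The observation that makes the coordinatewise reduction work is that $\Phi$ is single-valued at a point $z$ if and only if both $\Phi_X$ and $\Phi_Y$ are single-valued at $z$: indeed $\Phi(z)\subset\pr_X(\Phi(z))\times\pr_Y(\Phi(z))=\Phi_X(z)\times\Phi_Y(z)$, so singletons in both coordinates force $\Phi(z)$ to be a singleton.

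Next I would deploy the strong Stegall property of $Y$. Since $C\in\C$ and $Y$ is strong $\C$-Stegall, the minimal usco map $\Phi_Y$ is single-valued at all points of some comeager set $T\subset C$ (Definition~\ref{d:Stegall}). As $C$ is Baire, $T$ contains a dense $G_\delta$-subset $G=\bigcap_{n\in\w}(C\setminus\overline{N_n})$, where $C\setminus T\subset\bigcup_{n\in\w}N_n$ with each $N_n$ nowhere dense; each $C\setminus\overline{N_n}$ is open and dense, so $G$ is a dense $G_\delta$ by the Baire property. Because $\C$ is closed under dense $G_\delta$-subsets, we have $G\in\C$, and $G$ is dense in $C$.

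I would then restrict to $G$. By Lemma~\ref{l:musco-rest}, the restriction $\Phi_X|G:G\multimap X$ of the minimal usco map $\Phi_X$ to the dense subspace $G$ is again minimal usco, and $(\Phi_X|G)(z)=\Phi_X(z)$ for all $z\in G$. Since $G\in\C$ is non-empty and $X$ is $\C$-Stegall (this is exactly the point where Proposition~\ref{p:Sgood} may be invoked), the map $\Phi_X|G$ is single-valued at some point $z\in G$; equivalently, $\Phi_X$ is single-valued at $z$. But $z\in G\subset T$, so $\Phi_Y$ is single-valued at $z$ as well. By the coordinatewise criterion above, $\Phi$ is single-valued at $z$, which is precisely what is required for $X\times Y$ to be $\C$-Stegall.

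The part demanding the most care is the passage in the second paragraph: the strong Stegall property yields only a comeager \emph{set}, and to feed it back into the weaker Stegall property of $X$ one must first thin it to a dense $G_\delta$-subspace that genuinely belongs to $\C$ (this is where closure of $\C$ under dense $G_\delta$-subsets, together with $C$ being Baire, is essential) and then verify that restriction to this subspace preserves minimality (Lemma~\ref{l:musco-rest} in its dense-subspace form, which is where the Hausdorffness of $X$ enters). It is exactly this two-speed mechanism — comeager largeness extracted from $Y$, a single point extracted from $X$ — that accounts for the asymmetry in the hypotheses.
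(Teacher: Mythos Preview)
Your proof is correct and follows essentially the route the paper indicates (the paper itself gives no detailed argument, only the pointer ``easily follows from Proposition~\ref{p:Sgood} and Lemma~\ref{l:musco-rest}''). Your two-step mechanism --- first invoke the strong $\C$-Stegall property of $Y$ on $\Phi_Y$ to obtain a comeager set, thin it to a dense $G_\delta$-set $G\in\C$, then restrict $\Phi_X$ to $G$ via Lemma~\ref{l:musco-rest} and apply the $\C$-Stegall property of $X$ --- is exactly what those two references encode, and your coordinatewise singleton criterion cleanly finishes the argument.

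One small remark: when you assert that $\Phi_Y$ is minimal usco ``symmetrically'', you are implicitly using the ``if'' direction of Lemma~\ref{l:musco}, which requires the target to be Hausdorff; the proposition hypothesises this only for $X$, not for $Y$. The paper glosses over the same point (compare Proposition~\ref{p:cprod}, which also tacitly needs the coordinate projections to be minimal usco), so this is not a divergence from the paper's intended argument but rather a shared technicality worth being aware of.
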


  We recall that a topological space is called {\em weak Stegall} if it is $\C$-Stegall for the class of completely metrizable spaces. The following example was constructed by Kalenda in \cite{K97}.

\begin{example}[Kalenda]\label{e:K}
There exists a weak Stegall compact Hausdorff space $X$ whose square is not weak Stegall.
\end{example}

\section{Stability properties of the classes of (strong) $\C$-Piotrowski spaces}\label{s:stabP}

In this section we establish some stability properties of the class of (weak) Piotrowski spaces.

\begin{theorem} Let $\C$ be a class of Baire spaces, closed under taking open subspaces.
A topological space $X$ is (strong) $\C$-Piotrowski if one of the following conditions is satisfied:
\begin{enumerate}
\item $X$ is a subspace of a (strong) $\C$-Piotrowski space;
\item $X$ is the countable union of closed (strong) $\C$-Piotrowski subspaces of $X$;
\item each non-empty subspace $Y\subset X$ contains a non-empty relatively open subspace $Z\subset Y$ which is (strong) $\C$-Piotrowski.
\end{enumerate}
\end{theorem}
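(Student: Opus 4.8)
The plan is to handle all three cases through a single reduction. Since $\C$ is closed under open subspaces, for every non-empty open $V\subset C$ the restriction $f|V:V\to X$ is again quasicontinuous with $V\in\C$, and because $V$ is open the continuity points of $f|V$ regarded as a map into any subspace $S\subset X$ with $f(V)\subset S$ coincide with $C(f)\cap V$. Hence the whole problem reduces to producing a non-empty open $V\subset C$ together with a (strong) $\C$-Piotrowski subspace $S\subset X$ satisfying $f(V)\subset S$: Definition~\ref{d:piotr} applied to $f|V:V\to S$ then gives that $C(f)\cap V$ is non-empty (resp. comeager in $V$). For the \emph{strong} conclusion I would pass from this local information to the global statement ``$C(f)$ is comeager in $C$'' by a maximal disjoint family $\mathcal V$ of open sets on which $C(f)$ is comeager: maximality together with the local statement forces $\bigcup\mathcal V$ to be dense, $C(f)$ stays comeager on the open set $\bigcup\mathcal V$ since comeagerness transfers across a disjoint family of open pieces, and the remainder $C\setminus\bigcup\mathcal V$ is nowhere dense; the Baire property of $C$ is what makes this work.

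Case (1) is immediate: viewing $f$ as a quasicontinuous map into the ambient (strong) $\C$-Piotrowski space $\tilde X\supset X$, its continuity points there coincide with its continuity points into $X$ (all values lie in $X$), so the hypothesis on $\tilde X$ supplies the desired (comeager) set directly. Case (3) would mirror the argument for statement (5) of the corresponding proposition for $\C$-Stegall spaces: given a non-empty open $U\subset C$, apply the hypothesis to $Y=f(U)$ to obtain a non-empty relatively open $S=Y\cap G\subset Y$ (with $G$ open in $X$) that is (strong) $\C$-Piotrowski; choosing $u\in U$ with $f(u)\in S$ and using quasicontinuity of $f$ at $u$ for the neighbourhoods $U\ni u$ and $G\ni f(u)$ yields a non-empty open $V\subset U$ with $f(V)\subset G$, whence $f(V)\subset f(U)\cap G=S$. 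This is exactly the local input the reduction needs, giving a continuity point for the non-strong conclusion and, via the maximal family, comeagerness for the strong one.

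The hard part will be Case (2), where a genuinely new observation is required: for a quasicontinuous map the preimage of an open set is semi-open, i.e. $f^{-1}(G)\subset\overline{\mathrm{int}(f^{-1}(G))}$ whenever $G\subset X$ is open. Writing $X=\bigcup_{n\in\w}X_n$ with each $X_n$ closed and (strong) $\C$-Piotrowski, set $C_n=f^{-1}(X_n)$ and $D_n=f^{-1}(X\setminus X_n)=C\setminus C_n$. Then each $D_n$ is semi-open while $\bigcap_{n\in\w}D_n=C\setminus\bigcup_{n\in\w}C_n=\emptyset$. I claim $O=\bigcup_{n\in\w}\mathrm{int}(C_n)$ is dense in $C$: if a non-empty open $W\subset C$ missed $O$, then each $C_n\cap W$ would have empty interior in the open (hence Baire) subspace $W$, so each $D_n\cap W$ would be dense in $W$ and, by semi-openness of $D_n$, $\mathrm{int}(D_n)\cap W$ would be dense open in $W$; Baireness of $W$ would then make $\bigcap_{n\in\w}\mathrm{int}(D_n)\cap W\subset\bigcap_{n\in\w}D_n=\emptyset$ non-empty, a contradiction. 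Granting density of $O$, any non-empty $\mathrm{int}(C_n)$ gives $V=\mathrm{int}(C_n)$ with $f(V)\subset X_n$, so the reduction produces a continuity point; for the strong version, since the sets $\mathrm{int}(C_n)$ are countably many open sets covering the dense open $O$ and $C(f)\cap\mathrm{int}(C_n)$ is comeager in each $\mathrm{int}(C_n)$, the countable identity $O\setminus C(f)=\bigcup_{n\in\w}(\mathrm{int}(C_n)\setminus C(f))$ exhibits $O\setminus C(f)$ as a countable union of meager sets, so $C(f)$ is comeager in $O$ and hence in $C$.

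The two steps I expect to demand the most care are the semi-openness reduction underlying the density of $O$ in Case (2), and the local-to-global passage for the strong conclusions, where I must justify that comeagerness is preserved under the (possibly uncountable) disjoint family of open pieces used in Cases (1) and (3). Everything else is the bookkeeping of transferring continuity points across open restrictions and across the subspaces $S\subset X$, which is routine once the reduction in the first paragraph is in place.
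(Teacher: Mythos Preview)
Your proposal is correct and follows essentially the same route as the paper. Cases (1) and (3) are handled identically; for Case (2) the paper argues directly that if $V\cap f^{-1}(X_n)$ is dense in an open $V$ then in fact $V\subset f^{-1}(X_n)$ (via a quasicontinuity contradiction), which is exactly the contrapositive of your semi-openness observation for $D_n=f^{-1}(X\setminus X_n)$, and both versions then invoke Baire in the same way. Your explicit treatment of the local-to-global step for the strong conclusions (via a maximal disjoint family, i.e.\ a Banach category argument) is something the paper simply asserts with the phrase ``it suffices in every non-empty open set $U\subset Z$ to find\dots'', so you are being more careful there, but the underlying argument is the same.
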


\begin{proof} 1. The first statement is trivial.
\smallskip

2. Assume that $X$  can be written as the countable union $X=\bigcup_{n\in\w}X_n$  of closed (strong) $\C$-Piotrowski subspaces $X_n$ of $X$.

To show that $X$ is (strongly) $\C$-Piotrowski, take any quasicontinuous function $f:Z\to X$, defined on a non-empty Baire space $Z\in\C$. We shall prove that the set $C(f)$ of continuity points of $f$ is dense (and comeager) in $Z$.
It suffices in every non-empty open set $U\subset Z$ to find a non-empty open set $V\subset U$ such that the intersection $V\cap C(f)=C(f|V)$ is dense (and comeager) in $V$.

Since the space  $U=\bigcup_{n\in\w}U\cap f^{-1}(X_n)$ is Baire, for some $n\in\w$ the set $U\cap f^{-1}(X_n)$ is not meager in $U$. Then there exists a non-empty open set $V\subset U$ such that $V\cap f^{-1}(X_n)$ is dense in $V$. We claim $V\subset f^{-1}(X_n)$. Assuming that $V\not\subset f^{-1}(X_n)$, find a point $v\in V\setminus f^{-1}(X_n)$. Since $f(v)\in X\setminus X_n$, we can use the quasicontinuity of $f$ and find a non-empty open set $W\subset V$ such that $f(W)\subset X\setminus X_n$. Since $V\cap f^{-1}(X_n)$ is dense in $V$, there is a point $z\in W\cap f^{-1}(X_n)$. For this point we get $f(z)\in f(W)\cap X_n\subset (X\setminus X_n)\cap X_n=\emptyset$, which is a desired contradiction witnessing that $V\subset f^{-1}(X_n)$. Since the space $X_n$ is (strong) $\C$-Piotrowski, the set $C(f|V)=C(f)\cap V$ is dense (and comeager) in $V$.
\smallskip

3. Assume that every non-empty subspace $A\subset X$ contains a non-empty relatively open subspace, which is (strong) $\C$-Piotrowski.
 To show that $X$ is (strong) $\C$-Piotrowski, take any quasicontinuous function $f:Z\to X$, defined on a non-empty Baire space $Z\in\C$. We shall prove that the set $C(f)$ of continuity points of $X$ is dense (and comeager) in $Z$.
It suffices in every non-empty open set $U\subset Z$ to find a non-empty open set $V\subset U$ such that the intersection $V\cap C(f)=C(f|V)$ is dense (and comeager) in $V$. By our assumption, the image $f(U)$ contains a non-empty relatively open (strongly) $\C$-Piotrowski subspace $W\subset f(U)$. The quasicontinuity of $f$ yields a non-empty open subset $V\subset U$ such that $f(V)\subset W$. Since the space $W$ is (strong) $\C$-Piotrowski, the set $C(f|V)=V\cap C(f)$ is dense (and comeager) in $V$.
\end{proof}

 \begin{proposition} Assume that a class $\C$ of Baire spaces is closed under taking open subspaces and dense Baire subspaces. A regular topological space $X$ is (strong) $\C$-Piotrowski if $X$ can be written as the countable union $X=\bigcup_{n\in\w}X_n$ of $\C$-Piotrowski subspaces of $X$.
 \end{proposition}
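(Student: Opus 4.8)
The plan is to prove that $X$ is $\C$-Piotrowski; since $X$ is regular and $\C$ is closed under open subspaces and dense Baire subspaces, Corollary~\ref{c:P<=>sP} then upgrades this to \emph{strong} $\C$-Piotrowski for free. So I fix a quasicontinuous map $f:Z\to X$ with $Z\in\C$ non-empty and look for a single point of $C(f)$. Two elementary properties of quasicontinuity into the regular space $X$ drive the argument. First, if $f^{-1}(X_n)\cap V$ is dense in an open set $V\subset Z$, then $f(V)\subset\overline{X_n}$: otherwise some value $f(v)$ would lie in the open set $X\setminus\overline{X_n}$, and quasicontinuity would yield a non-empty open $U\subset V$ with $f(U)\subset X\setminus\overline{X_n}$, contradicting the density of $f^{-1}(X_n)$ in $V$ (this is the mechanism already used in Lemma~\ref{l:Prest}). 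Second, for every open $W\subset X$ the set $f^{-1}(W)$ is contained in the closure of its interior; hence $f^{-1}(W)$ is either empty or has non-empty interior.

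The key construction peels $X$ along its pieces. For each $n$ set $W_n:=X\setminus\overline{\bigcup_{m\ne n}X_m}$; this is open in $X$ and, because $X=\bigcup_m X_m$, satisfies $W_n\subset X_n$. Consequently $f$ maps the open set $O_n:=\mathrm{int}\,f^{-1}(W_n)$ into $X_n$. If $O_n\ne\emptyset$ for some $n$, then $O_n\in\C$ (closedness of $\C$ under open subspaces), the restriction $f|O_n:O_n\to X_n$ is quasicontinuous into the $\C$-Piotrowski space $X_n$, and its continuity point is a continuity point of $f$ --- and I am done. Otherwise the second property forces $f^{-1}(W_n)=\emptyset$ for every $n$, i.e. $f(Z)\subset F:=\bigcap_n\overline{\bigcup_{m\ne n}X_m}$, a closed and (generically) proper subspace of $X$. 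Since $F=\bigcup_n(F\cap X_n)$ is again a countable union of $\C$-Piotrowski subspaces (each $F\cap X_n$ being a subspace of $X_n$), I repeat the construction with $f$ regarded as a map into $F$. Iterating --- taking intersections at limit stages --- produces a decreasing transfinite chain of closed subspaces $X=F_0\supset F_1\supset\cdots$ with $f(Z)\subset F_\alpha$ throughout; being decreasing, it stabilizes at some closed $F_*\supset f(Z)$ for which $\bigcup_{m\ne n}(F_*\cap X_m)$ is dense in $F_*$ for every $n$ (equivalently, all the sets $W_n$ computed inside $F_*$ are empty).

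The hard part, where I expect the real work to lie, is exactly this stable case $X=F_*$: here no piece $X_n$ occupies an open part of $X$ disjoint from the closures of the others, so the peeling trick yields no open set on which $f$ lands in a single piece, and one cannot simply pass to a dense $G_\delta$ inside some $f^{-1}(X_n)$ because these preimages need not have the Baire property. To produce a continuity point in this case I would fix a non-empty open $U\subset Z$, choose (using that $Z$ is Baire) a non-empty open $V\subset U$ in which some $f^{-1}(X_n)$ is dense, so that $f(V)\subset\overline{X_n}$, and then run a maximal-disjoint-family (Zorn) construction in the spirit of the proofs of Theorems~\ref{t:Baire} and~\ref{t:Choquet}: using regularity of $X$ and the $\C$-Piotrowski property of the dense pieces meeting $\overline{X_n}$, I would carve out a dense $G_\delta$ subset $G\subset V$ whose image lies in a single piece. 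By closedness of $\C$ under dense Baire subspaces, $G\in\C$, so that piece supplies a continuity point of $f|G$, which by the density mechanism of Lemma~\ref{l:Prest} is a continuity point of $f$. The two closure hypotheses on $\C$ are used precisely here and in the peeling: closedness under open subspaces keeps $f|V$ in $\C$, and closedness under dense Baire subspaces lets me feed $G$ into a piece. Once a continuity point is secured in every case, $X$ is $\C$-Piotrowski, and the strong conclusion follows from Corollary~\ref{c:P<=>sP}.
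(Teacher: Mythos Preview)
Your argument has a genuine gap in the ``hard case'', and the transfinite peeling that precedes it is unnecessary. The paper's proof is a one-paragraph argument that handles everything at once, and the key step is precisely the one you dismiss.

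You correctly observe that, since $Z=\bigcup_{n}f^{-1}(X_n)$ is Baire, some $f^{-1}(X_n)$ is non-meager and hence dense in some non-empty open $V\subset Z$. You then conclude $f(V)\subset\overline{X_n}$ and try to work inside $\overline{X_n}$, which is not assumed $\C$-Piotrowski; this forces you into a vague Zorn construction you never carry out. The move you are missing is that non-meagerness gives more than density: one can choose $V$ so that $B:=V\cap f^{-1}(X_n)$ is a \emph{dense Baire} subspace of $V$ (take $V$ so that $f^{-1}(X_n)$ is non-meager in every non-empty open subset of $V$; then any subset of $B$ meager in $B$ is meager in $V$, so $B$ is Baire). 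Since $\C$ is closed under open subspaces and under dense Baire subspaces, $B\in\C$. Now $f|B:B\to X_n$ is quasicontinuous into the $\C$-Piotrowski space $X_n$ itself --- not its closure --- so it has a continuity point $z$, and Lemma~\ref{l:Prest} (using regularity of $X$) lifts $z$ to a continuity point of $f$. Corollary~\ref{c:P<=>sP} then gives the strong conclusion, exactly as you say.

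Your worry that ``these preimages need not have the Baire property'' is the crux of the misunderstanding: they need not be $G_\delta$, but the hypothesis on $\C$ is closure under dense \emph{Baire} subspaces, not dense $G_\delta$-sets, and that is exactly what is available here. Once you see this, the peeling construction, the transfinite chain, and the stable case all evaporate.
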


\begin{proof} To show that $X$ is $\C$-Piotrowski, take any
quasicontinuous function $f:Z\to X$, defined on a non-empty Baire space $Z\in\C$. Since the space $Z=\bigcup_{n\in\w}f^{-1}(X_n)$ is Baire, for some $n\in\w$ the set $f^{-1}(X_n)$ is not meager in $Z$. Consequently, there exists a non-empty open set $V\subset Z$ such that the intersection $B=V\cap f^{-1}(X_n)$ is a dense Baire subspace of $V$.
The quasicontinuity of the function $f$ implies the quasicontinuity of the restriction $f|B:B\to X_n$. Taking into account that the class $\C$ is closed under taking open subspaces and dense Baire subspaces, we conclude that $B\in\C$. Since the space $X_n$ is $\C$-Piotrowski, the function $f|B$ has a continuity point $z\in B$. Lemma~\ref{l:Prest} implies that $z$ remains a continuity point of the map $f|V$ and $f$. This witnesses that the space $X$ is $\C$-Piotrowski. By Corollary~\ref{c:P<=>sP}, $X$ is strongly $\C$-Piotrowski.
\end{proof}

The following two propositions are counterparts of Propositions~\ref{p:cprod}, \ref{p:prods}. The first of them follows from Definition~\ref{d:piotr} and the preservation of comeager sets by countable intersections. 

\begin{proposition} Let $\C$ be a class of Baire spaces. The class of strong $\C$-Piotrowski spaces is closed under countable products.
\end{proposition}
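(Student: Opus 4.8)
The plan is to reduce the continuity of a map into a product to the simultaneous continuity of all its coordinate maps, and then to apply the strong $\C$-Piotrowski hypothesis in each coordinate. So let $(X_k)_{k\in\w}$ be a sequence of strong $\C$-Piotrowski spaces, put $X=\prod_{k\in\w}X_k$, and fix an arbitrary quasicontinuous map $f:C\to X$ defined on a space $C\in\C$. I must show that the set $C(f)$ of continuity points of $f$ is comeager in $C$.

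First I would observe that for each $k\in\w$ the coordinate map $f_k:=\pr_k\circ f:C\to X_k$ is again quasicontinuous, where $\pr_k:X\to X_k$ is the projection. This rests on the routine fact that the composition of a quasicontinuous map with a continuous map is quasicontinuous: given neighborhoods $O_z$ of a point $z$ and $O_{f_k(z)}$ of $f_k(z)$, the preimage $\pr_k^{-1}(O_{f_k(z)})$ is an open neighborhood of $f(z)$, so the quasicontinuity of $f$ at $z$ yields a non-empty open set $U\subset O_z$ with $f(U)\subset\pr_k^{-1}(O_{f_k(z)})$, whence $f_k(U)\subset O_{f_k(z)}$. Since each $X_k$ is strong $\C$-Piotrowski and $C\in\C$, the set $C(f_k)$ is comeager in $C$.

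The one genuinely load-bearing step is the identity $C(f)=\bigcap_{k\in\w}C(f_k)$, which is precisely the pointwise form of the universal property of the product topology. The inclusion $C(f)\subset\bigcap_k C(f_k)$ is immediate, since each $f_k=\pr_k\circ f$ is a composition with a continuous projection. For the reverse inclusion I would fix $z\in\bigcap_k C(f_k)$ together with a basic open neighborhood $\prod_{k\in\w}V_k$ of $f(z)$, where $V_k=X_k$ for all $k$ outside some finite set $F\subset\w$; the continuity of $f_k$ at $z$ for each $k\in F$ supplies a neighborhood $U_k$ of $z$ with $f_k(U_k)\subset V_k$, and then the finite intersection $U=\bigcap_{k\in F}U_k$ is a neighborhood of $z$ with $f(U)\subset\prod_{k\in\w}V_k$, so $f$ is continuous at $z$.

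Finally I would conclude by invoking the preservation of comeagerness under countable intersections: $\bigcap_{k\in\w}C(f_k)$ is a countable intersection of comeager subsets of $C$, hence its complement $\bigcup_{k\in\w}(C\setminus C(f_k))$ is a countable union of meager sets and therefore meager. By the displayed identity, $C(f)$ is comeager in $C$, so $X$ is strong $\C$-Piotrowski. I do not expect any real obstacle: the only points needing care are the elementary check that each coordinate map remains quasicontinuous and the standard product-topology argument establishing $C(f)=\bigcap_k C(f_k)$, after which the result is immediate from the stability of comeager sets under countable intersection.
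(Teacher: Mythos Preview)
Your proof is correct and is exactly the argument the paper has in mind: the paper merely states that the proposition ``follows from Definition~\ref{d:piotr} and the preservation of comeager sets by countable intersections,'' and your write-up supplies the routine details (quasicontinuity of the coordinate maps and the identity $C(f)=\bigcap_k C(f_k)$) that make this work.
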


The following proposition follows from Definition~\ref{d:piotr} and Proposition~\ref{p:Pgood}(3).

\begin{proposition} Let $\C$ be a class of Baire spaces, closed under taking open subspaces and dense $G_\delta$-sets. For a regular $\C$-Piotrowski space $X$ and a strong $\C$-Piotrowski space $Y$ the product $X\times Y$ is $\C$-Piotrowski.
\end{proposition}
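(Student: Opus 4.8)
The plan is to split $f$ into its two coordinates and to combine the two Piotrowski hypotheses through a Baire category argument carried out on the domain. First I would fix an arbitrary quasicontinuous map $f\colon C\to X\times Y$ defined on a non-empty space $C\in\C$ and set $g=\pr_X\circ f\colon C\to X$ and $h=\pr_Y\circ f\colon C\to Y$. Since a composition of a quasicontinuous map with a continuous map is quasicontinuous, both $g$ and $h$ are quasicontinuous, and since a map into a product is continuous at a point precisely when both of its coordinates are, we have $C(f)=C(g)\cap C(h)$. Hence it suffices to show that $C(g)\cap C(h)$ is non-empty.

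Next I would invoke the two hypotheses, one for each coordinate. As $X$ is a regular $\C$-Piotrowski space and $\C$ is closed under open subspaces and dense $G_\delta$-subsets, Proposition~\ref{p:Pgood}(3) applies to $g$ and yields that $C(g)$ is a dense Baire subspace of $C$. As $Y$ is strong $\C$-Piotrowski and $C\in\C$, Definition~\ref{d:piotr} gives directly that $C(h)$ is comeager in $C$.

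The final step is a category argument inside the Baire space $C(g)$. Writing the meager set $C\setminus C(h)$ as $\bigcup_{n\in\w}N_n$ with each $N_n$ nowhere dense in $C$, the density of $C(g)$ in $C$ forces each trace $N_n\cap C(g)$ to be nowhere dense in $C(g)$; therefore $C(g)\setminus C(h)\subset\bigcup_{n\in\w}(N_n\cap C(g))$ is meager in $C(g)$, so that $C(g)\cap C(h)$ is comeager in $C(g)$. Since $C(g)$ is a non-empty Baire space, a comeager subset of it is dense, hence non-empty, and we conclude that $C(f)=C(g)\cap C(h)\ne\emptyset$, i.e. $X\times Y$ is $\C$-Piotrowski.

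The only non-formal point is the claim that the trace of a nowhere dense subset of $C$ on the dense subspace $C(g)$ remains nowhere dense; this is exactly what transports the comeagerness of $C(h)$ from $C$ down to $C(g)$, and I expect it to be the main (albeit routine) obstacle. It is worth recording where the hypotheses enter: the regularity of $X$ is precisely what licenses the use of Proposition~\ref{p:Pgood}(3), while the stronger assumption on $Y$---giving a comeager, not merely dense, continuity set---is what compensates for the fact that $C(g)$ is known only to be a dense Baire subspace and not necessarily a member of $\C$.
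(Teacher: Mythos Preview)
Your argument is correct and matches the paper's own reasoning, which simply cites Definition~\ref{d:piotr} and Proposition~\ref{p:Pgood}(3) without spelling out the details you supplied. The decomposition $C(f)=C(g)\cap C(h)$, the use of Proposition~\ref{p:Pgood}(3) to make $C(g)$ a dense Baire subspace, and the category argument transporting the comeagerness of $C(h)$ down to $C(g)$ are exactly the intended steps.
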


Our last example follows from Theorem~\ref{t:P<=>S} and Example~\ref{e:K}.

\begin{example} There exists a weak Piotrowski compact Hausdorff space $X$ whose square is not weakly Piotrowski.
\end{example}

\end{document}